\documentclass[
10pt,
letterpaper,
oneside,
headinclude,footinclude, 
BCOR=5mm, 
]{article}

\usepackage{mathptmx}
\usepackage{helvet}
\usepackage{courier}
\usepackage{type1cm}         

\usepackage{makeidx}         
\usepackage{graphicx}        
\usepackage[bottom]{footmisc}

\usepackage{dcolumn}
\newcolumntype{d}[1]{D{.}{.}{#1}}
\setlength{\tabcolsep}{2pt}

\usepackage{amsthm}
\usepackage{mathrsfs}
\usepackage{amsmath}
\usepackage{amssymb}
\usepackage{newtxtext}       %
\usepackage{newtxmath}       
\usepackage{epigraph}
\usepackage[LGR,T1]{fontenc}
\usepackage{accents}
\usepackage{bbding}
\usepackage{lscape}

\usepackage{hyperref}

\title{\normalfont{Post-Einsteinian Effects in the General Theory of Relativity from Higher-Order Riemannian Geometry}} 

\author{{William Edward Bies
} \\ {E-mail: william.bies.phd@gmail.com
}}

\date{June 6, 2024} 

\begin{document}
	
	\numberwithin{equation}{section}
	\theoremstyle{plain}
	\newtheorem{theorem}{Theorem}[section]
	\newtheorem{proposition}{Proposition}[section]
	\newtheorem{lemma}{Lemma}[section]
	\newtheorem{corollary}{Corollary}[section]
	\newtheorem{postulate}{Postulate}[section]
	
	\theoremstyle{definition}
	\newtheorem{definition}{Definition}[section]
	
	\theoremstyle{remark}
	\newtheorem{remark}{Remark}[section]
	
	\renewcommand{\qedsymbol}{\small \RectangleBold}
	
	\maketitle
	
	\setcounter{tocdepth}{2} 
	
	\tableofcontents
	
	\newpage
	
	\section*{Abstract}
	
In Part I of this series, the author has shown how to extend the framework of Riemannian geometry so as to include infinitesimals of higher than first order. The purpose of the present contribution is to initiate an investigation into the implications of higher-order differential geometry for the general theory of relativity. As we have seen, a novel concept of inertial motion is implied by the analogue of the geodesic equation when modified to include the effects of the higher infinitesimals and it therefore should not come as a surprise that it has potentially observable kinematic consequences. The route we prefer to take goes through the Einstein-Hilbert action, generalized to reflect the presence of higher infinitesimals and a cosmological constant. A variational principle yields an hierarchy of field equations, which reduce to the Einsteinian case at first order. In the weak-field limit, we recover the usual relativistic equation for a moving body to leading order. To exemplify the theoretical framework, we undertake a preliminary study of the Schwarzschild solution and Friedmann-Robertson-Walker cosmology in the presence of second-order terms. But our most exciting results concern the novel effects in orbital mechanics that arise when the higher-order corrections cannot be neglected. Indeed, the higher-order Riemannian geometry predicts a modification of Newtonian dynamics corresponding to the Pioneer anomaly for a spacecraft on a hyperbolic escape trajectory from the solar system and to the flyby anomaly for the differential between ingoing and outgoing asymptotic velocities of a spacecraft passing near a rotating planet---both of which are found to agree well with sensitive empirical findings.

	\vspace{12 pt}

2020 Mathematics Subject Classification. Primary: 53A45 Differential geometric aspects in vector and tensor analysis. Secondary: 83C05 Einstein's equations (general structure, canonical formalism, Cauchy problem); 83C10 Equations of motion in general relativity and gravitational theory; 83B05 Observational and experimental questions in relativity and gravitational theory

\vspace{12 pt}

Keywords: higher-order differential geometry, general theory of relativity, experimental tests of general relativity 

\vspace{12 pt}

Subject Classifications: Riemannian geometry, local differential geometry, analysis on manifolds

\vspace{12 pt}


	\section{Introduction}\label{chapter_1}

In Part I of the present series of papers, we propose a consistent Riemannian geometry incorporating infinitesimals to higher than first order and derive its elementary properties. Next, in Parts II and III, we take up the application of these ideas to the general theory of relativity. As ordinarily construed, the general theory of relativity concerns the problem of gravitation among bodies arranged across macroscopic distances. In this context, we may expect to see the novel inertial effects of the higher-order contributions to the jet geodesic equation arising from the Levi-Civita connection reflecting the generalized Riemannian metric to manifest themselves over sufficiently large space-time intervals. The problem, thus, becomes twofold: first, to devise a suitable form of the Einstein-Hilbert action in the current setting that will lead to field equations from which a relativistic equation of motion to higher order can be derived; and second, to investigate the observational consequences of the formalism so developed for the motion of orbiting bodies in the solar system. These will form our objective in {\S}\ref{chapter_6} and {\S}\ref{chapter_7} below, respectively, which together make up Part II. 

Yet the theory advanced in Part I is quite versatile and we are far from having exhausted its consequences for fundamental physics. For the generalization of the Riemannian curvature tensor as there defined entails a reconceptualization not only of space-time structure at large scales, but also in the domain of the infinitely small. The novel possibilities thereby suggested will become perspicuous once the geometrical situation is viewed in terms of the Cartan formalism, from which it immediately follows that the affine connection 1-forms must acquire non-trivial higher-order components, the phenomenology of which will be to give rise to novel interactions among bodies across large distances by means of the additional newly recognized field degrees of freedom. The key point to realize now is that the form assumed by these interactions will be \textit{necessarily} and \textit{uniquely} dictated by the structure of Minkowskian space-time in the infinitely small. The resulting constructive principles will provide a cogent ground for a dynamical philosophy of the fundamental forces acting in unison which is free of speculative mechanistic hypotheses. The detailed investigation of this problem will form the subject of Part III.

\subsection{Motivations from Theoretical Physics}

It is worth pointing our here how our methodology differs from that of other prominent research programs current in the literature: we adopt a dynamical rather than mechanistic philosophy as the proper basis of a science of nature (for a good entryway into the pertinent issues, see Hendry's study of Maxwell in \cite{hendry}). For our purposes, the import of such a decision amounts, namely, to this, that we prefer to depart from feigning of hypotheses in order to embrace a constructive method. The problem with the hypothetico-deductive method as generally practiced today is that it encourages eclecticism, or a random shooting into the dark when it comes to the formulation of the mechanical model from which everything else is to be derived, whereas a constructive method favors, instead, a systematic, inductive inference to the governing physical principles, whence it tends to be informed by a greater degree of connection with experiment. The failure of grand unified theories and superstring theory to yield any insight concerning the peculiar and seemingly problematical features of the standard model---the apparent contingency behind the selection of gauge groups and fermionic representations and the large number of free parameters---may be attributed to just this defect of method. A dynamical approach, on the other hand, will be characterized by the necessity and uniqueness that attend its constructions and therefore issues in \textit{testable} predictions.

Another motivation that will turn up in the course of our investigations is the following. As we shall see in due course, a dynamical philosophy of nature inspires an actional perspective as being the most efficacious for bringing about the transfer from general principles to specific mathematical formulae, along the lines of what Maxwell does for electrodynamics \cite{maxwell_treatise}. In our case, the procedure that suggests itself is to seek a reformulation of the Einstein-Hilbert action capable of incorporating the higher infinitesimals. When one pursues this possibility, it turns out by serendipity that a cosmological term can be introduced in such a way as simply to account for the dark energy that is responsible for the acceleration of the rate of expansion of the universe (for reviews, see Peebles and Ratra \cite{peebles_ratra} and Frieman \textit{et al.} \cite{frieman_turner_huterer}). It proves expedient to view dark energy as a background contribution to the generalized Riemannian curvature 2-forms in Cartan's formalism. This reconceptualization of the role of dark energy enables an auspicious reformulation of the conventional Einstein-Hilbert action in such a way as to make it conspicuous how it can embrace at the same time gravitation and interactions mediated by gauge fields of Yang-Mills type.

	\section{The General Theory of Relativity for Jets}\label{chapter_6}

As one might well expect, the circle of ideas so far put forward in Part I ought to have far-reaching applications to physics. Here in {\S}\ref{chapter_6}, we advance a reformulation of Einstein’s general theory of relativity, i.e., in indefinite signature, to include the effects of the higher infinitesimals. Key to everything that follows is a distinction between Cartesian and uniformizing coordinates when representing the Minkowskian metric in free space, the physical significance of which we discuss. Then, a field equation involving a hierarchy of sectors at higher order in the jets is obtained from a variational principle. The weak-field limit is derived and the relativistic equation of motion for a planet in the solar system is recovered in a suitable limit. We conclude with a cursory discussion of the Schwarzschild solution and cosmological solutions of Friedmann-Robertson-Walker type in order to exhibit the leading corrections due to jets of higher than first order.

\subsection{From Pure Mathematics to Theoretical Physics}

Despite its abundant successes in explaining the phenomena of gravitational physics, Einstein’s general theory of relativity \cite{einstein_grundlage} leaves a number of pressing questions unresolved. The two foremost about which one wonders are first, how does gravity relate to the other fundamental forces and second, is it possible to quantize it along lines similar to what has been done so admirably with respect to the standard model of the electroweak and strong forces? In view of the outstanding problems encountered, substantial progress on either front would seem to depend upon as-yet unknown foundational principles. In the present section, we wish to investigate how the general theory of relativity might be reformulated so as to become the physical counterpart to Riemannian geometry at higher order in the infinitesimals, as we have explored above.

To proceed from flat to curved space in {\S}\ref{general_principles} below, we have to recapitulate the steps Einstein takes when originally formulating the theory. While our procedure is analogous to Einstein's, there are still choices to be made. How to extend the Einstein-Hilbert action, how to define the generalized stress-energy tensor on the right hand side of the field equation etc.? A more basic point would be the following: the physical significance of the formal constructions changes because we now allow motion in higher tangent directions. We pursue here in {\S}\ref{chapter_7} and later in Part III the striking ramifications of the resulting coupling of different orders in the infinitesimals.

\subsection{Extension of the General Theory of Relativity to Include Infinitesimals}

After clearing several technical preliminaries out of the way, we will be prepared at last in {\S}\ref{general_principles} to enunciate the postulates that guide our extension of the general theory of relativity. The development is, in some respects, purely formal, but in others, most definitely not so, for our primary in concern in the remainder of this and following sections must be to tease out the \textit{physical} implications of the disarmingly ingenuous mathematical propositions, so easy to state given the formalism now at our disposal and yet so rich in their consequences for physics!

\subsubsection{Minkowskian Metric to Higher Order}\label{Minkowski_tensor_to_higher_order}

To follow the historical course of development, the generally covariant theory of relativity ought to be grounded in the special theory of relativity. What are we to mean by a flat space-time in the presence of infinitesimals? 

The result of proposition I.3.4 enables us to retain the Newtonian and Einsteinian picture of inertia in free space as a leading approximation over small enough scales. Euclidean coordinates can serve as a local model of space-time (where, as is conventional, we use the speed of light in vacuum, $c$, as a constant factor by which to convert time into spatial units). Posit a world in which the bodies are light and far enough apart that their gravitational interactions can be neglected. Introduce the generalized Minkowskian metric $\hat{\eta}$ as follows. Its leading part should agree with the standard form of the special theory of relativity; that is,
\begin{equation}
	\eta_{\mu\nu} = \begin{cases}
		-1 & \text{if $\mu=\nu=0$}, \cr
		\, \, \, 1 & \text{if $\mu=\nu=1,2,3$}, \cr
		\, \, \, 0 & \text{otherwise}.
	\end{cases}
\end{equation}
Therefore, in analogy with equation (II.5.88) we propose the following form:
\begin{equation}\label{generalized_minkowski_metric}
	\hat{\eta} := \sum_{\lambda=1}^\infty
	\sum_{\begin{smallmatrix}\alpha, \beta, \gamma ~\mathrm{such~that}~\alpha\ge\gamma, \beta\ge\gamma, |\gamma|=\lambda \\ \mathrm{either}~ \alpha \le \beta ~\mathrm{or}~ \beta \le \alpha \end{smallmatrix}}  
	\frac{|\alpha|!}{\alpha!}\frac{|\beta|!}{\beta!}
	(-1)^{\beta_0}
	d^\alpha \otimes d^\beta.
\end{equation}

\begin{proposition}
	The postulated form of the Minkowskian metric $\hat{\eta}$ is invariant under Lorentz transformations and, in fact, the full Poincar{\'e} group.
\end{proposition}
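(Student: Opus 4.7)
The plan is to reduce the Poincaré invariance to two separate verifications: translation invariance, which is essentially cosmetic, and Lorentz invariance, which is the real content.

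First I would dispose of translations. Because the higher-order differentials $d^\alpha$ introduced in Part I are constructed from the jet-differentiation of coordinate monomials, they depend only on coordinate \emph{increments}; under a shift $x\mapsto x+a$ each $d^\alpha$ is unchanged, the multinomial factors and the sign $(-1)^{\beta_0}$ are constants independent of position, and so $\hat{\eta}$ is translation-invariant term-by-term. This is a routine verification on the back of the jet-calculus developed in Part I and needs no combinatorial work.

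The substance of the proposition lies in Lorentz invariance. My strategy is to exhibit \eqref{generalized_minkowski_metric} as a combinatorial manifestation of iterated copies of the ordinary $\eta_{\mu\nu}$, from which Lorentz invariance follows immediately from $L^{T}\eta\, L=\eta$. Concretely, I would first derive the transformation law for $d^{\alpha}$ under $x^{\mu}\mapsto L^{\mu}{}_{\nu}x^{\nu}$ by expanding the monomial $(Lx)^{\alpha}$ via the multinomial theorem and then jet-differentiating; this yields $d^{\alpha}$ transforming linearly into a combination of $d^{\alpha'}$ with coefficients polynomial in the entries of $L$. Next, I would rewrite the inner sum in \eqref{generalized_minkowski_metric} at fixed $\gamma$ with $|\gamma|=\lambda$ and recognize that the constraints $\alpha\ge\gamma$, $\beta\ge\gamma$, the prefactors $|\alpha|!/\alpha!$ and $|\beta|!/\beta!$, the ordering condition ($\alpha\le\beta$ or $\beta\le\alpha$, preventing double counting when one multi-index is contained in the other), and the sign $(-1)^{\beta_0}$, together package precisely into $\lambda$ tensor factors of $\eta_{\mu\nu}$ glued into the appropriate higher-jet object, with the remaining $\alpha-\gamma$ and $\beta-\gamma$ indices playing the role of "free" tangent directions on which the Lorentz action simply passes through. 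Once this factorization through $\eta^{\otimes\lambda}$ is exhibited, Lorentz invariance of $\hat{\eta}$ follows term-by-term in $\lambda$ from the invariance of $\eta$ itself, and summing over $\lambda$ finishes the proof.

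The principal obstacle is the combinatorial identity underlying that factorization: one must check that the prefactors $|\alpha|!/\alpha!$ and $|\beta|!/\beta!$, cut down by the comparability condition on $\alpha$ and $\beta$ and constrained by the common lower bound $\gamma$, exactly balance the multiplicities generated when $\eta_{\mu\nu}$ is iterated $\lambda$ times and symmetrized in the manner forced by the jet structure. In essence this is a multi-index generalization of Vandermonde's identity, together with the observation that the sign $(-1)^{\beta_{0}}$ reproduces the product of $\eta_{00}=-1$ factors arising from pairing time indices in each of the $\lambda$ copies of $\eta$. The identity itself is not deep but is notationally dense; I would carry it out by an induction on $\lambda$, with the base case $\lambda=1$ recovering the ordinary Minkowski metric $\eta_{\mu\nu}\,dx^{\mu}\otimes dx^{\nu}$ and the inductive step matching the two sides multi-index by multi-index. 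Once this bookkeeping is complete, both the Lorentz and the translation parts combine to give invariance under the full Poincaré group.
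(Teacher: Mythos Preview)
Your approach is correct in spirit and coincides with the paper's: both reduce Lorentz invariance of $\hat{\eta}$ to the factorization of $\hat{\eta}$ through tensor powers of the ordinary $\eta$, after which $\Lambda^{t}\eta\,\Lambda=\eta$ applied factor-by-factor finishes the job. The translation step is handled identically.

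The difference is one of execution. The paper avoids your combinatorial induction entirely by first passing from the coordinate basis $d^{\alpha}$ to the natural coframe basis $\varepsilon^{\mu}$ (and its $r$-fold products), in which equation~\eqref{generalized_minkowski_metric} simply \emph{is} the block-diagonal form
\[
\hat{\eta}=\eta\oplus(\eta\otimes\eta)\oplus\cdots\oplus\eta^{\otimes r},
\]
and the Lorentz transform acts as $\hat{\Lambda}=\Lambda\oplus(\Lambda\otimes\Lambda)\oplus\cdots\oplus\Lambda^{\otimes r}$. Once in this basis the invariance is a one-line computation $\hat{\Lambda}^{t}\hat{\eta}\hat{\Lambda}=\hat{\eta}$, with no multinomial bookkeeping, no Vandermonde-type identity, and no induction on $\lambda$ required. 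In other words, the ``notationally dense'' obstacle you identify is absorbed wholesale into the change-of-basis step (the equivalence between~\eqref{generalized_minkowski_metric} and the diagonal form), which the paper regards as already established in Part~I. Your route would work, but it re-derives by hand in the $d^{\alpha}$ basis what the natural-frame basis makes transparent.
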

\begin{proof}
	Invariance under rigid translations being obvious, all we have to show is the same with respect to a Lorentz transformation, denoted $\Lambda$. Now, just as above in the Euclidean case, $\Lambda$ acts on higher tangents via
	\begin{equation}
		\hat{\Lambda} := \Lambda \oplus \Lambda \otimes \Lambda \oplus \cdots \oplus \overbrace{\Lambda \otimes \cdots \otimes \Lambda}^{r~\mathrm{times}}.
	\end{equation}
	Again, when viewed in the right basis $\hat{\eta}$ is diagonal. The reader may easily check that equation (\ref{generalized_minkowski_metric}) can be succinctly rewritten as
	\begin{equation}\label{minkowski_metric_wrt_coframe_basis}
		\hat{\eta} = \eta \oplus \eta \otimes \eta \oplus \cdots \oplus \overbrace{\eta \otimes \cdots \otimes \eta}^{r ~\mathrm{times}}
	\end{equation}
	with respect to the natural frame defined by the $\varepsilon^{x_{1,\ldots,n}}$ and up to $r$-fold products of these, where $\eta = \varepsilon^\mu \otimes \varepsilon^\mu$ with $\mu$ ranging over 0,1,2,3. Then under the action of the Lorentz transform it becomes
	\begin{align}
		\hat{\Lambda}^t \hat{\eta} \hat{\Lambda} &=
		\left( \Lambda^t \eta \Lambda \right) \oplus \left( \Lambda^t \eta \Lambda \otimes \Lambda^t \eta \Lambda \right) \oplus \cdots \oplus \left( \overbrace{ \Lambda^t \eta \Lambda \otimes \cdots \otimes \Lambda^t \eta \Lambda}^{r ~\mathrm{times}} \right) \\
		&= \eta \oplus \eta \otimes \eta \oplus \cdots \oplus \overbrace{\eta \otimes \cdots \otimes \eta}^{r ~\mathrm{times}} = \hat{\eta}
	\end{align}
	in view of the defining property of the Lorentz transform that $\Lambda^t \eta \Lambda = \eta$. This last step completes the proof.
\end{proof}

\begin{remark}
	We have merely hypothesized the natural form of the Minkowskian metric that would correspond to the Ansatz (I.5.88) in Euclidean space. It would be premature, however, to regard the postulatory structure of the theory as closed. Perhaps, upon deeper reflection, it will become possible to derive it operationally from physical principles, in much the same way as Einstein obtains the Lorentz transformations to first order (and with it, implicitly, the form of the Minkowskian metric to first order as well) from the principles of covariance and the constancy of the speed of light in all inertial frames.
\end{remark}

\subsubsection{Lorentz Transform to Higher Order}\label{lorentz_to_higher_order}

Let us start with a thought experiment concerning inertial motion in presence of higher-order infinitesimals: suppose a body moves in free space i.e. in the absence of any impressed forces. In virtue of Mach's principle as formulated by Einstein in 1918 \cite{einstein_mach_principle}, the body should have no need to know about the disposition of matter elsewhere at great distances away, as somehow its integrated effect will be represented locally through the Minkowskian metric tensor. But, once we admit the presence of higher-order infinitesimals, it is not anymore evident whether or not the body must continue to move with a constant velocity (as this would be measured with respect to the inertial frame established by the distant stars---which we suppose to have been laid out beforehand by means of rigid rods and equipped with stationary clocks by which to read out the time).

There are two conceivable methods of measuring distance, which a priori need not necessarily yield the same result: 1) seriation of rigid rods, where, in the limit each rod is infinitely small compared to the spatial interval to be measured and the distance can be estimated simply by counting the number of rods it takes to fill the gap and 2) measurement of the proper time it takes to go from the initial to final terminus under conditions of inertial motion, where the distance is obtained by integrating velocity with respect to the fixed stars times the increment of proper time. Let $\mu$ be the measure defined in {\S}I.5.1. Then the two measures of arclength from proper time zero up to $\lambda$ work out to be as follows:

\begin{align}
	L_0 &:= \lim_{n \rightarrow \infty} \sum_{m=1}^n \mu \left[ \frac{(m-1)\lambda}{n},\frac{m\lambda}{n} \right] = \lim_{n \rightarrow \infty} n \left( e^{\lambda/n} - 1 \right) = \lambda \\
	L_1 &:= \mu [0,\lambda] = e^\lambda - 1 = \lambda + \frac{1}{2}\lambda^2 + \cdots
\end{align}
The ratio $L_1/L_0$ would tend to unity at short enough times for which $\lambda \ll 1$. Hence, we see that in post-Einsteinian general theory of relativity inertial motion does \textit{not} mean uniform translational velocity with respect to the fixed stars. The two procedures have a consistent interrelationship in that seriation of rigid rods by design eliminates the correction originating in higher differentials and therefore sets up a well defined coordinate system with respect to which the inertial motion can be measured.

With this preliminary, let us now discuss the Lorentz transform to second order in infinitesimals. What the Lorentz transform embodies is the following idea: given a body moving freely in Minkowski space, in the co-moving frame the time coordinate will be just proper time; the problem is to find the transformation from the original inertial frame to the co-moving frame, such that the induced metric on the world-line may become just $- d^\lambda \otimes d^\lambda$. To generalize, follow same procedure but we want to go the natural frame, for which the inertial frame is only the linear approximation valid for times $\Delta t \ll b/c$ where since the cosmological parameter $b$ is presumably large (at least many light-years) we can expect the inertial approximation to hold good for the purpose of all calculations within the solar system to a very high, if not perfect accuracy.

First, we reproduce the usual results in the first-order case. Let $\Lambda$ denote the Lorentz transform to an inertial frame moving with velocity $\mathbf{v}$ with respect to the fixed stars. Then, with $\lambda$ designating arclength along the world-line of a body in uniform translational motion with this velocity, we have coordinates given by the injection $\iota: [0,\lambda] \rightarrow \vvmathbb{R}^4$ as
\begin{equation}
	\lambda \mapsto \left( t_0 + \Lambda^0_0 \lambda, x_0 + \Lambda^0_1 \lambda, y_0 + \Lambda^0_2 \lambda, z_0 + \Lambda^0_3 \lambda \right).
\end{equation}
Now, $\iota_* \partial_\lambda = \Lambda_0^\mu \partial_\mu$. Hence, we can pull back the Minkowskian metric $\eta$ to the world-line as follows:
\begin{equation}
	\iota^* \eta \left( \iota_* \partial_\lambda, \iota_* \partial_\lambda \right) =
	\left( \Lambda_0^\nu \partial_\nu \right)^t \eta \Lambda_0^\mu \partial_\mu =
	\left( \Lambda^t \eta \Lambda \right)_{00} = \eta_{00} = -1.
\end{equation}
Therefore, $\iota^* \eta = - d^\lambda \otimes d^\lambda$, as it ought to. Thus, we arrive at the important result that the kinematics of inertial motion in the generalized theory will reduce to the Einsteinian case in so far as infinitesimals of second and higher order can be neglected. 

To compute the second-order case, we have to find the form of the Lorentz transform to this order. To this end, let $x^{0,\ldots,3}$ denote coordinates in Minkowski space in a frame stationary with respect to the fixed stars and let $y^{0,\ldots,3}$ denote the corresponding natural frame, obtained as
\begin{equation}
	y^\mu = \sum_{\alpha: ~\alpha_\mu \ge 1} x^\alpha.
\end{equation} 
It will be convenient to determine the Lorentz transform with respect to the latter, since the Minkowskian metric becomes diagonal in the natural coordinates. Consider now a proposed coordinate transform of the following form:
\begin{equation}
	y^{\prime\mu} = A^\mu_\nu y^\nu + B^\mu_{\nu\lambda} y^\nu y^\lambda.
\end{equation}
If $\hat{\eta}$ denotes the Minkowskian metric, we want to impose the condition that $\hat{\eta}^\prime = \hat{\eta}$; in other words, invariance up to quadratic terms so that the mapping $y^{0,\ldots,3} \rightarrow y^{\prime 0,\ldots,3}$ may be interpreted as a Lorentz transformation. An important point at this stage of the argument is the following: from the block-diagonal form the the generalized transformation law for jets, equation (I.2.7) above, it is apparent that one can consistently restrict to jets of first order only (or in general, go down from order $r_1$ to order $r_0 < r_1$). It is therefore clear that $A$ has to be a Lorentz transformation, or $A \in SO(1,3)$. What about $B$?

There are two conditions on the $B$ that have to be satisfied: when we expand $d^{y^{\prime}} \otimes \hat{\eta} d^{y^{\prime}}$ we wish to obtain $\eta \otimes \eta$ in the 22-sector and the off-diagonal part in the 12-sector should cancel. We may write them schematically as,
\begin{align}
	\mathrm{tr} ~\mathrm{tr} ~B \otimes \eta \otimes \eta \otimes B &= 0 \\
	\mathrm{tr} ~\mathrm{tr} ~A \otimes (\eta \oplus \eta \otimes \eta) \otimes B &= 0. \label{second_Lorentz_condition}
\end{align}
The second condition is satisfied automatically because the Minkowskian metric is block-diagonal in the natural frame. The first condition says that the 2-jets $B^{0,1,2,3}$ have to be isotropic, or lie on the light cone in the space of 2-jets, and be mutually orthogonal. In other words, we have derived the following proposition:\footnote{The author wishes to thank F. Kuipers for bringing to his attention the incorrect statement of this proposition in the first version of the manuscript (personal communication, December 8, 2023).}
\begin{proposition}\label{second_order_lorentz}
	The most general Lorentz transform at second order assumes the form
	\begin{equation}
		y^{\prime\alpha} = (\Lambda_0)^\alpha_\beta y^\beta + B^\alpha_{\nu\lambda} y^\nu y^\lambda,
	\end{equation}
	for some $\Lambda_0 \in SO(1,3)$ and for any four mutually orthogonal isotropic 2-jets $B^{0,1,2,3}$.
\end{proposition}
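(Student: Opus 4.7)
My plan is to build on the reduction already carried out in the text. The block-diagonal restriction principle for jets, equation (I.2.7), forces the 1-jet part $A$ of any candidate transformation to satisfy $A^t \eta A = \eta$, so $A \in SO(1,3)$, and the text further observes that the off-diagonal mixing condition (\ref{second_Lorentz_condition}) vanishes automatically because $\hat{\eta}$ is block-diagonal in the natural frame. All that remains is to extract the geometric content of the surviving 22-sector condition $\operatorname{tr}\operatorname{tr} B \otimes \eta \otimes \eta \otimes B = 0$.

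First I would write this condition out in components. Writing $B^\mu_{\alpha\beta}$, symmetric in the lower pair, the trace condition unpacks to
\begin{equation*}
\eta_{\alpha\gamma}\, \eta_{\beta\delta}\, B^\mu_{\alpha\beta}\, B^\nu_{\gamma\delta} = 0 \qquad (\mu,\nu = 0,1,2,3).
\end{equation*}
Next I would reinterpret each $B^\mu$, for fixed upper index $\mu$, as a single vector in the space of symmetric 2-jets, endowed with the bilinear form induced by $\eta \otimes \eta$ on the 22-sector of $\hat{\eta}$ according to equation (\ref{minkowski_metric_wrt_coframe_basis}). In this language the displayed condition becomes simply $\langle B^\mu, B^\nu \rangle = 0$ for every pair $(\mu,\nu) \in \{0,1,2,3\}^2$.

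Finally, reading off the geometric content: the diagonal entries $\mu=\nu$ force $\langle B^\mu, B^\mu\rangle = 0$, so each $B^\mu$ is isotropic and lies on the light cone in 2-jet space; the off-diagonal entries give mutual orthogonality of the four vectors. Combined with $A \in SO(1,3)$ already secured at first order, this is precisely the characterization asserted in the proposition.

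The main obstacle I anticipate is purely bookkeeping. The combinatorial factors $|\alpha|!/\alpha!$ appearing in the definition (\ref{generalized_minkowski_metric}) of $\hat{\eta}$ must be reconciled with the symmetrization conventions implicit in treating the $B^\mu$ as symmetric tensors, so that the pairing induced on the space of 2-jets really coincides with $\eta \otimes \eta$ and not with some rescaling of it that would spuriously shift which vectors count as isotropic. Once this normalization is honestly checked, the remaining content of the proposition is immediate linear algebra on the Minkowskian 2-jet space.
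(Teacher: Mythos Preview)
Your proposal is correct and follows essentially the same route as the paper: the text preceding the proposition already isolates the two schematic conditions on $B$, dismisses the 12-sector one by block-diagonality, and then interprets the surviving 22-sector condition $\operatorname{tr}\operatorname{tr}\, B \otimes \eta \otimes \eta \otimes B = 0$ exactly as you do, namely as the statement that the four 2-jets $B^{0,1,2,3}$ are isotropic and mutually orthogonal with respect to the $\eta\otimes\eta$ pairing. Your componentwise unpacking and the caveat about combinatorial normalization are reasonable elaborations, but the underlying argument is the same.
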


\subsubsection{General Principles and Field Equations}\label{general_principles}

Covariance is not necessarily a trivial issue; see Norton's 1992 paper \cite{norton_physical_content_gr} for a full discussion, which we shall presently recapitulate in nuce. Modern-day relativists are inclined to find it natural to characterize space in intrinsic terms; i.e., to model it via a smooth manifold (possibly equipped with additional structures of topological, metrical etc. significance) and then to demand that all physically meaningful statements be invariant in the sense of being independent of any choice of coordinates. In the modern view, then, the physical objects of the model will be represented by equivalence classes of tensors or other potentially observable entities and functions of them under diffeomorphism. But Einstein means something quite different by his principle of general covariance. Norton phrases it thus: Einstein does not use differentiable manifolds at all but open sets in Euclidean space on which the coordinate systems given by the $x^{0,\ldots,3}$-axes are \textit{not} charts but correspond to a representation of an inertial frame in a physically possible space-time. For convenience, one may call these \textit{Einstein-Grossmann number manifolds} (or, to be more precise, the model of space-time one would obtain upon taking a number of such open sets and identifying points in common, should more than one be needed to cover all of space-time). In modern terms, one has thereby at one's disposal a convenient means of specifying a congruence of curves.

Norton distinguishes covariance from Leibniz equivalence: for him, a diffeomorphism produces another model of a possible space-time theory while the principle of Leibniz equivalence adds the stipulation that what one gets through this procedure is physically the \textit{same} space-time as the one with which one started. In the course of his deliberations on the problem of formulating a successor theory to the special theory of relativity that would include gravitation, Einstein began to understand that there must be intermediate stages between Lorentz and general covariance---which view would of course be nonsensical if the principle of general covariance were to be a mere mathematical convenience devoid of physical content. For starting with Lorentz covariance, the coordinate description of Minkowskian space funds one with an embarrassment of riches, more than one has to right to expect (Norton: `problem of too much structure'), in that level sets of the time coordinate can be read off immediately as timelike hypersurfaces embodying a global notion of simultaneity, the timelines passing through every point specified on such a surface of simultaneity by the three spatial coordinates will correspond to bodies at rest and go over under Lorentz transformation into bodies moving with constant velocity in another inertial frame etc. Hence, one wishes to relax the degree of physicality embedded into the mathematical coordinates by permitting progressively larger classes of coordinate transformations.

Einstein's interpretation of the \textit{physical} significance of such an operation is just that asserted by the principle of equivalence; viz., that bodies at rest with respect to a uniformly accelerated frame, for instance, are to be regarded as equivalent to bodies subject to a constant gravitational force in the original frame, and so forth. Further consequences flow from the same point of view (such as the discovery that we have reason to suppose the existence of a so-called gravitational redshift); in view of this fact which is so important in underscoring the potential of relativity theory to enhance our understanding of \textit{physics}, we want to follow Einstein's lead and suggest the following
\begin{postulate}
	A principle of general covariance in Einstein's sense will be supposed, now for generalized tensors.
\end{postulate}

\begin{postulate}
An inertial frame in flat space-time is considered to be defined, in principle, by laying down a grid of rigid rods and placing stationary clocks at regular intervals. The Cartesian coordinate frame is then obtained by local measurements of orthogonality between the connecting rods and by a time coordinate at each given position in space that can be read off from the stationary clock located there. At higher order, the Cartesian coordinates so defined will be presumed to satisfy the principle of plenitude in the sense that the extended Minkowskian metric is supposed to be given by equation (\ref{generalized_minkowski_metric}) above.
\end{postulate}

\begin{postulate}
	Bodies in free space move along future-directed timelike jet geodesics of the generalized Minkowski metric.
\end{postulate}

\begin{postulate}
	The proper setting for the general theory of relativity is a pseudo-Riemannian geometry of higher order on the manifold of space-time (presumed to be differentiable); freely falling bodies move along jet geodesics \textup{(}in the limit when they are sufficiently small so as not to have any sensible effect on the large bodies, nearby or distant, that, by Mach's principle, may be regarded as responsible for establishing the background geometry\textup{)}.
\end{postulate}
In view of considerations such as these, therefore, it seems better to have recourse to Thirring's approach to the formulation of the general relativistic field equations which circumvents the need to take the trace. See \cite{thirring_vol_2}, {\S}4.2. Fortunately, the elegant extension of Cartan's formalism to the case of non-zero higher infinitesimals sketched in the previous section enables us to propose a rapid route to a good candidate for the field equation, based upon Thirring's discussion of the first-order case. Formally, the expressions will look just about the same, understanding the Levi-Civita connection to be replaced by its jet version and the exterior derivative $d$ promoted to $\text{\th}$. To wit, Thirring suggests the following five candidates for a macroscopic $4$-volume form exhaust the range of reasonable choices for what the extended gravitational part of the integrand of the Einstein-Hilbert action could be:

\begin{itemize}
	\item[$(1)$] \qquad $*1 = \sqrt{ |\hat{g}|} e^{0123}$ 
	\item[$(2)$] \qquad $R_{\alpha\beta} \wedge e^{\alpha\beta}$ 
	\item[$(3)$] \qquad $R_{\alpha\beta} \wedge *e^{\alpha\beta}$ 
	\item[$(4)$] \qquad $R_{\alpha\beta} \wedge R^{\alpha\beta}$ 
	\item[$(5)$] \qquad $R_{\alpha\beta} \wedge *R^{\alpha\beta}$ 
\end{itemize}
The first choice corresponds to a cosmological-constant term which we are free to include or to omit as we see fit. In any case, it will not by itself yield the desired local dynamics. The second can be seen to vanish identically; in fact, it is tantamount to casting the second Bianchi identity into an algebraic form, as the following lemma shows:

The third is the winning choice in Einstein's version of the general theory of relativity and it stands to reason it ought to continue to be valid in post-Einsteinian theory, so as to assure an unbroken transition in the scaling limit. This Ansatz proves desirable moreover because, written in Cartan form, the field equations immediately imply a conservation law for matter on the right hand side. The fourth possibility for the sought-for action is nugatory because it reduces to an exact 4-form (cf. Thirring's Problem 4.2.25,6), mutatis mutandis, \cite{thirring_vol_2}). The fifth would yield a non-abelian gauge-type theory. In view of the prospective unification of forces to be investigated in Part III, we should like to retain this term if at all possible.

Thus, the problem becomes to find a reasonable combination of those of the above possibilities that are viable. By way of motivation for the choice to be announced in a moment, let us remark on the role of dark energy. If it be viewed phenomenologically as a macroscopic contribution to the dynamics of space-time left over from microscopic interactions at the quantum-field theoretic level, there would seem to be no necessity to restrict it to assume the form of a constant amount of energy per unit volume. After playing around with possible constructions by means of which to extend the Einstein-Hilbert action to higher-order jets, rather, the following view suggests itself as natural. It could be supposed that dark energy enters as a constant background contribution to the \textit{curvature} of space-time. That is, we may write the generalized Riemannian curvature tensor in the following form,
\begin{equation}\label{curvature_with_dark_energy}
	R^\alpha_\beta - \lambda^2 \Xi^\alpha_\beta = R^\alpha_\beta - \lambda^2 B^{|\alpha|+|\beta|-2} e^\alpha \wedge e_\beta,
\end{equation}
where we separate out a factor of $\lambda^2$ (having dimensions of inverse length squared) and, when needed, an as-yet unknown cosmological scale $B$ (having dimensions of length) so that, for 1-jets, the cosmological term will be a 2-form having the same units as the curvature itself, viz., inverse length squared. For higher jets, the additional dimensional factors proportional to powers of length have been incorporated into the factor involving the parameter $B$. The minus sign ensures that the cosmological constant in the field equation is positive.

In equation (\ref{curvature_with_dark_energy}), $R^\alpha_\beta$ indicates the curvature 2-form arising from all other sources than dark energy, as usual, but of course, here, one also allows the multi-indices to run over jets of all orders. It will be convenient to gather all components into a single entity, the total curvature $R - \lambda^2 \Xi$, where
\begin{align}
	R &:= R^\alpha_\beta \otimes e_\alpha \wedge e^\beta \nonumber \\
	\Xi &:= \sum_{\alpha < \beta} e^\alpha \wedge e_\beta \otimes e_\alpha \wedge e^\beta.
\end{align}
The cosmological term thus defined corresponds to a covariant object, what is the content of the following proposition.

\begin{proposition}
	If $e^\alpha$ and $f^\alpha$ are two orthonormal co-frames, the expression $\Xi$ assumes with respect to either has the same form.
\end{proposition}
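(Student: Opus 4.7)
The plan is to exploit the duality between the coframe 1-forms $e^\alpha$ and their lowered-index counterparts $e_\alpha := \hat{\eta}_{\alpha\gamma}\, e^\gamma$ under transformations preserving the generalized Minkowskian metric. Two orthonormal co-frames $e^\alpha$ and $f^\alpha$ must be related by some $f^\alpha = \hat{\Lambda}^\alpha_\mu\, e^\mu$ satisfying $\hat{\Lambda}^t \hat{\eta}\, \hat{\Lambda} = \hat{\eta}$, where $\hat{\Lambda} = \Lambda \oplus \Lambda \otimes \Lambda \oplus \cdots$ is the block-diagonal prolongation to higher jets familiar from the first proposition of {\S}\ref{Minkowski_tensor_to_higher_order}. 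Raising and lowering with $\hat{\eta}$, the orthonormality condition immediately yields the contragredient transformation law for the lowered-index objects, namely $f_\alpha = (\hat{\Lambda}^{-1})^\mu_\alpha\, e_\mu$.

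First I would rewrite the restricted sum as an unrestricted symmetric one. In an orthonormal co-frame $\hat{\eta}$ is diagonal, so $e_\alpha = \hat{\eta}_{\alpha\alpha}\, e^\alpha$ (no sum), whence $e^\alpha \wedge e_\alpha = 0$ by antisymmetry of the wedge. A short calculation, expanding $e^\alpha \wedge e_\beta \otimes e_\alpha \wedge e^\beta = \hat{\eta}_{\alpha\alpha}\hat{\eta}_{\beta\beta}\,(e^\alpha \wedge e^\beta) \otimes (e^\alpha \wedge e^\beta)$, shows that the summand is invariant under the swap $\alpha \leftrightarrow \beta$ (the two sign flips from antisymmetry cancel). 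Hence
\begin{equation}
\Xi = \frac{1}{2} \sum_{\alpha, \beta} e^\alpha \wedge e_\beta \otimes e_\alpha \wedge e^\beta,
\end{equation}
which puts the index manipulations into a transparent form.

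The heart of the proof is then a short bookkeeping calculation: substitute $f^\alpha = \hat{\Lambda}^\alpha_\mu\, e^\mu$ and $f_\alpha = (\hat{\Lambda}^{-1})^\mu_\alpha\, e_\mu$ into the analogous expression for $\Xi$ with respect to the second co-frame, yielding
\begin{equation}
\Xi_{(f)} = \frac{1}{2} \sum_{\alpha, \beta} \hat{\Lambda}^\alpha_\mu\, (\hat{\Lambda}^{-1})^\nu_\beta\, (\hat{\Lambda}^{-1})^\rho_\alpha\, \hat{\Lambda}^\beta_\sigma \, e^\mu \wedge e_\nu \otimes e_\rho \wedge e^\sigma.
\end{equation}
Summing over $\alpha$ pairs $\hat{\Lambda}^\alpha_\mu$ with $(\hat{\Lambda}^{-1})^\rho_\alpha$ to give $\delta^\rho_\mu$, and summing over $\beta$ pairs $(\hat{\Lambda}^{-1})^\nu_\beta$ with $\hat{\Lambda}^\beta_\sigma$ to give $\delta^\nu_\sigma$. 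The expression collapses at once to $\Xi_{(e)}$, establishing the invariance.

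The main obstacle, such as it is, is purely conceptual rather than computational: one must remember that $\alpha, \beta$ are multi-indices ranging over jets of all orders, so the transformation is realized by the prolongation $\hat{\Lambda}$ rather than by $\Lambda$ alone, and one needs to note that the inverse $\hat{\Lambda}^{-1}$ likewise respects the block-diagonal structure inherited from the first proposition of {\S}\ref{Minkowski_tensor_to_higher_order}. Because $\hat{\Lambda}$ preserves $\hat{\eta}$ order by order, the dual-transformation identity $f_\alpha = (\hat{\Lambda}^{-1})^\mu_\alpha\, e_\mu$ holds within each jet sector, and the cancellation above applies verbatim sector by sector. No genuinely new analytical difficulty arises beyond the structural facts already secured earlier in this section.
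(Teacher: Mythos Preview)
Your argument is essentially the same as the paper's: both rewrite the restricted sum $\sum_{\alpha<\beta}$ as $\tfrac{1}{2}\sum_{\alpha,\beta}$, substitute the change of orthonormal co-frame, and collapse the result using the orthogonality relation for the transformation matrix (you phrase it as $\hat{\Lambda}^\alpha_\mu(\hat{\Lambda}^{-1})^\rho_\alpha=\delta^\rho_\mu$, the paper as $A^\alpha_\mu A_\alpha^{\mu'}=\delta_{\mu\mu'}$, which is the same identity after raising and lowering with $\hat\eta$).

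One caveat: your assertion that the relating transformation \emph{must} take the block-diagonal prolongation form $\hat{\Lambda}=\Lambda\oplus\Lambda\otimes\Lambda\oplus\cdots$ is too restrictive---the paper allows a general $A\in SO(p,q)$ on the full jet space (and indeed Proposition~\ref{second_order_lorentz} already exhibits non-block-diagonal second-order Lorentz transformations via the $B$-terms). Fortunately your computation never actually uses the block structure, only the relation $\hat{\Lambda}^t\hat{\eta}\,\hat{\Lambda}=\hat{\eta}$, so the proof stands once you drop that unnecessary claim.
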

\begin{proof}
	Suppose the two co-frames are related by a possibly position-dependent matrix $A$ as follows:
	\begin{equation}
		B^{|\alpha|-1} e^\alpha = A^\alpha_\beta B^{|\beta|-1} f^\beta,
	\end{equation}
	where $A \in SO(p,q)$ where $p$ counts the number of timelike directions and $q$ the number of spacelike directions among jets up to $r$-th order. Recall that the multi-index $\alpha$ is timelike resp. spacelike according to whether $(-1)^{\alpha_0}$ equals $-1$ resp. $1$. Now, with respect to the first co-frame basis $\Xi$ assumes the form
	\begin{equation}\label{cosmological_in_coframe_basis}
		\Xi^\alpha_\beta = \frac{1}{2} \begin{pmatrix}
			0 & 1 & 1 & \cdots & 1 \\ 
			-1 & 0 & 1 & \cdots & 1 \\
			\vdots & -1 & 0 & \ddots & \vdots \\
			\vdots & & \ddots & \ddots & 1 \\
			-1 & \cdots & \cdots & -1 & 0 \\
		\end{pmatrix}.
	\end{equation}
	Under the change of co-frame, $\Xi$ transforms in the same way as the Maxwellian field-strength tensor would, namely, $A \mapsto A^t \Xi A$. But $A$ itself can be obtained from a product of elements of the form
	\begin{equation}\label{spacelike_factor}
		R = \begin{pmatrix}
			1 & 0 & \cdots & \cdots & \cdots & \cdots & 0 \\
			0 & \ddots & & & & & \vdots \\
			\vdots & & \cos \theta & & - \sin \theta & & \vdots \\
			\vdots & &  & \ddots & & & \vdots \\
			\vdots & & \sin \theta & & \cos \theta & & \vdots \\
			\vdots & & & & & \ddots & \vdots \\
			0 & \cdots & \cdots & \cdots & \cdots & \cdots & 1 \\
		\end{pmatrix}
	\end{equation}
	resp.,
	\begin{equation}\label{timelike_factor}
		S = \begin{pmatrix}
			1 & 0 & \cdots & \cdots & \cdots & \cdots & 0 \\
			0 & \ddots & & & & & \vdots \\
			\vdots & & \cosh \phi & & \sinh \phi & & \vdots \\
			\vdots & &  & \ddots & & & \vdots \\
			\vdots & & \sinh \phi & & \cosh \phi & & \vdots \\
			\vdots & & & & & \ddots & \vdots \\
			0 & \cdots & \cdots & \cdots & \cdots & \cdots & 1 \\
		\end{pmatrix},
	\end{equation}
	where one takes the former if the two columns resp. rows having non-zero off-diagonal components have the same signature and the latter if they have opposite signature. Then,
	\begin{align}
		\Xi &= \sum_{\alpha<\beta} e^\alpha \wedge e_\beta \otimes e_\alpha \wedge e^\beta \nonumber \\
		&= \frac{1}{2} \sum_{\alpha\beta} e^\alpha \wedge e_\beta \otimes e_\alpha \wedge e^\beta \nonumber \\
		&= \frac{1}{2} \sum_{\alpha\beta} A^\alpha_\mu f^\mu \wedge A_\beta^\nu f_\nu \otimes A_\alpha^{\mu^\prime} f_{\mu^\prime} \wedge A^\beta_{\nu^\prime} f^{\nu^\prime} \nonumber \\
		&= \frac{1}{2} \sum_{\alpha\beta} \left( A^\alpha_\mu  A_\alpha^{\mu^\prime} \right) \left(  A_\beta^\nu  A^\beta_{\nu^\prime} \right)
		f^\mu \wedge f_\nu \otimes f_{\mu^\prime} \wedge f^{\nu^\prime}.
	\end{align}
	But
	\begin{equation}
		A^\alpha_\mu  A_\alpha^{\mu^\prime} = (A^t)^\mu_\alpha A^\alpha_{\mu^\prime} =
		(A^t \eta A)^\mu_{\mu^\prime} =\eta^\mu_{\mu^\prime} = \delta_{\mu\mu^\prime}
	\end{equation}
	and similarly for the other factor. Therefore,
	\begin{equation}
		\Xi = \frac{1}{2} \sum_{\mu\nu} f^\mu \wedge f_\nu \otimes f_\mu \wedge f^\nu = \sum_{\mu<\nu} f^\mu \wedge f_\nu \otimes f_\mu \wedge f^\nu;
	\end{equation}
	i.e., $\Xi$ can be defined in the same way with respect to any co-frame.
\end{proof}

There results from this suggestion a natural proposal for the extended Einstein-Hilbert action, viz., to produce a volume form by wedging the Riemannian curvature of space-time so expressed with itself (after taking the Hodge-*). Therefore, we arrive at
\begin{definition}[Cf, Thirring, \cite{thirring_vol_2}, 4.2.4]\label{einstein_hilbert_action}
	The Einstein-Hilbert action in the post-Einsteinian general theory of relativity may be written as $\int \mathscr{L}_{EH}$, where
	\begin{equation}
		\mathscr{L}_{EH} = \frac{1}{16 \pi \varkappa} \left( R_{\alpha\beta} - \lambda^2 \Xi_{\alpha\beta} \right) \wedge * 
		\left(  R^{\alpha\beta} - \lambda^2 \Xi^{\alpha\beta} \right).
	\end{equation}
\end{definition}

In order to obtain generalized field equations, one needs some means of representing the stress-energy of neutral gravitating matter to appear on the right-hand side. Here, we may take our cue once again from Thirring. But one wishes to prepare the ground for the eventual unification of gravitation with gauge forces.
In view of what has already been said, the following proposal suggests itself as natural: we construe matter of all kinds, including but not limited to the effective contribution arising from dark energy, as a local perturbation to the curvature of space-time---with this difference, that, as everyone knows, matter can be in motion (unlike dark energy). Therefore, its bulk form may be represented through a vector field or, by lowering the spatial index, a jet field (current), where now we tensor with two indices to obtain a flow of curvature 2-forms; thus, we write the matter current as
\begin{equation}
	J \in \mathscr{J}^{\infty *} \otimes \mathscr{J}^{\infty} \otimes \mathscr{J}^{\infty *}.
\end{equation} 
This idea proves to be most fruitful. For, as we shall demonstrate in detail in Part III, the interpretation naturally suggests itself that when the curvature 2-form multi-indices range over infinitesimals of higher than first order, these ought to be equivalent to a gauge field. For the purposes of the present section, however, we want to limit ourselves to neutral bulk matter only, which in general relativistic theory goes under the name of dust. Thus, dust ought to be expressible by means of a matter current in which all gauge multi-indices of higher than first order are suppressed. Hence, we will write for the neutral matter current, or dust (denoting the density by $\varrho$ and the spatial part of the current by $X$),
\begin{equation}\label{dust_current}
	J = - \frac{1}{2} \lambda^2 \varrho X \otimes \mathrm{id}_{\mathscr{J}^1},
\end{equation}
in which the gauge dependence is just the identity operator on the image of the canonical injection of 1-vector fields $\mathscr{J}^1 \subset \mathscr{J}^\infty$, and zero elsewhere (that is, one must project orthogonally onto $\mathscr{J}^1$). From here, it is a short step to a proposed phenomenological form of the dust term in the action which will lead upon variation to a suitable stress-energy tensor on the right hand side of the field equation:
\begin{equation}\label{dust_lagrangian}
	\mathscr{L}_\mathrm{dust} = \frac{1}{4} ~\mathrm{tr}~ X \otimes \mathrm{id} \wedge * J = - \frac{1}{8} \lambda^2 \varrho ~\mathrm{tr}~ X \otimes \mathrm{id} \wedge * X \otimes \mathrm{id} = - \frac{1}{4} \lambda^2 \varrho X \wedge * X.
\end{equation}
Here, the trace indicates a contraction with respect to the gauge indices of $X \otimes \mathrm{id}$ resp. $J \otimes \mathrm{id}$, taken as usual by raising one index with the metric tensor $g^{-1}$. Be careful in equation (\ref{dust_lagrangian}) to understand that the wedge product and the Hodge-* operate only on the spatial index of the dust current. Let us comment that the minus signs in equations (\ref{dust_current}) and (\ref{dust_lagrangian}) reflect the fact that the cosmological constant is actually positive; i.e., $\Lambda =  \frac{1}{2} \lambda^2 > 0$.

These interpretations of the form in which the Einstein-Hilbert and dust actions extend to higher order being in place, we are led to

\begin{postulate}
	The generalized field equations are given by theorem \ref{post_Einsteinian_field_equ} below.
\end{postulate}

\subsubsection{Variational Formulation}

The approach to be followed in this subsection will be simply to incorporate analogues of whatever may be needed from Thirring vol. 2, {\S}1.2, then to follow {\S}{\S}4.2.6, 4.2.9 in \cite{thirring_vol_2}.

\begin{lemma}\label{curv_variation}
	The curvature part of the variation of the Einstein-Hilbert action yields merely a total differential and therefore may be neglected; i.e., we have that $*e^{\alpha\beta} \wedge \delta R_{\alpha\beta} = \text{\th} \left( * e^{\alpha\beta} \wedge \delta \omega_{\alpha_\beta} \right)$.
\end{lemma}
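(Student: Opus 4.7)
The plan is to imitate Thirring's first-order argument (\cite{thirring_vol_2}, §4.2.6) while promoting $d$ to $\text{\th}$ and the Levi-Civita connection to its jet version. The essential ingredients are the generalized second Cartan structure equation, the Leibniz rule for $\text{\th}$ acting on wedge products, and the metric-compatibility of the jet Levi-Civita connection as encoded in the covariant constancy of $*e^{\alpha\beta}$.

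First I would record the second Cartan structure equation in jet form, $R^\alpha_\beta = \text{\th}\,\omega^\alpha_\beta + \omega^\alpha_\gamma \wedge \omega^\gamma_\beta$, and vary it. Since $\text{\th}$ commutes with $\delta$ and $\delta\omega^\alpha_\beta$ is tensorial (even though $\omega^\alpha_\beta$ itself is not), one obtains
\begin{equation*}
\delta R^\alpha_\beta = \text{\th}(\delta\omega^\alpha_\beta) + \delta\omega^\alpha_\gamma \wedge \omega^\gamma_\beta + \omega^\alpha_\gamma \wedge \delta\omega^\gamma_\beta,
\end{equation*}
which is the covariant-exterior derivative of the connection variation. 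Lowering indices with the generalized $\hat{\eta}$ (antisymmetry of $\omega_{\alpha\beta}$) and wedging with $*e^{\alpha\beta}$ gives the left-hand side of the claimed identity.

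Next I would compute the candidate boundary term by the Leibniz rule,
\begin{equation*}
\text{\th}\bigl(*e^{\alpha\beta} \wedge \delta\omega_{\alpha\beta}\bigr) = \text{\th}(*e^{\alpha\beta}) \wedge \delta\omega_{\alpha\beta} + (-1)^{\deg(*e^{\alpha\beta})}\, *e^{\alpha\beta} \wedge \text{\th}(\delta\omega_{\alpha\beta}),
\end{equation*}
and rewrite $\text{\th}(*e^{\alpha\beta})$ using metric compatibility. The latter, extracted from Cartan's first structure equation with vanishing torsion at every jet order, reads $\text{\th}(*e^{\alpha\beta}) = -\omega^\alpha{}_\gamma \wedge *e^{\gamma\beta} - \omega^\beta{}_\gamma \wedge *e^{\alpha\gamma}$. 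Substituting this identity and relabelling dummy indices, the two $\omega\wedge\delta\omega$ contributions produced by the Leibniz rule should precisely reproduce the two quadratic terms coming from $\delta R_{\alpha\beta}$, leaving only $*e^{\alpha\beta} \wedge \text{\th}(\delta\omega_{\alpha\beta})$, which is the remaining piece of $*e^{\alpha\beta} \wedge \delta R_{\alpha\beta}$. Matching signs then yields the asserted equality modulo $\text{\th}$-exact terms.

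The main obstacle will be checking that the Leibniz rule and the torsion-free metric-compatibility identity pass through intact when $d$ is replaced by $\text{\th}$ and the frame indices run over jet multi-indices of arbitrary order; in particular, I must track the sign conventions fixed in Part I, the higher-order form of the first structure equation used to express $\text{\th}(*e^{\alpha\beta})$ in terms of $\omega$, and the antisymmetry $\omega_{\alpha\beta} = -\omega_{\beta\alpha}$ inherited from the orthonormality of the co-frame, without which the two quadratic contributions would cancel prematurely rather than cancelling against those in $\delta R_{\alpha\beta}$. Since the preceding proposition on $\Xi$ already guarantees that all objects behave covariantly under change of co-frame, no new geometric input should be needed beyond a careful index-chase extending Thirring's computation.
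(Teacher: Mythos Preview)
Your outline matches the paper's approach almost exactly: vary the second Cartan equation, expand $\text{\th}(*e^{\alpha\beta}\wedge\delta\omega_{\alpha\beta})$ by Leibniz, use the identity for $\text{\th}(*e^{\alpha\beta})$ in terms of $\omega$, and match the quadratic pieces. The paper indeed cites Thirring's first-order computation as the template and simply promotes indices to multi-indices.

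Where the paper does more than you indicate is precisely on the two points you flag as ``obstacles'' but do not resolve. First, the antisymmetry $\omega_{\alpha\beta}=-\omega_{\beta\alpha}$ is not taken for granted in the jet setting: the paper derives it from the compatibility condition $\nabla_X g^{-1}=0$, which written out as $\omega+\omega\omega^t+\omega^t=0$ yields (after splitting $\omega=A+B$ into symmetric and antisymmetric parts) a recursion forcing the symmetric part $A$ to vanish order by order in the jets. This is a genuine step, not a triviality, since the cross-terms $\omega\omega^t$ mix jet orders. Second, the Leibniz rule for $\text{\th}$ acquires extra cross-terms (the ``vee expansion'' of Part~I), and the paper argues explicitly that these cancel by a relative minus sign, invoking lemma~I.4.6. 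Your proposal assumes the ordinary Leibniz rule holds unchanged; in this formalism that is exactly what must be checked, and the cancellation of the higher-order cross-terms is the only place where the jet generalization is not purely cosmetic. Once those two facts are in hand, your index-chase goes through just as you describe.
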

\begin{proof}
	Here again, the substance of Thirring's solution applies where we merely must take care to ensure that the additional cross-terms appearing in the Leibniz rule for generalized derivations cancel out in the final result. We proceed thus:
	
	First, we argue that the jet affine connection corresponding to the Levi-Civita jet connection is represented by an antisymmetric matrix: $\omega^\alpha_\beta=-\omega^\beta_\alpha$. The reason for this may be seen as follows. With respect to the orthonormal frame or coframe bases, the coefficients of $g^{-1}$ resp. $g$ are given by the Kronecker delta symbol, which is to say, identically constant when $\text{\th} g_{\mu\nu} = \text{\th} g^{\mu\nu} = 0$. Write out the compatibility equation, for instance, $\nabla_X g^{-1} = 0$ in terms of the jet affine connection and one obtains a formula of the form (schematically),
	\begin{equation}\label{jet_affine}
		\omega + \omega \omega^t + \omega^t = 0,
	\end{equation}
	where as in lemma I.3.8 we renormalize to eliminate combinatorial factors. Let now the jet affine connection be broken down into its symmetric and antisymmetric parts: $\omega = A + B$ and substitute into Equation (\ref{jet_affine}) to obtain:
	\begin{equation}
		A+B+AA-AB-BA+BB+A-B=0.
	\end{equation}
	But by reason of symmetry, $AB=BA$; hence, $2A+AA-BB=0$. This result expresses the components of the symmetric part $A$ of given order in terms of something quadratic in the components of $A$ and $B$ strictly lower in order; thus, to start with the first-order components of $A$ must vanish, but then so too must the second-order components and so on inductively until we reach the conclusion that $A=0$, whence also $BB=0$ for the cross-terms of the antisymmetric part. Thus, we are left with the statement that $\omega+\omega^t=0$, as claimed.
	
	Second, we have the relation $\text{\th}_\mu *e^{\alpha\beta} = - 2 \omega^\alpha_\gamma(\partial_\mu) \wedge *e^{\gamma\beta}$. This corresponds to the formula 1.2.26 in Thirring with the exterior derivative $d$ replaced by $\text{\th}$. Fortunately, however, Thirring's solution to problem 1.2.39,8) goes through for each component $\text{\th}_\mu$ taken separately, then sum over multi-indices to get $\text{\th}$. Note: the derivation appeals to the relation 1.2.17 in Thirring, or the statement to the effect that $*e^{\alpha_1\cdots\alpha_p}$ transforms naturally like $e^{\alpha_1\cdots\alpha_p}$ under change of frame, so we must restrict to changes of frame that respect the block-diagonal decomposition (else our method of defining Hodge-* becomes inapplicable); we omit to reproduce Thirring's full derivation here as it is fairly technical and merely remark that it generalizes simply to the higher order case so that the analogous formula stands.
	
	Putting everything together, as in Thirring we may write,
	\begin{align}
		\text{\th} \left( *e^{\alpha\beta} \wedge \delta \omega_{\alpha\beta} \right) &=
		-2 \omega^\alpha_\gamma \wedge *e^{\gamma\beta} \wedge \delta \omega_{\alpha_\beta}
		+ *e^{\alpha\beta} \wedge \text{\th} \delta \omega_{\alpha\beta} + \cdots \nonumber \\
		&= *e^{\alpha\beta} \wedge \left( \text{\th} \delta \omega_{\alpha\beta} +
		2 \omega^\gamma_\beta \wedge \delta \omega_{\gamma\beta} \right) \nonumber \\
		&= *e^{\alpha\beta} \wedge \delta R_{\alpha\beta},
	\end{align}
	as was to be shown. Here, the cross-terms indicated by ellipsis on the first line cancel due to a relative minus sign in the vee expansions, cf. lemma I.4.6.
\end{proof}

\begin{theorem}[Field equations; cf. Thirring,  \cite{thirring_vol_2}, 4.2.9]\label{post_Einsteinian_field_equ}Field configurations with respect to which the Einstein-Hilbert action is stationary under infinitesimal variation satisfy the following field equations:
	\begin{align}\label{field_equ_in_explicit_form}
		- \frac{1}{2} \left( * e_{\alpha\beta\gamma} \right) \wedge \left( R^{\beta\gamma} - \lambda^2 \Xi^{\beta\gamma} \right) &= 8 \pi \varkappa * t_\alpha \\
		\text{\th} *J &= 0,
	\end{align}
	where the stress-energy tensor on the right-hand side is given by
	\begin{equation}\label{stress_energy_explicit_form}
		* t_\alpha = \frac{1}{2} \varrho \left( (i_\alpha X) *X + X \wedge i_\alpha *X \right).
	\end{equation}
\end{theorem}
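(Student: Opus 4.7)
The plan is to vary the total action $\int(\mathscr{L}_{EH}+\mathscr{L}_\mathrm{dust})$ with respect to two independent sets of fields, namely the coframe $e^\alpha$ and the dust current $X$, extending Thirring's Cartan-formalism derivation (\cite{thirring_vol_2}, \S\S 4.2.6, 4.2.9) to the jet setting. The division of labor is transparent: variation with respect to $e^\alpha$ should produce the first field equation (\ref{field_equ_in_explicit_form}), variation with respect to $X$ should produce the conservation law $\text{\th}*J=0$, and the coframe variation of $\mathscr{L}_\mathrm{dust}$ should furnish the explicit form (\ref{stress_energy_explicit_form}) of $*t_\alpha$.

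First I would dispose of the variation $\delta R_{\alpha\beta}\wedge *(R^{\alpha\beta}-\lambda^2\Xi^{\alpha\beta})$ appearing in $\delta\mathscr{L}_{EH}$. By Lemma \ref{curv_variation}, applied sector-by-sector once to $*e^{\alpha\beta}$ and then, in the cross-terms involving $\Xi$, to the dual structures that arise (these transform covariantly by the preceding proposition, which is exactly what is needed in order to invoke the lemma), this contribution reduces to a total $\text{\th}$-differential and may be discarded upon integration. What remains comes entirely from the variation of $e^\alpha$ inside $\Xi^{\alpha\beta}$ and inside the Hodge duals. Following Thirring's problem 1.2.39 adapted to jets, and invoking the same block-diagonal decomposition of the Hodge-$*$ on which Lemma \ref{curv_variation} rests, the identity $\delta *e^{\alpha\beta}=-\delta e^\gamma\wedge *e^{\alpha\beta}{}_\gamma$ carries over unchanged; inserting it and collecting factors produces precisely the expression $-\frac{1}{2}(*e_{\alpha\beta\gamma})\wedge(R^{\beta\gamma}-\lambda^2\Xi^{\beta\gamma})$ on the left-hand side of the field equation, with the overall normalization absorbed into the coupling $8\pi\varkappa$.

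Turning to the matter sector, varying $\mathscr{L}_\mathrm{dust}=-\frac{1}{4}\lambda^2\varrho\, X\wedge *X$ with respect to $X$ (holding $\varrho$ fixed) and integrating by parts yields $\text{\th}*X=0$, which upon reinserting the normalization (\ref{dust_current}) becomes the stated conservation law $\text{\th}*J=0$. Varying the same Lagrangian with respect to the coframe requires the identity
\begin{equation}
\delta(X\wedge *X)=\delta e^\alpha\wedge\bigl[(i_\alpha X)*X+X\wedge i_\alpha *X\bigr],
\end{equation}
which is Thirring's standard prescription for reading off a Hodge-dual stress-energy 3-form from a matter action. This directly delivers the explicit form (\ref{stress_energy_explicit_form}) for $*t_\alpha$ on the right-hand side.

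The main technical obstacle will be the bookkeeping around the generalized Leibniz rule for $\text{\th}$ acting on products of jet-valued forms: as in the proof of Lemma \ref{curv_variation}, additional cross-terms appear whenever $\text{\th}$ couples tensor factors carrying multi-indices of differing orders, and one must verify that these cancel by the antisymmetry $\omega^\alpha_\beta=-\omega^\beta_\alpha$ established there, or else recombine into further exact pieces, so that the reduction of $\delta R_{\alpha\beta}\wedge(\cdot)$ to a boundary term continues to hold in every sector of jet order. A secondary but related point is that the coframe variations $\delta e^\alpha$ must be restricted to those respecting the block-diagonal splitting underlying the definition of the Hodge-$*$, since otherwise the key identity $\delta *e^{\alpha\beta}=-\delta e^\gamma\wedge *e^{\alpha\beta}{}_\gamma$ is not available; one should verify that this restriction is consistent with the full set of admissible gauge variations in the jet context.
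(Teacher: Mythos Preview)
Your overall strategy matches the paper's, but there are two genuine gaps.

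First, the term $R_{\alpha\beta}\wedge *R^{\alpha\beta}$ does \emph{not} vary to an exact $\text{\th}$-differential. Lemma \ref{curv_variation} concerns $*e^{\alpha\beta}\wedge\delta R_{\alpha\beta}$, not $*R^{\alpha\beta}\wedge\delta R_{\alpha\beta}$; when you try to mimic the lemma with $*R^{\alpha\beta}$ in place of $*e^{\alpha\beta}$ you pick up additional terms in $R^{\alpha\beta}{}_{,\gamma}$ that do not cancel. The paper handles this not by an exact identity but by a \emph{physical} argument: far below the Planck scale such curvature-derivative terms are negligible compared to what is retained, so the $R\wedge *R$ piece effectively drops out. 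You need either that argument or an equivalent scaling justification; otherwise the left-hand side of the field equation would carry extra contributions.

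Second, and more seriously, your derivation of the conservation law fails. In $\mathscr{L}_\mathrm{dust}=-\tfrac{1}{4}\lambda^2\varrho\,X\wedge *X$ there is no derivative of $X$, so ``integrating by parts'' is vacuous: varying $X$ directly gives $\delta X\wedge *J=0$ for all $\delta X$, i.e.\ $*J=0$, not $\text{\th}*J=0$. The paper circumvents this by introducing a scalar potential, writing $J=\text{\th}S$ and taking $\delta S$ as the independent variation; then $\delta X\wedge *J=\text{\th}\delta S\wedge *J$ genuinely integrates by parts to $-\delta S\,\text{\th}*J$ (after checking that the jet cross-terms cancel), and the Euler--Lagrange equation becomes $\text{\th}*J=0$. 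Without some such device the second field equation does not follow from your variation.
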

\begin{proof}
	For the gravitational part, first expand the Einstein-Hilbert action as
	\begin{equation}
		R_{\alpha\beta} \wedge * R^{\alpha\beta} - 2 \lambda^2 B^{|\alpha|+|\beta|-2} R_{\alpha\beta} \wedge * e^{\alpha\beta} +
		\lambda^4 e_{\alpha\beta} \wedge * e^{\alpha\beta}.
	\end{equation}
	The most adept way to handle the variational problem would be to regard the two as independent of each other and to vary them separately, whereupon the constraint (I.4.108) may be imposed via a Lagrange undetermined multiplier. We intend to display how this may be done below in {\S}III.1.3.3. For the time being, the following consideration implies that, as far as gravitational physics is concerned, the $R_{\alpha\beta} \wedge * R^{\alpha\beta}$ term drops out of the Einstein-Hilbert action. For in view of lemma \ref{curv_variation}, we may write
	\begin{equation}
		* R^{\alpha\beta} \wedge \delta R_{\alpha\beta} = \text{\th} \left( * R^{\alpha\beta} \wedge \delta \omega_{\alpha\beta} \right) +
		\mathrm{terms~in~} R^{\alpha\beta}_{,\gamma}.
	\end{equation}
	Far below the Planck scale, the terms in derivatives of the curvature will be entirely negligible compared to terms retained. So we are left with two terms, the first being,
	\begin{equation}
		\delta \left( * e_{\alpha\beta} \wedge R^{\beta\gamma} \right) =
		\delta \left( * e_{\alpha\beta} \right) \wedge R^{\gamma\beta} +
		* e_{\alpha\beta\gamma} \wedge \delta R^{\beta\gamma}.
	\end{equation}
	In view of lemma \ref{curv_variation} we may ignore the second term. But by lemma I.4.11,
	\begin{equation}
		\delta \left( * e^{\alpha\beta} \right) = \delta e^\gamma \wedge i_\gamma * e^{\alpha\beta} = \delta e^\gamma \wedge * e^{\alpha\beta}_\gamma
	\end{equation} 
	(formally just as in Thirring). The second (cosmological) term in the action may be handled as follows. First, we have $e_{\alpha\beta} \wedge * e^{\alpha\beta} = *1$. Then, by the same argument leading to $\delta * e^\alpha = \delta e^\beta \wedge i_\beta e^\alpha = \delta e^\beta \wedge * e^\alpha_\beta$, we have also $\delta * 1 = \delta e^\beta \wedge * e_\beta$ (cf. Thirring, \cite{thirring_vol_2}, Problem 4.2.25,4). Upon dividing by $\lambda^2 B^{|\alpha|+|\beta|-2}$ and multiplying by $4 \pi \varkappa$, we are left with the variation of the Einstein-Hilbert action as
	\begin{equation}
		\delta e^\alpha \wedge \left( \frac{1}{2} ( *e_{\alpha\beta\gamma} ) \wedge R^{\beta\gamma} + \frac{1}{2} \lambda^2 * e_\alpha \right).
	\end{equation}
	
	This leaves the problem to evaluate the variation of the matter part, viz., $\delta (X \wedge *J)$. From lemma I.4.11 again,
	\begin{equation}
		e^\alpha \wedge *J = J \wedge * e^\alpha,
	\end{equation}
	so that
	\begin{equation}
		e^\alpha \wedge \delta *J = \delta J \wedge * e^\alpha + J \wedge \delta * e^\alpha - (\delta e^\alpha) \wedge *J.
	\end{equation}
	But by writing out $* e^\alpha$ in terms of a product of 1-form components and collecting the variations, we find just as Thirring does that
	\begin{equation}
		\delta * e^\alpha = \delta e^\beta \wedge i_\beta * e^\alpha.
	\end{equation}
	Substitute into the above, multiply by $X_\alpha$ and sum:
	\begin{equation}
		X \wedge \delta *J = \delta X \wedge *J - \delta e^\alpha \left[ X \wedge i_\alpha *J + (i_\alpha X) *J \right].
	\end{equation}
	Therefore,
	\begin{equation}
		\delta \left( - \frac{1}{2} X \wedge *J \right) = - \delta X \wedge *J +\frac{1}{2} \delta e^\alpha \left[ X \wedge i_\alpha *J + (i_\alpha X) *J \right].
	\end{equation}
	The quantity in square brackets becomes just what we have defined to be the stress-energy tensor $* t_\alpha$ (where as an advantage the derivation illuminates why it takes the indicated form). Every step copies the corresponding step in Thirring precisely; the sole difference being that the expressions are to be understood as running over multi-indices $1 \le |\alpha| \le r$, with now $r>1$. The sole difference arising from the gauge indices contained in equation (\ref{dust_current}) is that the trace yields a factor of $\eta_{\alpha\beta} \mathrm{id}^\beta_\gamma \mathrm{id}^\gamma_\alpha = 2$, where the summations need run only over jets of first order.
	
	Assembling the results found up to now produces for the variation of the Einstein-Hilbert action,
	\begin{equation}
		\delta \mathscr{L} = - \delta X \wedge *J + \delta e^\alpha \wedge \lambda^2 \left[ 
		2 * t_\alpha + \frac{1}{8 \pi \varkappa} * e_{\alpha\beta\gamma} \wedge R^{\beta\gamma} + \frac{1}{16 \pi \varkappa} \lambda^2 * e_\alpha \right]
	\end{equation}
	(up to a total differential). We almost have the expression in its desired final form. The variations $\delta J$ and $\delta e^\alpha$ are independent, of course, but it will be convenient to write the former in terms of a scalar function, i.e., with $J = \text{\th}S$ we have the total differential
	\begin{equation}
		\text{\th} \left( \delta S *J \right) = \text{\th} \delta S \wedge *J + \delta S \text{\th} *J + \cdots.
	\end{equation}
	The cross-terms can be seen to drop out by the argument that by now will be familiar: for $\delta S$ being scalar while $*J$ is a 3-form, there will be a relative minus sign in the vee expansion of $\text{\th} (\delta S *J)$ versus that of $\text{\th} (*J \delta S)$ leading to cancellation. From what has been said, we end up with the variation of the Einstein-Hilbert action in the following form (modulo a total differential again):
	\begin{equation}
		\delta \mathscr{L} = - \delta S \text{\th} *J + \delta e^\alpha \wedge \lambda^2 \left[ 
		2 * t_\alpha + \frac{1}{8 \pi \varkappa} * e_{\alpha\beta\gamma} \wedge R^{\beta\gamma} + \frac{1}{16 \pi \varkappa} \lambda^2 * e_\alpha \right],
	\end{equation}
	from which we may immediately read off the Euler-Langrange equations as indicated in the statement of the theorem.
\end{proof}
The version figuring in Cartan's formalism into which the field equation has been cast in theorem \ref{post_Einsteinian_field_equ} mirrors Thirring's (here we have left out the electromagnetic part in his formulation, by design, as we intend in Part III to show how it appears in our theory automatically, without having to be put in by hand as Thirring does). Let us briefly review how to get to the usual formulation from this. Rewrite the left hand side using the identities in lemma I.4.11, then perform the appropriate contractions on either side. The field equation may be expressed most conspicuously along the lines Thirring sketches in his {\S}4.2.11. First note that by the lemma I.4.11 we may write the left-hand side as,
\begin{equation}
	-\frac{1}{2} * e_{\alpha\beta\gamma} \wedge R^{\beta\gamma} = \left( *e_\alpha i_\beta i_\gamma + *e_\beta i_\gamma i_\alpha + *e_\gamma i_\alpha i_\beta \right) R^{\beta\gamma}, 
\end{equation}
whence we obtain after reducing the right-hand side,
\begin{equation}
	* R_\alpha - \frac{1}{2} *e_\alpha R^\beta_\beta,
\end{equation}
lastly apply the Hodge-* to both sides of the field equations,
\begin{equation}
	R_\alpha - \frac{1}{2} e_\alpha R + \frac{1}{2} \lambda^2 e_\alpha = 8 \pi \varkappa t_\alpha.
\end{equation}
This expression reduces to something parallel to the form in which Einstein originally writes it when we substitute $R_\alpha = R_{\alpha\beta} e^\beta$, $e_\alpha = g_{\alpha\beta}e^\beta$ and $t_\alpha = T_{\alpha\beta}e^\beta$:
\begin{equation}\label{field_equ_conventional_form}
	R_{\alpha\beta} - \frac{1}{2} g_{\alpha\beta} R + \frac{1}{2} \lambda^2 g_{\alpha\beta} = 8 \pi \varkappa T_{\alpha\beta}.
\end{equation}
Two things must be borne in mind in this context: first, we recover the usual \textit{form} of the 1-jet field equation of the general theory of relativity (viz., the case $|\alpha|=|\beta|=1$), where it must now be understood that the Ricci tensor and scalar curvature, viz., $R_{\alpha\beta} = R^\gamma_{\alpha\gamma\beta}$ and $R=R^{\gamma\delta}_{\delta\gamma}$ incorporate infinitesimals of higher than first order arising from the components in the contractions with $|\gamma|, |\delta|>1$. Second, when we allow $\alpha$ and $\beta$ to run over multi-indices of all orders, we see that equation (\ref{field_equ_conventional_form}) renders explicit the presence of the entire hierarchy of higher-order sectors.

\subsection{Weak-Field Limit}\label{weak_field_limit}

The weak-field limit holds to a good approximation within the confines of the solar system. A dimensionless measure of the strength of the gravitational field is given by the quantity $GM/c^2r$, where $G = 6.67428 \times 10^{-11} \, \text{m}^3\text{kg}^{-1}\text{s}^{-2}$ is Newton's gravitational constant, $c = 2.99792458 \times 10^5 \, \text{km}/\text{s}$ is the speed of light in vacuum, $M$ is the mass of a typical body and $r$ its radius. For the sun, $M_\odot = 1.9891 \times 10^{30} \, \text{kg}$, $R_\odot = 6.955 \times 10^5 \, \text{km}$ and $GM_\odot/c^2R_\odot = 2.124 \times 10^{-6}$, while for the earth, $M_\oplus = 5.9736 \times 10^{24} \, \text{kg}$, $R_\oplus = 6,371 \, \text{km}$ and $GM_\oplus/c^2R_\oplus = 6.963 \times 10^{-10}$.

We do not intend thoroughly to investigate here the conditions under which one recovers Einstein's general theory of relativity and hence Newtonian mechanics as well in the non-relativistic limit; it will suffice, as a first stab, to recover a simple wave equation and solution in integral form for a weak gravitational field, without showing whether the contributions dropped are in fact negligible (which would call for a difficult analysis beyond the scope of the present work). Note: a weak field is a relative notion; the order of the jets matters too. We shall retain the jets and higher tangents to second order in infinitesimals. It constitutes a further key assumption that it is indeed valid merely to truncate at second order in this way.

\subsubsection{Solution to the Field Equations in the Weak-Field Limit}

Within the prescribed limits, our analysis proves to be fairly straightforward if we follow Thirring's lead (\cite{thirring_vol_2}, {\S}4.2.15). Let $x^{0,\ldots,3}$ denote Cartesian coordinates in Minkowskian space and write,
\begin{equation}
	e^\alpha = dx^\alpha + \varphi^\alpha_\beta e^\beta, \qquad \varphi_{\alpha\beta} = \eta_{\alpha\gamma}\varphi^\gamma_\beta = \varphi_{\beta\alpha},
\end{equation}
for the orthonormal coframe basis (here, of course, the multi-index runs over $1 \le |\alpha| \le 2)$. Aside: the presumed symmetry of $\varphi$ will be justified by the result of the calculation. Suppose $|\varphi^\alpha_\beta| \ll 1$ uniformly across space and time (along with all its derivatives). Then we have
\begin{equation}
	\text{\th} e^\alpha = \varphi^\alpha_{\beta,\gamma} dx^\gamma \wedge dx^\beta
\end{equation}
and it follows to first order in small quantities that
\begin{equation}\label{jet_affine_weak_limit}
	\omega^\alpha_\beta = \left( \varphi^\alpha_{\gamma,\beta} - \varphi_{\beta\gamma,}^\alpha \right) dx^\gamma
\end{equation}
(as can be seen by substitution into the definition of the jet affine connections to this order). In this case,
\begin{align}
	\text{\th} \omega^\alpha_\beta &= \left( \varphi^\alpha_{\gamma,\beta\mu} - \varphi_{\beta\gamma,\mu}^\alpha \right) dx^\mu \wedge dx^\gamma \\
	i_\alpha \text{\th} \omega^\alpha_\beta &= \left( \varphi^\alpha_{\gamma,\beta\alpha} - \varphi_{\beta\gamma,\alpha}^\alpha - \varphi^\alpha_{\alpha,\beta\gamma} + \varphi_{\beta\alpha,\gamma}^\alpha\right) dx^\gamma.
\end{align}
Any derivation of the weak-field limit in the general theory of relativity will be facilitated by suitable choice of coordinates. Thirring's approach, which we are following methodically here, renders it particularly clear why the so-called harmonic condition turns out to be convenient.

\begin{proposition}\label{existence_harmonic_coord}
	There exist harmonic coordinates in which $\text{\th} * \text{\th} x^\alpha = 0$, $\alpha=0,\ldots,3$.
\end{proposition}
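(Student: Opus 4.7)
The plan is to mirror the classical argument, due essentially to de Donder and reproduced in Thirring, that locally one can always impose $\Box_g x^\alpha = 0$, adapted so as to accommodate the jet derivative $\text{\th}$ in place of $d$ and the block-diagonal Hodge operator. I would start from an arbitrary coordinate patch $y^{0,\ldots,3}$ on the Einstein-Grossmann number manifold and seek the harmonic coordinates in the form $x^\alpha = y^\alpha + f^\alpha$ for a quadruplet of scalar functions $f^\alpha$, then exhibit the condition $\text{\th} * \text{\th} x^\alpha = 0$ as a second-order linear hyperbolic PDE system for the $f^\alpha$ whose principal symbol agrees with the Minkowskian d'Alembertian modulo terms of order $\varphi$.

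First I would compute $\text{\th} * \text{\th}$ on a scalar in the weak-field frame $e^\alpha = dx^\alpha + \varphi^\alpha_\beta e^\beta$, using the formula (\ref{jet_affine_weak_limit}) for the jet affine connection together with the identity $\text{\th}_\mu * e^{\alpha\beta} = -2 \omega^\alpha_\gamma(\partial_\mu) \wedge * e^{\gamma\beta}$ established inside the proof of lemma \ref{curv_variation}. For a scalar $\psi$, $\text{\th}\psi = \psi_{,\mu} e^\mu$, and applying $* \text{\th}$ and $\text{\th}$ again yields, to leading order, $\eta^{\mu\nu}\psi_{,\mu\nu} \cdot *1$ plus correction terms linear in $\varphi^\alpha_\beta$ and its first derivatives. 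Setting $\psi = x^\alpha = y^\alpha + f^\alpha$ therefore produces a system
\begin{equation*}
\Box f^\alpha = h^\alpha[\varphi, y] + O(\varphi \cdot \partial f),
\end{equation*}
where $h^\alpha$ is an explicit expression in the given $\varphi^\alpha_\beta$ (computed in the old chart) and $\Box = \eta^{\mu\nu}\partial_\mu\partial_\nu$.

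Next I would invoke standard local existence for linear symmetric hyperbolic systems (or, in the analytic category, Cauchy-Kovalevskaya) to produce $f^\alpha$ on a neighborhood of any chosen spacelike hypersurface, prescribing Cauchy data $f^\alpha = 0$, $\partial_0 f^\alpha = 0$ there so that the Jacobian $\partial x^\alpha/\partial y^\beta$ remains close to the identity and the $x^\alpha$ therefore form a genuine coordinate chart by the inverse function theorem. Because the jet formalism is block-diagonal with respect to $|\alpha|$, the higher-order sectors decouple at leading order from the first-order sector and can be solved inductively: the $1$-jet portion of $\text{\th}*\text{\th} x^\alpha = 0$ is handled as above, and the $2$-jet portion, once the $1$-jet harmonic frame has been fixed, becomes another linear hyperbolic equation driven by sources built from the already-determined lower-order data.

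The main obstacle, in my view, is not the PDE theoretic side (which is textbook once the principal symbol is identified) but rather the bookkeeping showing that the cross-terms generated by $\text{\th}$ mixing different jet orders do not spoil the hyperbolicity of the perturbed operator and that the successive inductive equations remain consistent at each order. This is precisely where the tensorial identity $\text{\th}_\mu * e^{\alpha\beta} = -2\omega^\alpha_\gamma(\partial_\mu) \wedge * e^{\gamma\beta}$ and the antisymmetry $\omega^\alpha_\beta = -\omega^\beta_\alpha$ established within the proof of lemma \ref{curv_variation} are essential, since they guarantee that the purely algebraic contractions involving the connection cancel to leading order, leaving a clean wave operator whose perturbation stays within the regime of hyperbolic solvability.
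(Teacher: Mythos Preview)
Your PDE-based route is sound and is in fact the standard argument for harmonic coordinates in ordinary general relativity, but it is genuinely different from what the paper does. The paper's proof is a short cohomological argument with no analysis whatsoever: starting from an arbitrary coframe $\xi^{0,\ldots,3}$, it observes that $\text{\th}*\xi^\mu$ is a top-degree form and hence trivially closed, invokes the generalized Poincar\'e lemma (I.4.4) twice to extract scalar potentials $y^\mu$ with $\xi^\mu = \text{\th}y^\mu$, and then argues from linear independence of the $\xi^\mu$ that the $y^\mu$ serve as coordinates. There is no weak-field expansion, no wave operator, no Cauchy problem, and no appeal to lemma~\ref{curv_variation} or to the antisymmetry of $\omega^\alpha_\beta$.

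What your approach buys is constructiveness and control: you see exactly where the smallness of $\varphi$ enters (hyperbolicity of the perturbed operator, invertibility of the Jacobian via the inverse function theorem), and the harmonic coordinates you produce are manifestly close to the starting chart, which is precisely what the perturbative formulae immediately following the proposition require. The paper's argument is much terser but correspondingly opaque --- the passage from ``$\text{\th}*\xi^0 = \text{\th}\lambda$'' to ``$*\lambda$ is a closed $1$-form'' is left for the reader to unpack --- and your inductive handling of the higher jet sectors addresses an issue the paper's proof does not engage with at all. One minor caveat: your argument is explicitly perturbative in $\varphi$, whereas the proposition as stated carries no weak-field hypothesis; given its placement in {\S}\ref{weak_field_limit} this is harmless, but it is a restriction the paper's cohomological argument (on its face) does not make.
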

\begin{proof}
	Start from arbitrary coordinates in the vicinity of a given point and denote the coframe basis by $\xi^{0,\ldots,3}$. From the fact that $\text{\th} * \xi^0$ is closed, it follows from the Poincar{\'e} lemma I.4.4 that (locally, of course) it is also exact; in other words, there exists a differential form $\lambda$ such that
	\begin{equation}
		\text{\th} * \xi^0 = \text{\th} \lambda.
	\end{equation}
	Since we are working locally, up to a constant which we are free to set to zero we then have that
	\begin{equation}
		* \xi^0 = \lambda \qquad \mathrm{or} \qquad \xi^0 = \pm * \lambda,
	\end{equation}
	a closed 1-form. Hence, by the Poincar{\'e} lemma again locally there must be a smooth function $y^0$ (0-form) such that $\xi^0 = \text{\th} y^0$.
	
	We wish to do the same for $\xi_1$. Since it is linearly independent of $\xi_0$, we can have $f \xi^0 + g \xi^1 = 0$ identically in a neighborhood only if $f=g=0$ identically. Therefore, the smooth function $y^1$ found via the above procedure has to be such that $\text{\th} y^1$ is linearly independent of $\text{\th} y^0$ or, that is to say, will serve as a second coordinate function. In the same way, we get a third and fourth coordinate function $y^2$ and $y^3$. By construction, however, the coordinates $y^{0,\ldots,3}$ satisfy the harmonic condition.
\end{proof}

\begin{lemma}\label{harmonic_lemma}
	In harmonic coordinates we have the following relation:
	\begin{equation}
		\varphi_{\beta\alpha,}^\alpha = \frac{1}{2} \varphi^\alpha_{\alpha,\beta}
	\end{equation}
	(in the weak-field limit only, needless to say).
\end{lemma}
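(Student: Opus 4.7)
The plan is to expand the harmonic-coordinate condition $\text{\th} * \text{\th} x^\alpha = 0$ furnished by proposition \ref{existence_harmonic_coord} to first order in the small perturbation $\varphi$. Because each $x^\alpha$ (for $\alpha = 0,\ldots,3$) is a scalar coordinate function, $\text{\th} x^\alpha = dx^\alpha$ sits entirely in the 1-jet sector, so the whole computation collapses to a standard first-order manipulation of 1-forms, even though $\varphi$ itself may carry higher-order jet labels through its indices.

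First, I would invert the defining coframe relation $e^\alpha = dx^\alpha + \varphi^\alpha_\beta e^\beta$ to obtain, modulo quadratic error, $dx^\alpha = e^\alpha - \varphi^\alpha_\beta e^\beta$. Since the Hodge-$*$ acts diagonally in the orthonormal coframe, one has $*dx^\alpha = *e^\alpha - \varphi^\alpha_\beta\, *e^\beta + O(\varphi^2)$. The second step is to translate the three-forms $*e^\beta$ back into the coordinate basis $dx^{\gamma_1\gamma_2\gamma_3}$, which introduces a further linear correction through the inverse of the coframe relation applied three times. Collecting, the final expression for $*dx^\alpha$ displays two independent linear-in-$\varphi$ contributions, corresponding in classical language to the expansions of $g^{\alpha\beta}$ (producing a factor $-2\varphi^{\alpha\beta}$ multiplying the coordinate three-form) and of $\sqrt{|g|}$ (contributing a trace term $+\varphi^\gamma_\gamma$), so that the whole expression reproduces the conventional $\sqrt{|g|}\, g^{\alpha\beta}$-weighted coordinate three-form to the order kept.

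Applying $\text{\th}$, which at first order in $\varphi$ coincides with the ordinary exterior derivative $d$, and demanding that the result vanish for every $\alpha = 0,\ldots,3$, yields the linear relation $-2\, \partial_\beta \varphi^{\alpha\beta} + \partial^\alpha \varphi^\gamma_\gamma = 0$. Lowering the free index with $\eta$ and invoking the already-noted symmetry $\varphi_{\alpha\beta} = \varphi_{\beta\alpha}$ converts this into exactly the claimed identity $\varphi_{\beta\alpha,}^\alpha = \tfrac{1}{2}\, \varphi^\alpha_{\alpha,\beta}$.

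The main obstacle will be the careful bookkeeping of the two linear contributions: the factor of $2$ born from the \emph{two} separate occurrences of $\varphi$ produced by converting both $dx^\alpha$ and $*e^\beta$ into the coordinate basis is precisely what gives rise to the $\tfrac{1}{2}$ in the final identity, so any miscount would vitiate the statement. One should also verify that no spurious higher-jet cross-term intrudes: since $\text{\th} x^\alpha = dx^\alpha$ resides in the 1-jet sector and the Hodge-$*$ respects the block-diagonal decomposition of the jet bundle noted earlier in connection with the second-order Lorentz transform, the higher-jet components of $\varphi^\alpha_\beta$ can enter only through the implicit summation over $\alpha$ on the left-hand side of the claimed identity, and the relation therefore holds uniformly with respect to the order of the free index $\beta$.
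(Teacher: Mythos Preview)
Your argument is correct but follows a different route from the paper's. You convert $*\text{\th} x^\alpha$ back into the coordinate basis $dx^{\gamma_1\gamma_2\gamma_3}$ and identify the two linear-in-$\varphi$ pieces with the classical expansions of $g^{\alpha\beta}$ and $\sqrt{|g|}$, so that the harmonic condition becomes the familiar statement $\partial_\beta(\sqrt{|g|}\,g^{\alpha\beta})=0$ linearized. The paper instead never leaves the orthonormal coframe: it applies $\text{\th}$ directly to $*e^\alpha-\varphi^\alpha_\beta\,*e^\beta$, invokes the structure relation $\text{\th}\,*e^\alpha=-\omega^\alpha_\beta\wedge *e^\beta$, and substitutes the weak-field connection $\omega^\alpha_\beta=(\varphi^\alpha_{\gamma,\beta}-\varphi_{\beta\gamma,}{}^\alpha)\,dx^\gamma$ already derived in equation~(\ref{jet_affine_weak_limit}). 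The contraction is then effected via the algebraic identity $e^\gamma\wedge *e^\beta=\eta^{\gamma\beta}*1$ rather than by reading off a coordinate divergence. Your approach has the virtue of making the link to the textbook harmonic-gauge condition transparent; the paper's has the virtue of staying entirely within the Cartan formalism used elsewhere and of making the cancellation of higher-jet cross-terms (the ellipsis in the paper's computation) explicit, rather than relying on the block-diagonality of the Hodge-$*$ as you do. Both bookkeeping devices lead to the same factor of two and hence the same $\tfrac12$.
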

\begin{proof}
	Thirring's derivation applies mutatis mutandis: if $\text{\th} x^\alpha = e^\alpha - \varphi^\alpha_\beta e^\beta$ (as will be the case to first order in small quantities), we may compute:
	\begin{align}
		0 &= \text{\th} * \text{\th} x^\alpha = \text{\th} \left( * e^\alpha - \varphi^\alpha_\beta * e^\beta \right) \nonumber \\
		&= - \omega^\alpha_\beta \wedge * e^\beta - \varphi^\alpha_{\beta,\gamma} e^\gamma \wedge * e^\beta + \cdots \nonumber \\
		&= \left( \varphi_{\beta\gamma,}^\alpha - \varphi^\alpha_{\gamma,\beta} - \varphi^\alpha_{\beta,\gamma} \right) e^\gamma \wedge * e^\beta.
	\end{align}
	Here, the cross-terms drop out because by the hypothesis $\text{\th} x^\alpha = e^\alpha - \varphi^\alpha_\beta e^\beta$ they will enter only to second order in small quantities. 	
	By lemma I.4.11, however, $e^\gamma \wedge * e^\beta = \eta^{\gamma\beta} * 1$. The indicated relation follows immediately.	
\end{proof}

It remains now merely to substitute into the field equations. Remembering that we are accounting for multi-indices up to the second order, it will be illuminating to write the field equations in a form that brings out the first- and second-order parts explicitly. Also, as usual, it will be convenient to relate the scalar curvature to the stress-energy tensor by taking the trace on both sides. We have the option, however, first to take a partial trace over the first-order resp. second-order components. Therefore, let us adopt the following notation: $R^\mu_\nu$ will now denote the first-order part of the Ricci curvature tensor while $P^{\mu\nu}_{\lambda\sigma}$ will denote its second-order part and $Q^\mu_{\lambda\sigma}$ the coupled first- and second-order part. Here, the Greek indices range over $0,\ldots,3$ only. Then, accordingly, we write $R = R^\mu_\mu$ and $P=P^{\mu\nu}_{\mu\nu}$ for the partial traces. Likewise for the stress-energy tensor defined by $t_\alpha = T_{\alpha\beta} e^\beta$ and its partial traces $t = T^\mu_\mu$ and $s = T^{\mu\nu}_{\mu\nu}$. We have arrived at a system of equations involving $R_\alpha$, $P_{\alpha\beta}$ and $Q_{\alpha,\beta\gamma}$ together with $R$ and $P$ on the left-hand side and $t_\alpha$, $t$ and $s$ on the right-hand side. When taking the partial traces it is well to recall that at second and higher orders we must enumerate the basis elements appropriately, i.e., supply them with the right combinatorial factors. Here, solely at second order, it means that off-diagonal basis elements  have to be counted twice while those on the diagonal, only once each. Therefore, the partial trace over the 1-jet sector yields a factor of 4 while over the 2-jet sector, a factor of 16. Putting everything together yields, after a little simple algebra, the following system of equations:
\begin{align}\label{gtr_field_equ}
	R_\mu &= 8 \pi \varkappa \left( t_\mu - \frac{1}{2} e_\mu \left( t + \sigma \right) \right) \\
	P_{\mu\nu} &= 8 \pi \varkappa \left( s_{\mu\nu} - \frac{1}{2} e_{\mu\nu} \left( t + \sigma \right) \right) \\
	Q_{\mu,\nu\lambda} &= 8 \pi \varkappa u_{\mu,\nu\lambda}.
\end{align}
Here, we have chosen to isolate the second-order correction in terms of the quantity $\sigma := P/8 \pi \varkappa$. The reason behind this choice will become apparent in a moment. If we just evaluate the trace of the second line we obtain an expression for $\sigma$:
\begin{equation}
	\sigma = s - 8t - 8\sigma;
\end{equation}
hence, $\sigma = \dfrac{1}{9}(s - 8t)$. As we see, the field equations assume a simple form in these terms. The quantity $\sigma$ represents the correction at second order to Einstein's field equation, i.e., the first line of equation (\ref{gtr_field_equ}). Somehow, the partial trace of $s_{\mu\nu}$ ought to contain $8t$ as its leading contribution. The reason why becomes evident when we consider the phenomenological Ansatz for the stress-energy tensor at second order, which in analogy to the first-order case we write as $T = X \odot P$. Here, $X$ is a higher-order tangent vector that represents the flow of matter and $P$ is proportional to the density of matter, i.e., mass-energy per unit volume, denoted $\varrho$. Its first-order components are to be interpreted in the usual manner as generating a 1-parameter translation group in space-time. The simplest postulate upon which we settle above would be to expect that, in a frame in which matter is locally at rest, the $X = \partial_0 + \partial_{00}$. This postulate requires modification. Recall that by our conventions $\partial_{00} = \dfrac{1}{2} \dfrac{\partial^2}{\partial t^2}$. To compensate for the $\frac{1}{2}$ factor, we should suppose for the velocity field of matter at rest $X = \partial_0 + 2 \partial_{00}$ and presume its momentum to be given just by $P= \varrho X$. At this point, a complication enters: the Ansatz (\ref{generalized_minkowski_metric}) for the generalized Minkowski metric implies that with respect to the Cartesian coordinates defining an inertial frame the second-order part picks up a coefficient of two; i.e., $d^{00} \otimes d^{00}$ enters as $2 d^{00}\otimes d^{00}$ etc. Thus, when we lower the index on the 2-velocity we find
\begin{equation}
	\hat{\eta} X = - d^0 + 4 d^{00}.
\end{equation}
For the stress-energy we then get in the original Cartesian frame,
\begin{equation}
	T_\mu^\nu = \varrho \hat{\eta} X \otimes X = \varrho d^0 \otimes \partial_0 - 2 \varrho d^0 \otimes \partial_{00} - 4 \varrho d^{00} \otimes \partial_0 + 8 \varrho d^{00} \otimes \partial_{00}.
\end{equation}
Taking the partial traces leads to $t = \varrho$ and $s = 8 \varrho$. Hence, the dominant part of $s$ is just $8t$, from which it follows that $\sigma = s - 8t = 0$. When the dust is not strictly at rest but consists of a gas of particles in relative motion to one another, of course, the stress-energy tensor acquires pressure terms and $\sigma$ may become small and non-zero. 

In any case, the ground has been prepared to derive the field equations in the weak-field limit. In Cartan's second structural formula for the Ricci curvature, the terms in $\omega \wedge \omega$ will drop out to the order of approximation we are considering. Then we find from equation (\ref{jet_affine_weak_limit}),
\begin{equation}
	R^{\alpha\beta} = \left( \varphi^{\alpha\beta}_{\gamma,\delta} - \varphi^{\beta\alpha}_{\gamma,\delta} \right) d^\delta \wedge d^\gamma.
\end{equation}
The field equations follow as
\begin{equation}
	- \varphi^\alpha_{\beta\gamma,\alpha} = 8 \pi \varkappa \left(
	T_{\beta\gamma} - \frac{1}{2} \eta_{\beta\gamma} (T+S) \right).
\end{equation}
In terms of the d'Alembertian to second order, viz.,
\begin{equation}
	\Box := - \partial_0^2 + \partial_1^2 + \partial_2^2 + \partial_3^2 + 
	\partial_{00}^2 - \partial_{01}^2 - \partial_{02}^2 - \partial_{03}^2 +
	\partial_{11}^2 + \partial_{22}^2 + \partial_{33}^2 + \partial_{12}^2 + \partial_{13}^2 + \partial_{23}^2,
\end{equation}	
we have
\begin{equation}\label{field_equ_weak_limit}
	\Box \varphi_{\beta\gamma} = 8 \pi \varkappa \left( T_{\beta\gamma} - \frac{1}{2} \eta_{\beta\gamma} (T+S) \right),
\end{equation}	
to the indicated degree of approximation. It is well to recall, at this juncture, that the curvature 2-form is built out of multilinear combinations of jets and a jet is just a formalization of a differential. The condition for a series expansion into jets of differing order to make sense in the first place is that the typical differential must have a magnitude on the order of $(B/b)^{|\alpha|}$, when expressed in dimensionless terms, where $B$ is the spatial scale of the gravitational system under consideration and $|\alpha|$ is the order of the jet. In consequence, the second-order part of the curvature 2-form $P$ will be down by two powers of $B/b$ relative to its first-order part $R$. From equation (\ref{field_equ_weak_limit}) one reads off then that the partial trace $S$ likewise will be down by two powers of $B/b$ unless the configuration of matter happens somehow to be singular, causing the series expansion to break down. But such an eventuality would be atypical. Therefore, $|S| \ll |T|$ and we may disregard it in the line defining the first-order part of the jet affine connection in the weak-field limit. For the same reason, the second-order terms in the d'Alembertian may be expected to be negligible by two powers of $B/b$ compared to those of first order.

The weak-field equations can be solved under the indicated conditions, as is usual in electrodynamics, by means of a retarded Green's function (cf. \cite{thirring_vol_2}, 4.2.18):
\begin{equation}\label{Green_fctn_solution}
	\varphi_{\alpha\beta}(\bar{x}) = \varphi^0_{\alpha\beta} + 8 \pi \varkappa \int d^4 x D^{\mathrm{ret}}(\bar{x}-x)\left( T_{\alpha\beta}(x)-\frac{1}{2}\eta_{\alpha\beta}T^\gamma_\gamma(x) \right)
\end{equation}
with the boundary conditions
\begin{equation}
	\Box \varphi^0_{\alpha\beta} = 0, \qquad \varphi^{0\alpha}_{\beta\alpha,}
	= \varphi^{0\alpha}_{\alpha,\beta}.
\end{equation}
For the retarded Green's function, one may take Thirring's formulae in terms of the Dirac delta distribution or the Heaviside step function (\cite{thirring_vol_2}, 2.2.7):
\begin{equation}
	D^{\mathrm{ret}}(x) = \frac{\delta(r-t)}{4 \pi |r|} = \frac{\delta(x^2)}{2\pi} \Theta(t), \qquad r = |\mathbf{x}|, \qquad x^2 = - x_0^2 + \mathbf{x}^2.
\end{equation}
The essential point of equation (\ref{field_equ_weak_limit}) (taking into account the subsequent discussion) is that the components of various order decouple from one another, so that the first-order components of the jet affine connection depend on the first-order components of the stress-energy, and likewise for second order and mixed first and second order.

\begin{corollary}
	The post-Einsteinian general theory of relativity reduces to the Einsteinian case in the scaling limit when the distances and time intervals considered are small compared to the fundamental unit of length. Ergo, it also yields Newton's law of universal gravitation in the non-relativistic limit.
\end{corollary}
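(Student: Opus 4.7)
The plan is to exploit the power counting in the small parameter $B/b$ that has already been developed in the discussion following equation (\ref{field_equ_weak_limit}), where $B$ is the spatial scale of the gravitational system under consideration and $b$ is the fundamental cosmological length entering the higher-infinitesimal structure. First I would observe that the $k$-th order part of the curvature 2-form is down by roughly $(B/b)^{2(k-1)}$ relative to its first-order part, and similarly that the higher-order components of the stress-energy, as enumerated in equations (\ref{gtr_field_equ}), are suppressed by the same hierarchy. Consequently, in the scaling limit $B/b \to 0$ the quantities $P_{\mu\nu}$, $Q_{\mu,\nu\lambda}$, $S$, $\sigma$, and the purely second-order pieces of the d'Alembertian all become negligible compared with their first-order analogues.

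Next I would show that under this limit the hierarchy of field equations (\ref{gtr_field_equ}) collapses to its top line alone, with $\sigma = 0$, yielding
\begin{equation*}
R_\mu = 8 \pi \varkappa \left( t_\mu - \frac{1}{2} e_\mu t \right),
\end{equation*}
which, upon rewriting $R_\mu = R_{\mu\nu} e^\nu$ and $t_\mu = T_{\mu\nu} e^\nu$, is precisely Einstein's equation in its classical form (\ref{field_equ_conventional_form}) restricted to $|\alpha|=|\beta|=1$. The cosmological-constant contribution $\frac{1}{2}\lambda^2 g_{\mu\nu}$ that survives in the first-order sector is itself a feature of ordinary Einstein theory, so its persistence does not obstruct the reduction. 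Thus the first part of the corollary amounts to identifying the surviving first-order sector with the content of the original Einsteinian theory.

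For the second part, I would then invoke the standard derivation: once one has recovered Einstein's equations, Thirring's treatment (\cite{thirring_vol_2}, \S 4.2.15 ff.) or any textbook shows that in the further weak-field, slow-motion limit with a static dust source, the retarded Green's function representation (\ref{Green_fctn_solution}) reduces, upon retaining only the $T_{00}$ component and identifying $\varphi_{00}$ with twice the Newtonian potential (in units where $c=1$), to Poisson's equation and hence to the inverse-square law. There is nothing new to prove here, since the reduction of first-order Einstein theory to Newtonian gravity is classical; the point of the corollary is merely to record that no obstruction to this reduction arises from the higher-order sectors of the post-Einsteinian theory.

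The main obstacle, though it is bookkeeping rather than a conceptual difficulty, will be making rigorous the claim that every higher-order sector is strictly subdominant in the scaling limit, i.e., that no singular configuration of matter causes the expansion in $B/b$ to break down and that the couplings $Q_{\mu,\nu\lambda}$ do not feed back into the first-order equation at leading order. For the purposes of the present corollary, the generic-case argument already given after equation (\ref{field_equ_weak_limit})—that $|S| \ll |T|$ by two powers of $B/b$ for non-singular sources—suffices; a fully uniform estimate would require a more careful functional-analytic treatment which lies beyond the scope of this initial investigation.
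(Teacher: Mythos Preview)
Your proposal is correct and follows essentially the same approach as the paper: the corollary is stated there without a separate proof, as an immediate consequence of the preceding weak-field analysis, in particular the decoupling of the various orders in equation~(\ref{field_equ_weak_limit}) and the power-counting argument that $|S|\ll|T|$ by two powers of $B/b$. You have simply spelled out in more detail what the paper leaves implicit, including the standard passage from Einstein to Newton via the retarded Green's function representation.
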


\subsubsection{Recovery of the Relativistic Equation of Motion for Orbiting Bodies in the Solar System}

As an instructive illustration of how additional terms enter into equation (\ref{field_equ_weak_limit}) without disrupting the Einsteinian limit, we derive again the relativistic equation of motion for the orbits of the planets in the solar system. The key observation is that we may put $T = X \odot P$ and resolve the right hand side of equation (\ref{field_equ_weak_limit}) into independent contributions in the weak-field limit. From equation (\ref{Green_fctn_solution}), the functional form of the perturbative solution does not depend on the lower indices. We know already that at 1-jet order, a stress-energy tensor corresponding to a neutral massive body at rest, viz., $T = d^0 \odot d^0$ (lowering indices to bring the stress-energy into the form assumed by equation (\ref{Green_fctn_solution})) yields $g_{0,0} = - 1 + 2 GM_\odot/r$. Thus, in the 2-jet case the additional terms will look the same, up to possibly sign and a numerical prefactor.

These we now derive. First, consider the 2-jet contribution from $T = - d^0 \otimes 2 d^{00} - 2 d^{00} \otimes d^0 = - 4 d^0 \odot d^{00}$. Hence, it generates a net contribution to $g_{0,00}$ of minus four times $2GM_\oplus/r$. The same argument goes for the contributions from $d^0 \odot d^{11}$, $d^0 \odot d^{22}$ and $d^0 \odot d^{33}$. Go to the 22-sector. The $T = 2 d^{00} \odot 2 d^{00}$ will generate a contribution of four times $2GM_\oplus/r$ to $g_{00,00}$ just as $d^0 \odot d^0$ produces $2GM_\oplus/r$ in $g_{0,0}$. The same applies to all quadratic combinations of 2-jets, viz., $d^{00} \odot d^{11}, \ldots, d^{33} \odot d^{33}$. Then the right hand side of the jet geodesic equation including all 2-jet contributions should read,
\begin{equation}
	- \Gamma^\mu_{0,0} X^0 X^0 - 2 \Gamma^\mu_{0,00} X^0 X^{00} - \Gamma^\mu_{00,00} X^{00} X^{00} =
	- \Gamma^\mu_{0,0} - 2 \cdot (-4) \Gamma^\mu_{0,0} 2 - 4 \Gamma^\mu_{0,0} 2 \cdot 2 = - \Gamma^\mu_{0,0}. 
\end{equation}
Hence, the end result is that in the non-relativistic limit the contributions from the 12-sector and the 22-sector cancel, leaving us with the Newtonian law of universal gravitation as usual.

On physical grounds, something like this ought to occur in the relativistic case as well, at least for velocities typical of those of orbiting bodies in the solar system. We proceed now to show how. It will be convenient to refer to Wald's derivation of the geodesic equation of motion for the Schwarzschild metric in Einsteinian general theory of relativity \cite{wald}. Wald's streamlined approach relies on the existence of Killing vector fields, which by definition generate a 1-parameter family of diffeomorphisms under which the Riemannian metric must be invariant. In the usual calculus on manifolds, this statement is equivalent to the vanishing of the Lie derivative of the metric with respect to the Killing vector field. Here, we have not as yet formulated a sensible concept of Lie derivative for vector fields of higher than first order. Nevertheless, we can abstract the practical result as far as Wald is concerned. Therefore, we make the following
\begin{definition}
	Let $\xi$ be a vector field on the manifold $M$ equipped with the Riemannian metric $g$. Then we shall say that $\xi$ is a Killing field if we have that
	\begin{equation}
		\mathrm{Sym}~ \nabla g \xi = 0.
	\end{equation}
\end{definition}
Now the key property of Killing fields is that they embody a conservation principle for motion along geodesics, as is captured by the following proposition:
\begin{proposition}[Cf. Wald, \cite{wald}, Appendix C.3]\label{killing_invariant}
	Let $\xi$ be a Killing field for the Riemannian metric $g$ and let $X$ be a solution of the jet geodesic equation in the sense that $\nabla_X X=0$. Then $\langle \xi, X \rangle$ is a conserved quantity under the flow generated by $X$.
\end{proposition}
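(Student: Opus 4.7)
The plan is to imitate the classical argument: differentiate $\langle \xi, X\rangle$ along the flow generated by $X$, split using the Leibniz rule for the Levi-Civita jet connection, then kill each resulting term using one of the two hypotheses. Concretely, I would first observe that $\langle \xi, X\rangle = (g\xi)(X)$, so that differentiation along $X$ gives
\begin{equation}
	X\langle \xi, X\rangle = \nabla_X\bigl((g\xi)(X)\bigr) = (\nabla_X g\xi)(X) + (g\xi)(\nabla_X X).
\end{equation}
The second summand vanishes at once because $X$ is assumed to satisfy the jet geodesic equation $\nabla_X X = 0$. So everything reduces to showing that $(\nabla_X g\xi)(X) = 0$.

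For this I would rewrite $(\nabla_X g\xi)(X)$ as the contraction $(\nabla g\xi)(X,X)$ of the 2-covariant tensor $\nabla g\xi$ against $X\otimes X$. Since $X\otimes X$ is manifestly symmetric under interchange of its two slots, only the symmetric part of $\nabla g\xi$ survives the contraction; explicitly,
\begin{equation}
	(\nabla g\xi)(X,X) = (\mathrm{Sym}\,\nabla g\xi)(X,X).
\end{equation}
But the Killing hypothesis says precisely that $\mathrm{Sym}\,\nabla g\xi = 0$, so the right-hand side vanishes and we conclude $X\langle \xi, X\rangle = 0$, which is the desired conservation law along the flow generated by $X$.

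The main technical point to verify carefully is the first step: that the Leibniz rule $\nabla_X((g\xi)(X)) = (\nabla_X g\xi)(X) + (g\xi)(\nabla_X X)$ really holds in the jet setting, with the multi-index summations and combinatorial factors distributing correctly. This should follow from the compatibility $\nabla g = 0$ of the Levi-Civita jet connection (used already in lemma \ref{curv_variation} to derive $\omega + \omega^t = 0$) together with the generalized derivation property of $\nabla$. The only other subtlety is making sure that ``the flow generated by $X$'' is interpreted in the higher-order sense so that $X\langle \xi, X\rangle$ denotes the jet-covariant derivative of the scalar $\langle \xi, X\rangle$ along the jet-geodesic curve; once this is granted, the conclusion reads as the vanishing of the derivative of $\langle \xi, X\rangle$ along that curve, i.e., its constancy, and the proposition follows.
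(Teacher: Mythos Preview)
Your argument is the classical one, and the step you flag as ``the main technical point'' is precisely where it breaks down in the jet setting. When $X$ has higher-order components, $\nabla_X$ acts through operators $\nabla_{\partial_\alpha}$ with $|\alpha|>1$, and these do \emph{not} obey the two-term Leibniz rule. Instead one has the generalized rule
\[
\nabla_X\langle \xi, X\rangle \;=\; X^\alpha \sum_{\alpha_1+\alpha_2=\alpha}\frac{\alpha!}{\alpha_1!\,\alpha_2!}\,\langle \nabla_{\alpha_1}\xi,\nabla_{\alpha_2}X\rangle,
\]
just as $\partial_{xx}(fg)=(\partial_{xx}f)g+2(\partial_x f)(\partial_x g)+f(\partial_{xx}g)$. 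The extreme terms $\alpha_2=\alpha$ and $\alpha_1=\alpha$ are the two you wrote down and vanish by the geodesic and Killing hypotheses respectively, but all the intermediate cross terms with $0<\alpha_1,\alpha_2<\alpha$ remain, and nothing in your outline disposes of them.

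The paper's proof is devoted almost entirely to these cross terms. It rewrites each as $\nabla g\xi(\partial_{\alpha_1},\nabla_{\alpha_2}X)$, exploits the symmetry under $\alpha_1\leftrightarrow\alpha_2$ together with the symmetry of the inner product, and then uses the Killing identity $\mathrm{Sym}\,\nabla g\xi=0$ applied to carefully chosen vectors (in particular to $\partial_{\alpha_1}+\partial_{\alpha_2}+\nabla_{\alpha_1}X+\nabla_{\alpha_2}X$) to force a cancellation in pairs. Your sentence ``this should follow from $\nabla g=0$ together with the generalized derivation property'' points in the right direction for the first step but misidentifies the difficulty: the generalized derivation property is what \emph{produces} the extra terms, not what eliminates them. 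You still need the symmetrization argument to finish.
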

\begin{proof}
	We compute
	\begin{equation}
		\nabla_X \langle \xi, X \rangle = X^\alpha \frac{\alpha!}{\alpha_1!\alpha_2!} \langle \nabla_{\alpha_1} \xi, \nabla_{\alpha_2} X \rangle = 
		X^\alpha \frac{\alpha!}{\alpha_1!\alpha_2!} \nabla g \xi(\partial_{\alpha_1},\nabla_{\alpha_2} X).
	\end{equation}
	The first and last terms vanish because by hypothesis $\nabla_X X=0$ and $\nabla g \xi(X,X)=0$. Turn to the remaining terms. Note that the expression is symmetric under interchange of $\alpha_1$ and $\alpha_2$ and furthermore under interchange of the order of $\xi$ and $X$, due to the symmetry of the inner product. Consider each term $\nabla g \xi(\partial_{\alpha_1},\nabla_{\alpha_2} X)$ separately. If we symmetrize with respect to interchange of the two arguments, we will get zero by hypothesis. On the other hand, if we antisymmetrize and then add the same expression to itself with $\alpha_1$ and $\alpha_2$ interchanged, we would have
	\begin{equation}
		\nabla g \xi(\partial_{\alpha_1},\nabla_{\alpha_2}X) - \nabla g \xi(\nabla_{\alpha_2}X,\partial_{\alpha_1}) + 
		\nabla g \xi(\partial_{\alpha_2},\nabla_{\alpha_1}X) - \nabla g \xi(\nabla_{\alpha_1}X,\partial_{\alpha_2}).
	\end{equation}
	In virtue of the identity
	\begin{equation}
		\nabla g \xi \left( \partial_{\alpha_1} + \partial_{\alpha_2} + \nabla_{\alpha_1}X + \nabla_{\alpha_2}X, \partial_{\alpha_1} + \partial_{\alpha_2} + \nabla_{\alpha_1}X + \nabla_{\alpha_2}X \right) = 0
	\end{equation}
	we may rewrite as follows:
	\begin{equation}
		2 \nabla g \xi ( \partial_{\alpha_1}, \nabla_{\alpha_2} X) + 2 \nabla g \xi (\partial_{\alpha_2},\partial_{\alpha_1}X).
	\end{equation}
	If we follow the same steps starting from $\langle X, \xi \rangle$ we get instead
	\begin{equation}
		2 \nabla g \xi (\nabla_{\alpha_1} X,\partial_{\alpha_2}) + 2 \nabla g \xi (\partial_{\alpha_2}X,\partial_{\alpha_1}).
	\end{equation}
	But summing these two expressions leads to zero in virtue of the hypothesis that $\mathrm{Sym}~ \nabla g \xi = 0$. This concludes the proof.
\end{proof}

Killing fields are easy to find in space-times with the requisite symmetry, if the metric can be expressed in a form where it is independent of a coordinate. Here, the Schwarzschild metric is clearly stationary in the sense that its components do not depend on time and azimuthally symmetric in that the components are also independent of the azimuthal angle $\phi$.

\begin{proposition}
	Under the Schwarzschild metric in the weak-field limit, $\xi_0 = \partial_0 + \partial_{00}$ and $\xi_\phi = \partial_\phi + \partial_{\phi\phi}$ will be Killing fields.
\end{proposition}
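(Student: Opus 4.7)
The plan is to reduce the Killing condition $\mathrm{Sym}\,\nabla g \xi = 0$ to the statement that the weak-field Schwarzschild metric, in coordinates adapted to its symmetries, is independent of $t$ and of $\phi$. Because the jet exterior derivative and the jet Levi-Civita connection both obey the Leibniz rule at each multi-index separately, the standard identity
\[
\nabla_\alpha \xi_\beta + \nabla_\beta \xi_\alpha = \xi^\gamma \partial_\gamma g_{\alpha\beta} + g_{\gamma\beta}\partial_\alpha \xi^\gamma + g_{\alpha\gamma}\partial_\beta \xi^\gamma,
\]
which in the first-order theory equates twice the symmetrized covariant derivative to the Lie derivative of $g$ along $\xi$, will carry over to the higher-order setting once the partial derivatives are interpreted as the corresponding jet derivations and the multi-indices $\alpha,\beta,\gamma$ are allowed to range up to second order.

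First I would verify, via the Green's function representation (\ref{Green_fctn_solution}) applied to a static, spherically symmetric point mass at the origin, that each perturbation $\varphi_{\alpha\beta}$ is a function of the radial coordinate $r$ alone. This follows from the decoupling of jet orders established in the preceding discussion together with the fact that the source terms $T_{\alpha\beta}$, $s_{\mu\nu}$ and $u_{\mu,\nu\lambda}$ are all $t$- and $\phi$-independent in the rest frame of the source. Applying the convention $\partial_{00} = \tfrac{1}{2}\partial_t^2$ (which annihilates any function of the spatial coordinates alone) then gives $(\partial_0 + \partial_{00})g_{\alpha\beta} = 0$, and passing to spherical coordinates the same argument yields $(\partial_\phi + \partial_{\phi\phi})g_{\alpha\beta} = 0$. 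Since both $\xi_0$ and $\xi_\phi$ have constant components in the relevant frame, the last two terms on the right-hand side of the displayed identity vanish automatically, while the first term reduces to precisely $\xi^\gamma \partial_\gamma g_{\alpha\beta} = 0$. Hence $\nabla_{(\alpha}\xi_{\beta)}=0$, which is the desired Killing condition.

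The main obstacle is the combinatorial book-keeping needed to extend the Leibniz--Lie identity to higher order: one must propagate the prefactors $|\alpha|!/\alpha!$ appearing in (\ref{generalized_minkowski_metric}) consistently through the connection coefficients (\ref{jet_affine_weak_limit}), and, in the $\xi_\phi$ case, through the passage from Cartesian to spherical coordinates so that the block-diagonal structure of $\hat{\eta}$ in natural coordinates survives intact to second order. One must also confirm that the cross-jet directions $\partial_{0i}$ (resp.\ $\partial_{\phi i}$) which are not included in the proposed Killing fields do not spoil the argument; for the static, spherically symmetric Schwarzschild configuration they also annihilate $g_{\alpha\beta}$, so their omission is merely a matter of stating the minimal generator of the symmetry rather than an essential restriction. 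Once this combinatorial check is discharged, the Killing property is an immediate consequence of the stationarity and azimuthal symmetry of the source.
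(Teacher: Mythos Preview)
Your approach is correct and in substance the same as the paper's: both reduce the Killing condition, for a vector field with constant components in $(t,r,\theta,\phi)$ coordinates, to the observation that the weak-field Schwarzschild metric is independent of $t$ and of $\phi$. The only difference is packaging. You route the argument through the Lie-derivative identity $\nabla_{(\alpha}\xi_{\beta)} = \tfrac{1}{2}\mathcal{L}_\xi g_{\alpha\beta}$ and then argue the right-hand side vanishes; the paper instead notes that for constant $\xi^\lambda$ the condition $\mathrm{Sym}\,\nabla g\xi=0$ is equivalent to antisymmetry of $\Gamma^\mu{}_{\nu\lambda}\xi^\lambda$ in $\mu,\nu$, writes down the weak-field Christoffel formula $\Gamma_{\mu\nu\lambda}=\tfrac{1}{2}(\partial_\nu g_{\mu\lambda}-\partial_\mu g_{\nu\lambda}+\partial_\lambda g_{\mu\nu})$, and observes that the first two terms are manifestly antisymmetric while the third vanishes since $\partial_{0+00}g=\partial_{\phi+\phi\phi}g=0$. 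This sidesteps entirely the ``combinatorial book-keeping'' you flag as the main obstacle: no higher-order Lie-derivative identity needs to be established, because the weak-field Christoffel formula already has the right shape at every multi-index order. Your Green's-function justification that $\varphi_{\alpha\beta}=\varphi_{\alpha\beta}(r)$ is sound but more than is needed; the paper simply takes stationarity and azimuthal symmetry as given features of the Schwarzschild Ansatz.
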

\begin{proof}
	Since the components of $\xi_0$ and $\xi_\phi$ are constant in the $t,r,\theta,\phi$ coordinates, the claim amounts to the statements that $\Gamma^\mu_{\nu,0+00} = - \Gamma^\nu_{\mu,0+00}$ and likewise $\Gamma^\mu_{\nu,\phi+\phi\phi} = - \Gamma^\nu_{\mu,\phi+\phi\phi}$. In the weak-field limit, the formula (I.4.13) for the Levi-Civita connection reduces to 
	\begin{equation}
		\Gamma_{\mu\nu\lambda} = \frac{1}{2} \left(  
		\partial_\nu g_{\mu\lambda} - \partial_\mu g_{\nu\lambda} + \partial_\lambda g_{\mu\nu}
		\right).
	\end{equation}
	The first two terms display the required antisymmetry, while the third vanishes in this case since the components of the metric do not depend on either $t$ or $\phi$. Now, in the weak-field limit we may raise the first index using the Minkowskian metric, which is manifestly symmetric. Therefore, the claimed result holds.	
\end{proof}

By proposition \ref{killing_invariant}, we have two conserved quantities, viz., energy per unit rest mass and angular momentum per unit rest mass:
\begin{align}
	E &= - \langle \xi_0, X \rangle \\
	L &= \langle \xi_\phi, X \rangle.
\end{align}
Here, $X$ is given by
\begin{equation}
	X = \gamma \partial_0 +  \gamma \mathrm{v} \partial_{\mathrm{x}} + A,
\end{equation}
with $A$ given by the Lorentz boost of $2 \partial_{00}$, or
\begin{equation}
	A = 2 \gamma^2 \left( \partial_{00} + 2 \mathrm{v} \partial_{0\mathrm{x}} + \mathrm{vv} \partial_{\mathrm{xx}} \right). 	
\end{equation}
We now show that a cancellation occurs so that $E$ and $L$ are given by their usual expressions, up to a correction of order $\dfrac{GM_\odot}{c^2 a}\dfrac{\varv^2}{c^2} = \left( \dfrac{G M_\odot}{c^2 a} \right)^2$, where for Mercury $a = 0.3871 ~\mathrm{AU}$ and $\left( \dfrac{G M_\odot}{c^2 a} \right)^2 = 6.506 \times 10^{-16}$, i.e., negligibly small compared to everything else over timescales in the centuries. Since for Mercury's orbital speeds around $47.9 ~\mathrm{km/s}$, $\varv/c = 1.60 \times 10^{-4}$ and $(\varv/c)^2 = 2.55 \times 10^{-8}$, we may set $\gamma = 1$ with sufficient accuracy when it appears in a numerical coefficient of $\dfrac{2GM_\oplus}{r}$. Then evaluate the energy per unit rest mass to be
\begin{align}
	E &= - \langle \xi_0, X \rangle = - \langle \partial_0 + \partial_{00}, \gamma \partial_0 + \gamma \mathrm{v} \partial_{\mathrm{x}} + A \rangle \nonumber \\
	&= \left( 1 - \frac{2GM_\oplus}{r} \right) \gamma - \langle \partial_0, A \rangle - \langle \partial_{00}, X \rangle.
\end{align}
But to determine the second and third terms on the right, we can perform a change of coordinate to the locally comoving frame employ the value of the resulting expressions in free space, which are $\langle \partial_0, A \rangle = 0$ and $\langle \partial_{00}, 2 \partial_{00} \rangle = 2$, respectively. Hence,
\begin{equation}
	E = \left( 1 - \frac{2GM_\oplus}{r} \right) \gamma - 2,
\end{equation}
just as in Wald (\cite{wald}, Equation 6.3.12, apart from the shift by the additive constant). The result holds up to the terms neglected when going to the comoving frame, which are of order $\left( \dfrac{G M_\odot}{c^2 a} \right)^2$. Similarly, the angular momentum per unit rest mass will equal $r^2 \dot{\phi}$ up to small corrections in the non-relativistic limit. Insert into the relation,
\begin{align}
	3  &= \langle X,X \rangle = \langle \gamma \partial_0 + \gamma \mathrm{v} \partial_{\mathrm{x}}, \gamma \partial_0 + \gamma \mathrm{v} \partial_{\mathrm{x}} \rangle + \langle \gamma \partial_0 + \gamma \mathrm{v} \partial_{\mathrm{x}}, A \rangle + \langle A, \gamma \partial_0 + \gamma \mathrm{v} \partial_{\mathrm{x}} \rangle + \langle A,A \rangle \nonumber \\
	&= - \left( 1 - \frac{2GM_\odot}{r} \right) \gamma^2 + \left(1 - \frac{2GM_\odot}{r} \right)^{-1} \dot{r}^2 + r^2 \dot{\phi}^2 + 2 \gamma \langle \partial_0 + \mathrm{v} \partial_{\mathrm{x}}, A \rangle + \langle A, A \rangle \nonumber \\
	&= - \left( 1 - \frac{2GM_\odot}{r} \right) \gamma^2 + \left(1 - \frac{2GM_\odot}{r} \right)^{-1} \dot{r}^2 + r^2 \dot{\phi}^2 + 4.
\end{align}
In terms of the two constants of motion we get after collecting terms,
\begin{equation}
	\frac{1}{2}\dot{r}^2 + \frac{1}{2}\left( 1 - \frac{2GM_\oplus}{r} \right) \left(\frac{L^2}{r^2}+1 \right) = \frac{1}{2}(E+1)^2,
\end{equation}
i.e. precisely Wald's Equation 6.3.14 (modolo a shift of the energy per unit rest mass by an inessential constant). Therefore, disregarding the neglected corrections, we find again the same geodesic equation for the motion of the planets as in Einsteinian general theory of relativity.

\subsection{Schwarzschild Solution}
\label{schwarzschild_solution}

After the weak-field limit, the next simplest kind of solution to the generalized post-Einsteinian field equations to seek would be those having spherical symmetry in empty space \cite{schwarzschild}. Here, we treat only the static geometry of space-time itself and do not, until {\S}\ref{chapter_7}, investigate the present theory's implications for the motion of small bodies in the background geometry via the jet geodesic equation. We are faced once again with essentially the same problem everyone will have seen as a graduate student, with the added complication that one has to keep track of multi-indices up to the second order and, as a result, another constant of integration enters into the solution. In the Cartan formalism (in the author's opinion), the calculation becomes particularly perspicuous, as Thirring demonstrates in vol. II, {\S}4.4, and we shall follow his lead.

\subsubsection{Resolving the Problem with Flatness of Euclidean Space in Radial Coordinates}

Now that we have at hand the elements of an exterior calculus at higher order, we are in a position to follow up on a subtle point left unaddressed in {\S}I.4.6, namely this: we would like to check explicitly that the generalized Riemannian curvature in Euclidean space remains zero under change of coordinate from Cartesian to radial, as it must if the theory is to be consistent. In the conventional differential geometry, this condition boils down to a straightforward calculation, most easily done in Cartan's formalism as we advance it in {\S}I.4.8 above. If, however, one proceeds na{\"i}vely from equations (I.4.61), (I.4.63) or (I.4.72), respectively, a problem will be encountered. For the Euclidean plane in polar coordinates, direct calculation yields for the affine jet connection 1-forms the following:
\begin{align}\label{connection_forms_in_radial_coord_unmodified}
	\omega^r_\theta &= -d^\theta \nonumber \\
	\omega^r_{r\theta} &= -d^{r\theta} \nonumber \\
	\omega^r_{\theta\theta} &= \frac{d^r}{r^2} - \left( 2 r + \frac{1}{r} \right) d^{\theta\theta} \nonumber \\
	\omega^\theta_{r\theta} &= \frac{2 d^r}{r^2} \nonumber \\
	\omega^\theta_{\theta\theta} &= \frac{d^\theta}{r} \nonumber \\
	\omega^{rr}_{\theta\theta} &= - 2 d^{\theta\theta} \nonumber \\
	\omega^{r\theta}_{\theta\theta} &= \frac{d^{r\theta}}{r},
\end{align}
(all others with $\alpha < \beta$ equal to zero). As we see, at second order some of these contain a radial dependence that has little hope of canceling between $\text{\th}\omega^\alpha_\beta$ and $\omega^\alpha_\gamma \wedge \omega^\gamma_\beta$ in order to yield curvature 2-forms $R^\alpha_\beta$ that vanish identically. In fact, they do not cancel (the resulting expressions are too unwieldy to reproduce here). The problem is even more severe when one goes from Cartesian to spherical coordinates in Euclidean 3-space. Must we conclude that equations (I.4.61), (I.4.63) or (I.4.72) are incorrect?

After due deliberation, the solution to this dilemma we should like to propose goes as follows: a satisfactory formalism that retains equations  (I,4,61), (I.4.63) or (I.4.72), in essence, can be found out if we are willing to extend our ring of functions to include infinitesimal elements. We have already contended that in order to arrive at the relevant algebra of observables, one should replace $C^\infty(M)$ with $\mathscr{A}^\infty(M)$---now this replacement, suggested in the first instance on theoretical grounds, begins to make a material difference. Henceforth, we shall regard the space of smooth differential forms $\Omega^*(M)$ as existing over the ground ring $\mathscr{A}^\infty(M)$ by tensoring all modules with $\mathscr{A}^\infty(M)$ over $C^\infty(M)$. To preserve its character as an exterior algebra, we have to define how the exterior derivative $\text{\th}$ is to be understood to act on infinitesimal elements in $\mathscr{A}^\infty(M)$. We require a notation in order to distinguish between a differential when it appears in $\mathscr{A}^\infty(M)$ and when it appears as a basis element of the 1-forms. To this end, we shall write $d^\alpha$ as usual for the former but $\underline{d^\alpha}$ for the latter. Now, every function $f \in \mathscr{A}^\infty(M)$ can be written with respect to a jet basis as a formal power series of the form,
\begin{equation}
	f = \sum_{\alpha \ge 0} f_\alpha d^\alpha,
\end{equation}
where we understand $d^{(0,\ldots,0)} = 1$. By linearity, it suffices to define the exterior derivative on expressions of the form $g d^\alpha$, $g \in C^\infty(M)$, for arbitrary multi-indices $\alpha \in \vvmathbb{N}_0^n$. Thus, let us take
\begin{align}
	\text{\th} ( g 1 ) &= g_{,\beta} \underline{d^\beta} \nonumber \\
	\text{\th} ( g d^\alpha ) &= g_{,\beta} d^\alpha \underline{d^\beta} + g \underline{d^\alpha}, \qquad \alpha > 0.
\end{align}
Compute
\begin{equation}
	\text{\th}^2 (g 1) = \text{\th} \left( g_{,\alpha} \underline{d^\alpha} \right) = g_{,\alpha\beta} \underline{d^\beta} \wedge \underline{d^\alpha} = 0, 
\end{equation}
as usual, but now also (for $\alpha>0$)
\begin{equation}
	\text{\th}^2 (g d^\alpha) = \text{\th} \left( g \underline{d^\alpha} + g_{,\beta} d^\alpha \underline{d^\beta} \right) = g_{,\beta} \underline{d^\beta} \wedge \underline{d^\alpha} + \left( g_{,\beta} \underline{d^\alpha} + g_{,\beta\gamma}d^\alpha \underline{d^\gamma} \right) \wedge \underline{d^\beta} = 0.
\end{equation}
Therefore, $\text{\th}^2=0$ as it should.

If we are willing to admit infinitesimal elements into the ground ring, it gives us a considerable degree of flexibility. In particular, we can eliminate the undesirable terms in equation (\ref{connection_forms_in_radial_coord_unmodified}) by means of a suitable modification. Let us be guided in this process by two goals, first, to employ our freedom in defining the co-frame basis so as to get rid of spatial dependence in the resulting affine connection 1-forms as far as possible, but second, to preserve meanwhile the usual structure in radial coordinates at the 1-jet level, up to an infinitesimal modification (so that in the next subsection, the field equations obtained via Schwarzschild's Ansatz will retain the same form in their leading terms).

The first thing we can do is to make the radial basis element an exact 1-form so that its exterior derivative will vanish automatically. To this end, define
\begin{equation}
F_r = r (1 - d^{\theta\theta}-\sin^2 \theta d^{\phi\phi});
\end{equation}
thence,
\begin{align}
e^r = &\text{\th} F_r \nonumber \\
= &(1 - d^{\theta\theta}-\sin^2 \theta d^{\phi\phi}) \underline{d^r} - 
(r + 2 r \cos^2 \theta d^{\phi\phi} + 2 r \sin^2 \theta d^{\phi\phi}) \underline{d^{\theta\theta}} - r \sin^2 \theta \underline{d^{\phi\phi}} - \nonumber \\
&\sin 2 \theta d^{\phi\phi} ( r \underline{d^\theta} + 2 \underline{d^{r\theta}} ).
\end{align}
Appropriate forms for the other co-frame elements can be determined by experimentation. We merely quote the result here:
\begin{align}\label{radial_coframe_basis_with_inf}
e^\theta = &F_r \underline{d^\theta} + 2 \underline{d^{r\theta}} - \left( (r - d^r) (\sin \theta \cos \theta - \cos 2 \theta d^\theta + 2 \sin 2 \theta d^{\theta\theta}) - \cos 2 \theta d^{r\theta} \right) \underline{d^{\phi\phi}} \nonumber \\
e^\phi = &F_r \left( \sin \theta  - 
\sin \theta \cos^2 \theta  d^{\phi\phi} \right) \underline{d^\phi} + 
2 \left( \sin \theta - \cos \theta d^\theta + 
\sin \theta d^{\theta\theta} \right) \underline{d^{r\phi}} + \nonumber \\
&2 \left( (r - d^r) ( 
\cos \theta + \sin \theta d^\theta + \cos \theta d^{\theta\theta}) +
\sin \theta d^{r\theta} \right) \underline{d^{\theta\phi}} \nonumber \\
e^{rr} = & \underline{d^{rr}} \nonumber \\
e^{r\theta} = &(r - d^r) \underline{d^{r\theta}} \nonumber \\
e^{r\phi} = & \left( (r - d^r) (\sin \theta - \cos \theta d^\theta + 
\sin \theta d^{\theta\theta}) - \cos \theta d^{r\theta} \right) \underline{d^{r\phi}} \nonumber \\
e^{\theta\theta} = & (r^2 - 2 r d^r - 2 d^{rr}) \underline{d^{\theta\theta}} \nonumber \\
e^{\theta\phi} = & \left( (r^2 - 2 r d^r - 2 d^{rr}) ( \sin \theta - 
\cos \theta d^\theta + \sin \theta d^{\theta\theta}) - 
2 r \cos \theta d^{r\theta} \right) \underline{d^{\theta\phi}} \nonumber \\
e^{\phi\phi} = & \left( (r^2 - 2 r d^r - 2 d^{rr}) ( \sin^2 \theta - 
2 \cos \theta \sin \theta d^\theta - 
2 \cos 2 \theta d^{\theta\theta} - 
2 (r - d^r) \sin 2 \theta d^{r\theta} \right) \underline{d^{\phi\phi}}.
\end{align}
As can easily be checked by inspection, equation (\ref{radial_coframe_basis_with_inf}) reduces to equation (I.4.72) upon modding out by the infinitesimal elements. The point of the additional terms is to ensure a simple form for the affine jet connection 1-forms, to be found upon taking the exterior derivative. In fact, we compute in place of equation (\ref{connection_forms_in_radial_coord_unmodified}) the non-zero 1-forms (up to inessential infinitesimal terms) to be just the following:
\begin{align}\label{connection_forms_in_radial_coord_modified}
\omega^r_\theta &= - \underline{d^\theta} \nonumber \\
\omega^r_\phi &= - \sin \theta (1 + d^{\theta\theta}) \underline{d^\phi} \nonumber \\
\omega^\theta_\phi &= - \cos \theta (1 + d^{\theta\theta}) \underline{d^\phi}. \end{align}
These definitions yield the non-zero exterior derivatives and wedge products (up to infinitesimal elements) as follows,
\begin{align}
\text{\th} \omega^r_\phi &= - \cos \theta \underline{d^\theta} \wedge \underline{d^\phi} \nonumber \\
\text{\th} \omega^\theta_\phi &= \sin \theta \underline{d^\theta} \wedge \underline{d^\phi} \nonumber \\
\omega^r_\theta \wedge \omega^\theta_\phi &= \cos \theta \underline{d^\theta} \wedge \underline{d^\phi} \nonumber \\
\omega^\theta_r \wedge \omega^r_\phi &= - \sin \theta \underline{d^\theta} \wedge \underline{d^\phi}.
\end{align}
Thus, the desired cancellation occurs and the curvature 2-forms vanish identically: $R^r_\theta = R^r_\phi = R^\theta_\phi = R^r_{rr} = R^r_{r\theta} = R^r_{r\phi} = R^r_{\theta\theta} = R^r_{\theta\phi} = R^r_{\phi\phi} = R^\theta_{rr} = R^\theta_{r\theta} = R^\theta_{r\phi} = R^\theta_{\theta\theta} = R^\theta_{\theta\phi} = R^\theta_{\phi\phi} = R^\phi_{rr} = R^\phi_{r\theta} = R^\phi_{r\phi} = R^\phi_{\theta\theta} = R^\phi_{\theta\phi} = R^\phi_{\phi\phi} = R^{rr}_{r\theta} = R^{rr}_{r\phi} = R^{rr}_{\theta\theta} = R^{rr}_{\theta\phi} = R^{rr}_{\phi\phi} = R^{r\theta}_{r\phi} = R^{r\theta}_{\theta\theta} = R^{r\theta}_{\theta\phi} = R^{r\theta}_{\phi\phi} = R^{r\phi}_{\theta\theta} = R^{r\phi}_{\theta\phi} = R^{r\phi}_{\phi\phi} = R^{\theta\theta}_{\theta\phi} = R^{\theta\theta}_{\phi\phi} = R^{\theta\phi}_{\phi\phi} = 0$ (cf. Thirring, \cite{thirring_vol_2}, {\S}4.4.32).

At this point, a subtlety that does not exactly leap out of the formulae bears mentioning. In order for the coefficients to work out nicely, we must adopt a convention as to the meaning of the sum in equation (I.4.87). Here, we regard the sum as being taken over all combinations of $r$-tuples in the $n$ variables at each successive order. If, however, the sum is to be regarded instead as being taken over unique multi-indices, a corresponding combinatorial factor has to be included. That is, one may sum over unique multi-indices if at the same time the partial derivative operator is defined to be,
\begin{equation}
\partial_\alpha := \frac{|\alpha|!}{\alpha!} \frac{\partial^{|\alpha|}}{\partial x_1^{\alpha_1} \cdots \partial x_n^{\alpha_n}}.
\end{equation}
One checks that either way $\text{\th}^2 = 0$.

\subsubsection{Schwarzschild Solution to Second Order in the Jets}

Following Thirring's method, we are prepared to derive the Schwarzschild solution to second order in post-Einsteinian general theory of relativity. We may presume that the spherical symmetry of the problem should lead to a static solution having spatial dependence only on the radial coordinate. Moreover, physical intuition suggests that there should be a resemblance to what happens in the 1-jet case, namely, that the curvature should arise only in the temporal and radial directions while the transverse directions (depending on latitude and longitude on the two-dimensional spheres at a given radius and time) should be unaffected. In view of equation (\ref{radial_coframe_basis_with_inf}) therefore, posit an Ansatz of the form (here, for the sake of simplicity we write out only the non-infinitesimal terms)
\begin{align}
	e^t &= e^a \underline{d^t} \nonumber \\
	e^r &= e^b \left( \underline{d^r} - \frac{1}{2} r \underline{d^{\theta\theta}} - \frac{1}{2} r \sin^2 \theta \underline{d^{\varphi\varphi}} \right) \nonumber \\
	e^\theta &= r \underline{d^\theta} + \underline{d^{r\theta}} - \frac{1}{2} r \sin \theta \cos \theta \underline{d^{\varphi\varphi}} \nonumber \\
	e^\varphi &= r \sin \theta \underline{d^\varphi} +  \sin \theta \underline{d^{r\varphi}} +  r \cos \theta \underline{d^{\theta\varphi}} \nonumber \\
	e^{tt} &= e^{2a_1} \underline{d^{tt}} \nonumber \\
	e^{tr} &= e^{a_1+b_1} \underline{d^{tr}} \nonumber \\
	e^{t\theta} &= r \underline{d^{t\theta}} \nonumber \\
	e^{t\varphi} &= r \sin \theta \underline{d^{t\varphi}} \nonumber \\
	e^{rr} &= e^{2b_1} \underline{d^{rr}} \nonumber \\
	e^{r\theta} &= r \underline{d^{r\theta}} \nonumber \\
	e^{r\varphi} &= r \sin \theta \underline{d^{r\varphi}} \nonumber \\
	e^{\theta\theta} &= r^2 \underline{d^{\theta\theta}} \nonumber \\
	e^{\theta\varphi} &= r^2 \sin \theta \underline{d^{\theta\varphi}} \nonumber \\
	e^{\varphi\varphi} &= r^2 \sin^2 \theta \underline{d^{\varphi\varphi}},
\end{align}
where $a$, $b$, $a_1$ and $b_1$ depend on radius only. One calculates from this Ansatz the following non-zero affine jet connection 1-forms:
\begin{align}
\omega^t_r &= e^{a-b} a^\prime \underline{d^t} \nonumber \\
\omega^r_\theta &= - e^{-b} \underline{d^\theta \nonumber} \\
\omega^r_\varphi &= - e^{-b} \sin \theta (1 + d^{\theta\theta}) \underline{d^\varphi} \nonumber \\
\omega^\theta_\varphi &= - \cos \theta (1 + d^{\theta\theta}) \underline{d^\varphi} \nonumber \\
\omega^r_{tt} &= -2 e^{2 a_1 - b} a_1^\prime \underline{d^{tt}} \nonumber \\
\omega^{tt}_{rr} &= 2 e^{2 a_1 - 2 b_1} (a_1^{\prime\prime} + 2 a_1^{\prime 2} ) \underline{d^{tt}} \nonumber \\
\omega^{tt}_{\theta\theta} &= \omega^{tt}_{\varphi\varphi} = \frac{2 e^{2 a_1} a_1^\prime}{r} \underline{d^{tt}} \nonumber \\
\omega^r_{tr} &= - e^{a_1 + b_1 - b} (a_1^\prime + b_1^\prime) \underline{d^{tr}} \nonumber \\
\omega^r_{rr} &= - 2 e^{2 b_1 - b} b_1^\prime \underline{d^{rr}} \nonumber \\
\omega^{tr}_{rr} &= e^{a_1 - b_1} \left( a_1^{\prime\prime} + b_1^{\prime\prime} + (a_1^\prime + b_1^\prime)^2 \right) \underline{d^{tr}} \nonumber \\
\omega^{tr}_{\theta\theta} &= \omega^{tr}_{\varphi\varphi} = \frac{e^{a_1 + b_1} (a_1^\prime + b_1^\prime)}{r} \underline{d^{tr}} \nonumber \\
\omega^{rr}_{\theta\theta} &= \omega^{rr}_{\varphi\varphi} = \frac{2 e^{2 b_1}b_1^\prime}{r} \underline{d^{rr}}.
\end{align}

If the curvature 2-forms be computed according to the above formula, in general, additional terms involving higher derivatives of $a$ and $b$ will supervene (compared to the formulae quoted by Thirring). For instance,
\begin{align}\label{schwarzschild_01}
	R^t_r = - &e^{a-b} (a^{\prime 2} -  
	a^\prime b^\prime + a^{\prime\prime}) \underline{d^t} \wedge \underline{d^r} - \nonumber \\ &e^{a-b} ( 
	a^{\prime 3} - 2 a^{\prime 2}b^\prime - 2 a^{\prime\prime}b^\prime + a^\prime b^{\prime 2} + 3 a^\prime a^{\prime\prime} - a^\prime b^{\prime\prime} + a^{\prime\prime\prime} ) \underline{d^t} \wedge \underline{d^{rr}}. 
\end{align}
The complete expressions for the curvature 2-forms in the 2-jet case are too unwieldly to write out in full, but one sees that as expected one gets a perturbation from the 1-jet result, if we substitute
\begin{equation}\label{schwarzschild_a_b}
	a = \frac{1}{2} \log \left( 1 - \frac{A_0}{r} \right) \qquad
	b = -\frac{1}{2} \log \left( 1 - \frac{B_0}{r} \right).
\end{equation}
Then equation (\ref{schwarzschild_01}) becomes,
\begin{equation}
	R^t_r = \frac{A_0}{r^3} \underline{d^t} \wedge \underline{d^r} + \frac{3(A_0-B_0)}{4 r^4} \underline{d^t} \wedge \underline{d^r} - \frac{3A_0}{r^4} \underline{d^t} \wedge \underline{d^{rr}}.
\end{equation}
Let us explain now how the field equations appear at this level in the jets. If one bear in mind the block-diagonal structure of the Hodge-*operator, it is apparent that the field equations for $|\alpha|=1$ will be unaffected, as far as their dependence on $K^\alpha_\beta$ goes but, of course, these latter quantities will have the second-order corrections built in; namely:
\begin{equation}\label{stress_energy_curvature}
	8 \pi \varkappa t_\alpha = - \sum_{\beta<\gamma, \beta \ne \alpha, \gamma \ne \alpha} K_{\beta\gamma} e^\alpha.
\end{equation}
Here, we write the diagonal part of the curvature as $K_{\beta\gamma}$, where $R_{\beta\gamma} = K_{\beta\gamma} e^\beta \wedge e^\gamma$ (no sum) plus any off-diagonal part, which we presume to be negligible as it first enters at subleading order (i.e., proportional to $1/r^4$). Consider now a source at rest and assume stress-energy forms going like
\begin{equation}
	t^\alpha = (\varrho e^t,0,0,0,\sigma e^{tt},0,0,0,- \sigma e^{rr},0,0,0,0,0).
\end{equation}
Equation (\ref{schwarzschild_a_b}) with $r_0 = 2GM$ (where $M=4\pi\int \varrho r^2 d^r$) solves the first four field equations in the 1-jets, as usual. Now, $\sigma$ yields another constant of integration that may appear once 2-jets are included in the picture. Let $r_1=2GN$ with $N=4\pi\int \sigma r^2 d^r$. If we adopt an Ansatz of the following kind for the functions $a_1$ and $b_1$:
\begin{equation}\label{schwarzschild_a1_b1}
	a_1 = \frac{1}{2} \log \left( 1 - \frac{A_1}{r} \right) \qquad
	b_1 = -\frac{1}{2} \log \left( 1 - \frac{B_1}{r} \right),
\end{equation}
then a suitable combination of the coefficients should balance on the left hand side the contribution to the 2-jet stress-energy forms on the right hand side arising from non-zero $N$, just as the usual Schwarzschildian functions $a$ and $b$ in equation (\ref{schwarzschild_a_b}) balance the contribution to the 1-jet stress-energy arising from non-zero $M$. In fact, the condition for the leading $1/r^3$ dependence on the left hand side of the field equations to cancel is just $A_0-3A_1-B_0+3B_1=0$. Thus, we could take $A_0=r_0$, $B_0=r_0$, $A_1=r_0+r_1$ and $B_1=r_0+r_1$ in order to recover the 1-jet result along with a perturbation corresponding to non-zero $r_1$. This finding suggests that a solution to the full problem exists and could be found out iteratively through imposition of cancellation term by term in a further Ansatz of the form,
\begin{align}\label{schwarzschild_a1_b1_a2_b2}
	a &= \frac{1}{2} \log \left( 1 - \frac{A_{01}}{r} - \frac{A_{02}}{r^2} - \cdots \right) \qquad
	b = -\frac{1}{2} \log \left( 1 - \frac{B_{01}}{r} - \frac{B_{02}}{r^2} - \cdots \right) \nonumber \\	
	a_1 &= \frac{1}{2} \log \left( 1 - \frac{A_{11}}{r} - \frac{A_{12}}{r^2} - \cdots \right) \qquad
	b_1 = -\frac{1}{2} \log \left( 1 - \frac{B_{11}}{r} - \frac{B_{12}}{r^2} - \cdots \right).
\end{align}
To do better than such an Ansatz, one would have to know the full exact solution for the curvature 2-forms. Evidently, an improved understanding of the Schwarzschild solution at second order would pose an excellent test of the consistency of the whole formalism of the present work.

\subsection{Cosmological Solutions}
\label{cosmological_model}

In keeping with the philosophy behind the present work, we wish merely to adumbrate a toy model as a simple illustration of the theory and to indicate one direction in which post-Einsteinian effects may supervene. Therefore, let us posit uniformity, homogeneity and isotropy and seek a solution of Friedmann-Robertson-Walker form (see \cite{friedmann}, \cite{robertson}, \cite{walker}) to $r=2$; cf. Thirring, vol. 2, {\S}4.4.

The Ansatz for the orthonormal frame to use in this case is as follows:
\begin{align}
	e^t &= \underline{d^t} \nonumber \\
	e^\mathbf{x} &= \frac{R}{1+Kr^2/4} \underline{d^\mathbf{x}} \nonumber \\
	e^{tt} &= \underline{d^{tt}} \nonumber \\
	e^{t\mathbf{x}} &=  \frac{R}{1+Kr^2/4} \underline{d^{t\mathbf{x}}} \nonumber \\
	e^{\mathbf{x}\mathbf{y}} &=  \frac{R^2}{(1+Kr^2/4)^2} \underline{d^\mathbf{xy}},
\end{align}
where $R$ is a function of $t$ only, $r=\sqrt{x^2+y^2+z^2}$ and $K$ is a constant parametrizing the spatial curvature. The point is that we may evaluate the curvature 2-forms in terms of $R$ and $K$, then substitute into the field equations with a suitable phenomenological Ansatz for the stress-energy forms. In view of the factorization property for the 1-jet solution (cf. Thirring, \cite{thirring_vol_2}, {\S}4.4, in particular, Equation 4.4.6), we must have a leading dependence of the $t_\alpha$ proportional to some function of time multiplied with $e^\alpha$ and any 2-jet corrections will enter at subleading order.

We shall not follow the problem through all the way to a solution of the cosmological model, but merely indicate the nature of how it changes due to the incorporation of higher order jets. The temporal-spatial curvature 2-forms turn out to be no different from the 1-jet case if we ignore the off-diagonal terms:
\begin{equation}
	R^t_\mathbf{x} = \frac{\ddot{R}}{R} e^t \wedge e_\mathbf{x}
\end{equation}
but besides $R^t_{tt} = 0$ we have additional ones of the form,
\begin{align} 
	R^t_{t\mathbf{x}} &= \frac{\ddot{R}}{R} e^t \wedge e_{t\mathbf{x}} \nonumber \\
	R^t_\mathbf{xy} &= 2 \frac{\dot{R}^2 + R \ddot{R}}{R^2} e^t \wedge e_\mathbf{xy}.
\end{align}
A novel complication enters, however, with the spatial-spatial curvature 2-forms. Due to the possibility of taking higher derivatives with respect to spatial coordinates, the resulting expressions no longer reduce to simple functions of $r$. The full outcome is too long to quote, but we show what one gets as an expansion in $K$ up to second order:
\begin{align}
	R^\mathbf{x}_\mathbf{y} &= \left( \frac{K+\dot{R}^2-\ddot{R}^2}{R^2} + K^2 \frac{r^2 \dot{R}^2-3}{4 R^4} \right) e^\mathbf{x} \wedge e_\mathbf{y} \nonumber \\
	R^\mathbf{x}_{tt} &= - \frac{R^{(\mathrm{iv})}}{R} e^\mathbf{x} \wedge e_{tt} \nonumber \\
	R^\mathbf{x}_{t\mathbf{y}} &= \left( \frac{\dot{R}^2-\ddot{R}^2}{R^2} + K \frac{R^2-4\dot{R}^2+4R\ddot{R}}{2R^4} + K^2 \frac{\frac{1}{2}r^2 \dot{R}^2+\frac{1}{2}(r^2-2x_i^2) R \ddot{R}-\frac{3}{4}-\frac{1}{8}r^2 R^2}{R^4} \right) e^\mathbf{x} \wedge e_{t\mathbf{y}} \nonumber \\
	R^\mathbf{x}_{\mathbf{y}\mathbf{z}} &= \left( 2 \frac{R\dot{R}^2-\dot{R}^2\ddot{R}-R\ddot{R}^2}{R^3} + \frac{K}{R^2} + K^2 \frac{4r^2 \dot{R}^2-\frac{1}{4}(r^2+2x_i^2)R^2-\frac{7}{2}}{R^4} \right) e^\mathbf{x} \wedge e_\mathbf{yz}
\end{align}
At this juncture, it would become too laborious to quote formulae for the rest of the curvature 2-forms since they are similar in nature to what has already been presented.

It is worth indicating what becomes of the field equations themselves in the Friedmann-Robertson-Walker model to second order (where we ignore as negligible any direct contributions from the 2-jet terms; i.e., we consider only their indirect effect through the perturbation they induce in the 1-jet terms\footnote{This manner of proceeding can be justified by reasoning as follows: in order to specify the model fully, one has to introduce a characteristic scale $\ell$ to govern the relative sizes of the infinitesimals at higher order. Then the $K_{\beta\gamma}$ in equation (\ref{stress_energy_curvature}) should be multiplied by a factor of $\left( \dfrac{\ell}{b} \right)^{|\beta|+|\gamma|-2}$, where $b$ is the fundamental unit of length. As long as $\ell \ll b$ the 2-jet terms will be suppressed by a small coefficient. A reasonable suggestion would be to take for $\ell$ the Planck length, $\ell_P$.}):
\begin{align}
	3 \frac{\dot{R}^2 + K - \ddot{R}^2}{R^2} - K^2 \frac{9-12 r^2 \dot{R}^2}{4R^4} &= 8 \pi \varkappa \varrho \nonumber \\
	- \frac{\dot{R}^2+K+2R\ddot{R}-2\ddot{R}^2}{R^2} + K^2 \frac{3-4 r^2 \dot{R}^2}{4R^4} &= 8 \pi \varkappa p.
\end{align}
Needless to say, dropping terms in $\ddot{R}^2$ and $K^2$ these formulae return to the usual 1-jet Friedmann-Robertson-Walker model (cf. Thirring \cite{thirring_vol_2}, {\S}4.4). 

We leave an investigation of the astrophysical significance of our toy model, as well as of allied questions such as the existence of Killing higher tangent vector fields, to the experts. The theory developed here naturally suggests a research program for investigating the acceleration of the rate of expansion of the universe, first observed in 1998 by the High-z Supernova Search Team and the Supernova Cosmology Project, \cite{reiss_et_al} resp. \cite{perlmutter_et_al}. Can the tension between measurements of the Hubble constant based on the flow of nearby galaxies in the local universe versus those based on the characteristics of the cosmic background radiation during the era of decoupling (at cosmological distances) thereby be resolved? For a recent review of the astronomical issues involved, consult Shah et al., \cite{shah_lemos_lahav}.

	\section{Novel Effects in Orbital Mechanics}\label{chapter_7}

The formalism of the general theory of relativity in the presence of higher infinitesimals set forth in {\S\ref{chapter_6} may be brought to bear on two astronomical contexts which offer us a chance to confront the present theory with observation: the Pioneer and flyby anomalies. Supposing them not to be spurious artifacts produced by systematic errors in the data reduction, they offer a clue into the nature of post-Einsteinian physics. In our interpretation, the reason behind the anomalous effect, in both cases, may be attributed to subtleties involved in passing from one coordinate system to another, which do not show up at linear order in the infinitesimals. Therefore, they are inexplicable on the basis of the general theory of relativity as currently construed, in the form in which Einstein left it. For spacecraft on an escape trajectory from the solar system, we find a small centripetal acceleration, reflecting the modified inertia and depending on one free cosmological parameter. For spacecraft on a hyperbolic orbit passing near a planet, an inertial frame dragging linear in velocity gives rise to a Coriolis force at second order, leading to the flyby anomaly. The semi-empirical prediction formula due to Anderson and coworkers is derived.

\subsection{Pioneer Anomaly}

Anderson et al. \cite{anderson_Pioneer} report a residual centripetal acceleration being experienced by the Pioneer spacecraft, currently on an escape trajectory from the solar system since its encounter with Jupiter in 1973, on the order of $8.5 \times 10^{-8} ~\mathrm{cm} ~\mathrm{s}^{-2}$. The effect is close to being constant in magnitude over distances from the sun ranging from 20 AU to 95 AU. The Pioneer anomaly has eluded explanation in terms of known physics.\footnote{Thus, one finds questionable two aspects of the analysis by Turyshev et al. \cite{turyshev}, who dismiss the anomaly as being due to radiation pressure from anisotropic thermal emission from the spacecraft's radioisotopic generator and heated electrical components. These authors account for only 80 percent of the reported anomalous acceleration in the first place and second, their model requires the effect to decline exponentially in time with a half-life of forty years, in contradiction with Anderson et al.'s finding that it is nearly constant. In any case, the result of this section showing that an anomalous centripetal acceleration ought to arise very naturally in post-Einsteinian theory demands a proper reanalysis to disentangle the contribution of new physics from that of anisotropic thermal radiation.} Hence, we wish to investigate what, if anything, the post-Einsteinian theory has to say about it. Our circumscribed purpose in this section is to apply the Schwarzschild solution obtained in {\S}\ref{schwarzschild_solution} to the problem of inertial motion in almost flat space, in order to check whether any corrections of possible relevance to the Pioneer anomaly might enter. As we shall see, it can be viewed as an implication of modified inertia in post-Einsteinian theory.

For the arclength of a world-line, or what amounts to the same thing, the proper time experienced by a spacecraft is to be computed by integration of the induced Riemannian volume form which now contains terms of higher order. As discussed above in {\S}\ref{lorentz_to_higher_order}, a different measurement procedure applies to bodies in unrestrained inertial motion than does to the rigid rods and stationary clocks by which we fix our reference frame. The jet geodesic equation can be expressed in terms of any coordinates we please. It is convenient to start out with the inertial frame with respect to which the center of mass of the solar system is at rest. The time coordinate $t$ measures time elapsed on stationary clocks located on earth (or, in principle, anywhere in the solar system). After setting up the equation of motion, it makes sense to go over to the spacecraft's proper time. Why? We do not have an observer stationed on the spacecraft who can note the position and time indicated on the stationary clocks, as he passes by them. Rather, what one measures is the Doppler shift of the radio signal from the spacecraft's antenna. Now, the increment in proper time between successive signals emitted from the spacecraft will be the same as what we record when they arrive at earth. Since we do have stationary clocks on earth to refer to and we know our position in the solar system in terms of the earth's orbital elements, we can work out what the relative velocity between us and the spacecraft is, and thence infer its component of acceleration along the line of sight as a function of time.

The upshot of our discussion is that we have first to formulate the jet geodesic equation with respect to ephemeris time $t$, then go over to the spacecraft's proper time $s$. The procedure is non-trivial due to the corrections at second order in the jets that enter now in post-Einsteinian general theory of relativity.

\subsubsection{Anomalous Centripetal Acceleration of Spacecraft on an Escape Trajectory}\label{anomalous_centripetal_derivation}

As can be seen from Figure 3 in \cite{anderson_Pioneer}, since its encounter with Jupiter on December 4, 1973, Pioneer 10 has been traveling on an escape trajectory from the solar system directed approximately radially outwards from the sun. To get a feeling for what happens, we reduce the problem to a single dimension and suppose that its core physics can be captured by treating the Pioneer spacecraft as if it were on a strictly radial outward trajectory, starting from its passage past Jupiter at 5.2 astronomical units from the sun. The relation between its initial outward velocity $\varv$ at radius $r$ and its asymptotic velocity $\varv_\infty$ may be found from the condition of conservation of energy:
\begin{equation}
	\frac{1}{2} m_0 \varv^2 - \frac{G M m_0}{r} = \frac{1}{2} m_0 \varv_\infty^2;
\end{equation}
whence,
\begin{equation}\label{velocity_versus_radius}
	\varv = \sqrt{\varv_\infty^2+\frac{2GM}{r}}.
\end{equation}
The time $t$ to reach radius $r$ may then be determined by integration:
\begin{equation}\label{time_to_radius}
	t - t_0 = \int_{r_0}^r \frac{dr}{ \sqrt{\varv_\infty^2+{2GM}/{r}}}.
\end{equation}
In principle, this relation may be inverted to obtain $r$ as a function of time (although the integral does not exist in closed analytic form). The proper time experienced by the spacecraft, however, is not $t$ (which we measure using clocks close to being at rest), but in the non-relativistic limit neglecting Lorentz factors
\begin{equation}
	s = \int \theta^t = \int (d^t + d^{tt} + \cdots) = e^t - 1 = t + \frac{1}{2}t^2 + \cdots
\end{equation}
whence (inserting the dimensional factors, where for convenience we denote the cosmological scale factor by $b$),
\begin{equation}
	\dot{s} = 1 + \frac{ct}{b} + \cdots; \qquad \frac{1}{\dot{s}} = 1 - \frac{ct}{b} + \cdots
\end{equation}
For the remainder of the calculation, we presume the time scale involved to be short compared to $b/c$ so that we may retain only the linear term in the exponential decay of $1/\dot{s}$. From equation (I.2.8) (if it is not obvious), we may write
\begin{equation}
	\partial_t = \dot{s} \partial_s = (1 + t) \partial_s.
\end{equation}
As shown above in {\S}I.3, in the non-relativistic regime in the weak-field limit the jet geodesic equation reduces to Newton's law, which we may state as follows:
\begin{equation}
	\partial_t \partial_t x^\mu = - \Gamma^\mu_{00}.
\end{equation}
Therefore, upon change of time coordinate the left hand side reads,
\begin{equation}
	\partial_t (1+t) \partial_s x^\mu = (1+t) \partial^2_s x^\mu + \partial_s x^\mu. 
\end{equation}
For the sake of notational simplicity, write the spacecraft's velocity as a function of radial distance from the sun as $\varv$ and its acceleration as $a$. To leading order, we obtain
\begin{equation}
	a = \partial_s^2 x^\mu = - \frac{c\varv}{b} - \frac{GM_\odot}{r^2(1+ct/b)}.
\end{equation}
Therefore, we have for the anomalous acceleration of the spacecraft coming from the the post-Einsteinian modification of inertia the leading dependence,
\begin{equation}\label{anomalous_accel_full}
	a_1 = - \frac{c\varv}{b} + \frac{G M_\odot ct}{b r^2}.
\end{equation}
The anomalous acceleration of equation (\ref{anomalous_accel_full}) will always be centripetal (since for a spacecraft on an escape trajectory its total speed will be monotonically decreasing and therefore will satisfy 
\begin{equation}
	\frac{t}{r} = \frac{t}{r_0 + \int_0^t \varv dt} < \frac{t}{r_0+\varv t} < \frac{1}{\varv};
\end{equation}
moreover it must be greater than the velocity corresponding to a circular orbit at the given radius). At long enough times, however, by which epoch the spacecraft will nearly have reached its asymptotic velocity $\varv_\infty = 12.2 ~\mathrm{km/s}$, we can substitute the approximation $t=r/\varv_\infty$ and the anomalous acceleration it experiences goes over into the limiting form
\begin{equation}\label{anomalous_accel}
	a_1 = - \frac{c\varv_\infty}{b} + \frac{G M_\odot c}{b r \varv_\infty},
\end{equation}
displaying a dependence on radius as the inverse first power. Let us enter now into a quantitative discussion and comparison with observations. 

The agreement thus obtained with the Caltech group's findings is remarkable, if imperfect. The derivation of equation (\ref{anomalous_accel}) involves a somewhat drastic simplification of the actual situation. A proper analysis would have to solve the relevant jet geodesic equation, which we are not close to be being able to do at the moment. Since the time to reach radius $r$ from equation (\ref{time_to_radius}) will be materially less than the approximation employed, $r/\varv_\infty$ that is, the initial decline of the anomalous acceleration will be slower than formula (\ref{anomalous_accel}) suggests and accordingly it may be expected that a more accurate solution will fit more closely with observations. 

A somewhat more realistic model would be to solve for $\varv$ and $t$ as functions of $r$ from equations (\ref{velocity_versus_radius}) and (\ref{time_to_radius}). A slight subtlety crops up here so let us indicate how we intend to resolve it. Equation (\ref{anomalous_accel_full}), as it stands, involves an inconsistency. From a physical point of view, it should not matter when one chooses to be the origin of time. The underlying problem here is that $d^s$ does not have compact support (cf. remark I.5.2). A proper treatment of the initial conditions in the jet geodesic equation should lead to a smoothing out of the dip to zero and rapid rise of the second term in the anomalous acceleration at the starting point of the spacecraft's escape trajectory. Since we have not as yet developed sufficiently proper technique for solution of the jet geodesic equation from arbitrary initial conditions, let us resort to a simple stratagem: treat the origin of time $t_0$ as a free parameter and replace $t$ by $t-t_0$ in all formulae. To justify the procedure, consider that after its encounter with Jupiter the spacecraft is nearly in free motion, for which proposition I.3.4 characterizes the solution. In analogy with what is done when defining the M{\o}ller transformations in classical scattering theory (cf. Thirring, \cite{thirring_vol_1}, {\S}3.4), the trajectory of the spacecraft at late times behaves \textit{as if} it started on its inertial motion in free space at time $t_0$, even though this may be supposed to have taken place before its passing Jupiter. 

In figure \ref{pioneer_anomaly_figure}, which is to be compared with Figure 7 in \cite{anderson_Pioneer}, we plot the result of numerical integration for the time to radius $r$ starting from Jupiter at 5.2 astronomical units (assuming $t_0$ to occur half a year before), showing near quantitative agreement and as expected a slower than $1/r$ fall off with radius. Here, to render the result quantitative, the fundamental unit of length $b$ is taken to be as determined in {\S}\ref{determination_of_fundamental_length} below. It is evident that the numerical model of the anomalous acceleration effect is still rather too simplified. In particular, equation (\ref{anomalous_accel_full}) suggests that the dominant component of the anomalous acceleration at late times (though still small compared to $b/c = 1.47 \times 10^{13} ~\mathrm{s}~ = 464,000 ~\mathrm{year}$) must be directed opposite to its forward velocity and not necessarily towards the sun. If one examines Figure 3 in \cite{anderson_Pioneer}, one readily checks that the base of the triangle extends outwards to approximately $7.5 ~\mathrm{AU}$ for Pioneer 10 when it is at distances greater than about $20 ~\mathrm{AU}$ and to approximately $15 ~\mathrm{AU}$ for Pioneer 11 when it is at distances greater than about $30 ~\mathrm{AU}$. Thus, despite everything our simplified model constitutes a reasonable approximation, in that the cosine of the angle between minus its velocity and the radius vector pointing to the sun will range from 0.9 to 0.95, so that any difference will be well within experimental error.

\begin{figure}\label{plot_of_anomalous_accel}
	\includegraphics[width=10cm]{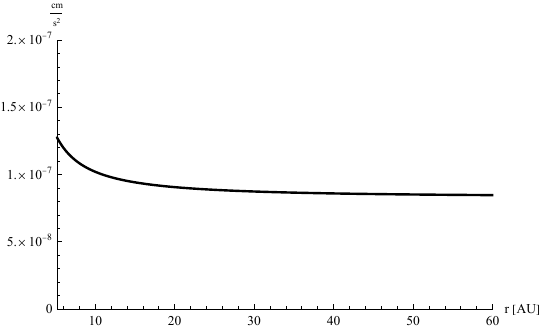}
	\caption{Anomalous acceleration on Pioneer 10 according to equation (\ref{anomalous_accel_full}) (based on the approximate value of the fundamental unit of length to be derived in {\S}\ref{determination_of_fundamental_length}).}
	\label{pioneer_anomaly_figure}
\end{figure}

\begin{remark}
	To the author's knowledge, the empirical grounding of the Pioneer anomaly has never been adequately clarified, due to confirmation bias among those who are convinced that there could be no component of physics beyond the standard model involved and that therefore ascribe all anomalous acceleration to pressure of thermal radiation emitted by the spacecraft's radioisotopic generator. Early work of the Caltech group in which the potential anomaly is identified has not been satisfactorily refuted. Now that a specific analytical form of the former, coming from the jet geodesic equation in the Schwarzschild solution, is available, the possibility of a direct comparative study between the contributions due to novel inertial effects and to radiation pressure exists. 
\end{remark}

\subsubsection{Determination of the Fundamental Unit of Length}\label{determination_of_fundamental_length}

From dimensional considerations alone, one is free to complement the relativistic system of units (in which $c=G=1$) with a unit of length rather than of action, as is usually done. Setting the Caltech group's latest estimate of $a_P = (8.74 \pm 1.33) \times 10^{-8} ~\mathrm{cm} ~\mathrm{s}^{-2}$ (see \cite{anderson_Pioneer}) to $a_1$ in equation (\ref{anomalous_accel}) and remembering that one wants to use the mass $M_\odot$ for the sun, we obtain the following value for the fundamental unit of length at a nominal radius of $30$ astronomical units (where the data are best):
\begin{equation}
	b = 4.40 \times 10^{23} ~\mathrm{cm} = 142 ~\mathrm{kiloparsec}
\end{equation}
(where $G = 6.67408 \times 10^{-8} ~\mathrm{cm}^3 ~\mathrm{g}^{-1} ~\mathrm{s}^{-2}$, $M_\odot = 1.9891 \times 10^{33} ~\mathrm{g}$, $c = 2.99792458 \times 10^{10} ~\mathrm{cm} ~\mathrm{s}^{-1}$, $1 ~\mathrm{AU} = 1.49597892 \times 10^{13} ~\mathrm{cm}$ and $1 ~\mathrm{parsec} = 3.08567802 \times 10^{18} ~\mathrm{cm}$). Needless to say, the estimate is very approximate only but in principle one could derive the exact solution to the jet geodesic equation and fit it to the data to arrive at a better value for $b$.

\subsubsection{Modified Newtonian Phenomenology}\label{MOND}

As we saw in the previous subsection, the Pioneer anomaly appears to involve a post-Einsteinian transition from the Newtonian law of universal gravitation in the environs of the solar system, with its familiar $1/r^2$ dependence on radius, to a novel force law that holds far away and displays a $1/r$ dependence on radius (but which decays to zero at very long times). Something like this behavior is a feature of Milgrom's modified Newtonian dynamics, proposed to explain the problem of apparent missing mass in galaxies and galaxy clusters \cite{milgrom}. Milgrom's idea is that massive bodies moving under the influence of gravitational attraction from distant masses in a regime of very low accelerations bear a modified inertial response to the impressed force, such that a greater acceleration results than would be the case with the ordinary Newtonian second law (in the non-relativistic limit which is applicable to all the astrophysical contexts studied so far). Milgrom's phenomenological Ansatz is by design quite successful in reproducing the flat rotation curves observed in spiral galaxies and is supported by heuristic semi-empirical reasoning, though not by any underlying theory. For a review of the extensive work that has been done on modified Newtonian dynamics since its inception forty years ago, see \cite{mond_review}.

Now, we have just found what appears to be a similar modification to inertia in the very low-acceleration regime. Indeed, the near coincidence between the anomalous acceleration of the Pioneer spacecraft and Milgrom's surmise for $a_0$ is suggestive. Moreover, our result flows from a proposed general theoretical framework of great naturality and is fully covariant, in principle. Nevertheless, there are two salient differences from Milgrom's modified Newtonian mechanics: first, our force law in the low-acceleration regime of interstellar space has a coefficient linear in $GM$ while Milgrom's goes as $\sqrt{GM}$; and second, we have derived our result only in the case of a nearly radial hyperbolic orbit away from a central point mass. In the scenario of modified Newtonian dynamics, one ordinarily presumes it to apply to all orbital motion in the non-relativistic regime, whether bounded or unbounded. In the astrophysical setting of galaxies and clusters of galaxies, observational data are available only for stars resp. galaxies presumably on bound orbits, of course.

Therefore, considerable hard work remains to investigate the possible connection between our post-Einsteinian theory and the modified Newtonian framework. Fortunately, there is for us an obvious starting point when treating bound orbits, namely, the jet geodesic equation once again. Remember, as well, that what has to be recovered are the astronomical data for rotation curves of spiral galaxies and velocity dispersion in galaxy clusters, not necessarily Milgrom's specific proposal for the form the modification of inertia takes, per se.

\subsection{Flyby Anomaly}\label{flyby}

Another deviation from Newtonian dynamics is more decisive. The flyby anomaly consists in a small differential between ingoing and outgoing asymptotic velocities of a spacecraft transiting near to the earth, apparently due to the earth’s rotation \cite{anderson_flyby}. There have been no plausible proposals for how to explain it until now. Nonetheless, J.D. Anderson and his Caltech group \cite{anderson_flyby} have studied its systematics and arrived at the following semi-empirical prediction formula:
\begin{equation}\label{flyby_prediction_formula}
	\frac{\Delta\varv_\infty}{\varv_\infty} = \frac{2 \omega_\oplus R_\oplus}{c} \left( \cos \delta_0 - \cos \delta_1 \right),
\end{equation}
where $\omega_\oplus$ is the rotational velocity of the earth and $R_\oplus$ its radius. Here, $\delta_0$ designates the incoming declination and $\delta_1$ the outgoing declination. 

The flyby anomaly arises as a natural consequence of covariance as applied to the generalized metric tensor in a rotating frame---as such, no contrived hypotheses are required to explain it. For, as Newton stresses in the \textit{Principia}, there are strong reasons to suppose the rotation of a material body relative to an external inertial frame to bear with it observable physical consequences (the spinning bucket and coupled globes thought-experiments). The rotation can be canceled out, of course, by going to an appropriate non-inertial co-rotating frame and the problem of nearby orbital motion solved as if the larger body were stationary. If one then transfer back to the original inertial frame, one expects a departure from non-rotating behavior there which may in general be attributed to so-called Coriolis forces. In the case of the known general theory of relativity, precisely this procedure gives rise to the Lense-Thirring effect, which one is accustomed to describe as being due to inertial frame dragging \cite{einstein_formale_grundlage}, \cite{lense_thirring}.

Now, the transformation to the rotating frame is non-linear and, in particular, involves non-vanishing second derivatives. In view of the more complicated transformation law that applies to generalized tensors, it is natural then to wonder whether the additional terms appearing in the post-Einsteinian theory can manifest themselves as a physically observable effect. Indeed, we shall see in a moment that this eventuality proves to be the case. What is of greatest moment, moreover, is that, contrary to what happens in the Einsteinian case, where terms linear in the rotational velocity appearing as components in the metric tensor enter into observable phenomena only after combining together (causing the effect to be quadratic in the rotational velocity and thus so small as almost to escape detection), in the post-Einsteinian case it is possible for them to enter linearly into the jet geodesic equation. Therefore, we may speak of a hitherto-unknown inertial frame dragging linear in velocity. If the effect were to be only quadratic in velocity, it would remain much too tiny to measure.

\subsubsection{Derivation of the Anomalous Velocity Differential}

There are two issues that demand attention: first, how to set up the coordinate transformation to the co-rotating frame and second, to investigate the repercussions the coordinate transformation so described will have upon the form of the Minkowskian metric tensor in flat space. The first may readily be settled. Let us adopt cylindrical coordinates with the azimuth directed along the $z$-axis. The primary massive body in question here is the earth, so the the $x$- and $y$-axes will span the equatorial plane and the azimuth will coincide with the poles. In the earth's interior (that is, at radii $r<R_\oplus = 6371 ~\mathrm{km}$), the rotational velocity may be well approximated as constant, equal to $\omega_\oplus = 7.292115 \times 10^{-5} ~\mathrm{rad/s}$. In the earth's exterior ($r>R_\oplus$) we are free to choose any spatial dependence of the rotational velocity we please, subject to the following desiderata: the rotational velocity ought to decline to zero towards spatial infinity, and furthermore for the functional form to make sense it has to be constant around the perimeter of any circle lying in a plane parallel to the equator and centered on the azimuthal axis. Therefore, we may adopt a functional form purely for the sake of calculational convenience, understanding that at the end of the day one is always going to transform back to the non-rotating inertial frame established by the fixed stars. The following functional form will prove most expeditious:
\begin{equation}\label{rot_vel_fnctl_form}
	\omega = \begin{cases}
		\omega_\oplus & \mathrm{if}~ r<R_\oplus \\
		\omega_\oplus R_\oplus/r & \mathrm{if}~ r>R_\oplus. \\
	\end{cases}
\end{equation}
\begin{remark}
	Let us remark on the oddity that equation (\ref{rot_vel_fnctl_form}) implies a translational velocity relative to the fixed stars constant in magnitude as one proceeds toward spatial infinity within any plane parallel to the equatorial plane. But the problem is immaterial as far as we are concerned. We may merely suppose, implicitly, a faster fall-off with radius to set in at radii sufficiently far from the earth that any frame dragging effect there becomes negligible.
\end{remark}

Let now $x^{0,\ldots,3} = t, x, y, z$ denote Cartesian coordinates in the sidereal frame and $x^{\prime 0,\ldots,3} = t^\prime, x^\prime, y^\prime, z^\prime$ Cartesian coordinates in the co-rotating frame. They are related as follows:
\begin{equation}
	\begin{matrix}
		t^\prime = t & t = t^\prime \\
		x^\prime = x \cos \omega t - y \sin \omega t & x = x^\prime \cos \omega t^\prime + y^\prime \sin \omega t^\prime \\
		y^\prime = x \sin \omega t + y \cos \omega t & y = -x^\prime \sin \omega t^\prime + y^\prime \cos \omega t^\prime \\
		z^\prime = z & z = z^\prime \\
	\end{matrix}.
\end{equation}
In order to apply our formalism to the Minkowskian metric in co-rotating frame, let us consider first the case of an arbitrary change of orthonormal basis. To start with, we have with respect to the orthonormal co-frame basis $\varepsilon^{1,\ldots,n}$,
\begin{equation}
	\hat{\eta} = \sum_{|\gamma|=1}^\infty \sum_{\alpha,\beta\ge\gamma} \frac{|\alpha|!}{\alpha!} \frac{|\beta|!}{\beta!} (-1)^{\beta_0}\varepsilon^\alpha \otimes \varepsilon^\beta,
\end{equation}
cf. equation (\ref{generalized_minkowski_metric}). Now, in the cylindrical coordinates adapted to the problem the change of basis may simply be expressed as,
\begin{equation}
	\varepsilon^{t^\prime} = \varepsilon^t; \qquad \varepsilon^{\rho^\prime} = \varepsilon^\rho; \qquad \varepsilon^{\delta^\prime} = \varepsilon^\delta; ~ \mathrm{and}~ \qquad \varepsilon^{\phi^\prime} = \varepsilon^{\phi-\omega t},
\end{equation}
where $t$ is the time coordinate, $\rho$ the radial coordinate, $\delta$ the declination and $\phi$ the azimuth. Expanding up to first order in the rotational velocity should then leave $\varepsilon^{t^\prime}$, $\varepsilon^{\rho^\prime}$ and $\varepsilon^{\delta^\prime}$ unchanged and for $\varepsilon^{\phi^\prime}$ yield
\begin{equation}
	\varepsilon^{\phi^\prime} = \varepsilon^{\phi} - \omega_\oplus R_\oplus \cos \delta \varepsilon^{t^\prime}.
\end{equation}
Thus,
\begin{equation}\label{minkowski_metric_in_rotating_frame}
	\hat{\eta}^\prime = \hat{\eta} - \omega_\oplus R_\oplus \cos \delta \varepsilon^{t} \otimes \varepsilon^{\phi} +
	\omega_\oplus R_\oplus \cos \delta \varepsilon^t \otimes \varepsilon^{tt} + \cdots.
\end{equation}
Here, the perturbation in $\varepsilon^t \otimes \varepsilon^{tt}$ arises from $\varepsilon^{t^\prime} \otimes \varepsilon^{t^\prime\phi^\prime}$ resp. $\varepsilon^{\phi^\prime} \otimes \varepsilon^{t^\prime t^\prime}$ and the ellipsis comprehends all other such terms. Correspondingly, the Schwarzschild metric in the co-rotating frame is given by,
\begin{align}\label{schwarzschild_metric_in_rotating_frame}
	g^\prime = g - &
	\omega_\oplus R_\oplus \cos \delta d^t \otimes d^{\phi}
	\sqrt{1 - \frac{2GM_\oplus}{r}} + \nonumber \\
	&
	\omega_\oplus R_\oplus \cos \delta d^t \otimes d^{tt} \left( 1 - \frac{2GM_\oplus}{r} \right)
	+ \cdots.
\end{align}

The import of the modified form of the metric in the co-rotating frame is the following: if we examine the expression in equation (I.4.13) for the Christoffel symbols of the Levi-Civita jet connection in terms of the components of the metric tensor, we notice that its zero components $\Gamma^\mu_{0,0}$ resp. $\Gamma^\mu_{0,00}$ may be written as,
\begin{align}\label{christoffel_comparison}
	\Gamma^\mu_{0,0} &= \frac{1}{2} g^{\mu\lambda} \left( \partial_0 g_{0,\lambda} + \partial_{0} g_{\lambda 0} - \partial_\lambda g_{0,0} \right) \\
	\Gamma^\mu_{0,00} &= \frac{1}{2} g^{\mu\lambda} \left( \partial_0 g_{00,\lambda} + \partial_{00} g_{\lambda 0} - \partial_\lambda g_{0,00} \right).
\end{align}	
where we have suppressed cross-terms in view of their negligibility in the weak-field limit such as will hold near the earth. The other Christoffel symbols experiencing a non-zero perturbation, such as $\Gamma^\mu_{00,\nu}$ and $\Gamma^\mu_{0\nu,0}$, will be unimportant in the non-relativistic limit since they contract against components of the 2-velocity that are down by a factor of $\varv/c$. Let us further suppose a static space-time geometry (by which we shall mean simply that the components of the metric tensor have no explicit dependence on the time coordinate) and observe again that, in the weak-field limit, the factor of $g^{-1}$ in front may for all practical purposes be replaced with $\eta^{-1}$. Lastly, since for the time being we are interested in the dynamics applicable to spacecraft flying by earth at non-relativistic speeds, the dominant contribution to equation (\ref{christoffel_comparison}) will come from the $00$ respectively $00,0$ components; that is to say, we are left with
\begin{align}\label{christoffel_reduced}
	\Gamma^\mu_{0,0} &= - \frac{1}{2} \mathrm{grad}~ g_{00} \\
	\Gamma^\mu_{0,00} &= - \frac{1}{2} \mathrm{grad}~ g_{0,00}.
\end{align}
Thus, the formulae are precisely parallel and we may interpret this result as saying that the $g_{0,00}$ component of the metric tensor serves as another gravitational potential entering into the jet geodesic equation. For again, in the non-relativistic limit, the spacecraft's velocity vector to second order will be given to leading order simply by $\partial_0+2\partial_{00}$, up to terms of order $\varv/c$ (where $\varv$ denotes the ordinary 3-component translational velocity). Remembering now the jet geodesic equation $\nabla_X X=0$ written out in components:
\begin{equation}
	X^0 \dot{X}^\mu = - \Gamma^\mu_{0,0} X^0 X^0 - \Gamma^\mu_{0,00} X^0 X^{00} - \Gamma^\mu_{00,0} X^{00} X^0,
\end{equation}
we find (substituting $X^\mu=\gamma (1,\varv_1,\varv_2,\varv_3)$ with $\gamma := 1/\sqrt{1-\varv^2}$) we find the dominant contribution to the first-order part of the velocity to be (including a factor of two arising from $\Gamma^\mu_{00,0}$ respectively $\Gamma^\mu_{0,00}$)
\begin{equation}\label{nonrel_equ_of_motion}
	\dot{X}^k = \frac{1}{2} \gamma \mathrm{grad}_k g_{00} + 2 \gamma^2 \mathrm{grad}_k g_{0,00} \qquad (k=1,2,3).
\end{equation}
The first term on the right-hand side is just the force coming from the familiar Newtonian gravitational potential $g_{00} = - \left( 1 - 2GM/r \right)$ while the second term represents a novel force inherited from the 00-component of the velocity. Here, we have taken care to retain the Lorentz factors $\gamma$ respectively $\gamma^2$ (the exponent to the power of two in the latter arises from the fact that in the non-relativistic limit $X^{00} = 2 \gamma^2$).

Let us proceed to the derivation of the deviation term in the jet geodesic equation originating from the rotation of the earth. Here (going over now into spherical coordinates) the first-order part of the metric is given in the usual notation by
\begin{align}
	g &= - \left(1-2GM_\oplus/r \right) dt^2 + \left( 1 + 2GM_\oplus/r \right) dr^2 + r^2 d\delta^2 + r^2 \cos^2 \delta d\phi^2 \\
	g^{-1} &= - \left(1-2GM_\oplus/r \right)^{-1} \partial_t^2 + \left( 1 + 2GM_\oplus/r \right)^{-1} \partial_r^2 + r^2 \partial_\delta^2 + r^2 \cos^2 \delta \partial_\phi^2 \\ 
	&= - \left(1+2GM_\oplus/r \right) \partial_t^2 + \left( 1 - 2GM_\oplus/r \right) \partial_r^2 + r^2 \partial_\delta^2 + r^2 \cos^2 \delta \partial_\phi^2,
\end{align}
neglecting in the last line terms of order $(GM_\oplus/r)^2$. For the orthonormal frame in spherical coordinates we have
\begin{align}
	\hat{e}_r &= \left( 1 - \frac{2GM_\oplus}{r} \right) \partial_r \\
	\hat{e}_\delta &= \frac{1}{r} \partial_\delta \\
	\hat{e}_\phi &= \frac{1}{r \cos \delta} \partial_\phi
\end{align}
and the gradient in these coordinates is given by
\begin{equation}
	\mathrm{grad} = \hat{e}_r \frac{\partial}{\partial r} + \hat{e}_\delta \frac{1}{r} \frac{\partial}{\partial\delta} + \hat{e}_\phi \frac{1}{r \cos \delta} \frac{\partial}{\partial\phi}.
\end{equation}
We want to take for the additional potential, as above,
\begin{equation}
	g_{0,00} = - \omega_\oplus R_\oplus \cos \delta \left( 1 - \frac{2GM_\oplus}{r} \right).
\end{equation}
Therefore,
\begin{align}
	2 ~\mathrm{grad}~ g_{0,00} &= \frac{2\omega_\oplus R_\oplus \sin \delta}{r} \hat{e}_\delta - \frac{4\omega_\oplus R_\oplus}{r^2} GM_\oplus \sin \delta ~\hat{e}_\delta -  \frac{4\omega_\oplus R_\oplus}{r^2} GM_\oplus \cos \delta ~\hat{e}_r \nonumber \\
	&= - 2\omega_\oplus R_\oplus \left[ \frac{2GM_\oplus}{r^2} \cos \delta ~\hat{e}_r -
	\left( \frac{1}{r} - \frac{2GM_\oplus}{r^2} \right) \sin \delta ~\hat{e}_\delta \right].
\end{align}
But we must bear in mind that this would yield the force to be experienced by a spacecraft at rest. To obtain the force on a spacecraft moving at velocity $\varv \ll c$ we must make two corrections: first, multiply by $\gamma^2=1+\varv^2$ to leading order leading to,
\begin{equation}\label{force_in_proper_time}
	F =  - 2\omega_\oplus R_\oplus \left[ \frac{2GM_\oplus}{r^2} \cos \delta ~\hat{e}_r -
	\left( \frac{1+\varv^2}{r} - \frac{2GM_\oplus}{r^2} \right) \sin \delta ~\hat{e}_\delta \right],
\end{equation}
where of course we may drop as negligible terms proportional to $GM_\oplus \varv^2/rc^2$. The second correction is of the essence here: we must take into account the modification to the local standard of time introduced by the fact that the spacecraft is not moving in free space but in outer space near to a massive body. As Einstein discovered, this modification leads to the so-called gravitational red-shift for spectral lines emanating from elements in stellar atmospheres. Here, the same effect causes a small but crucial shift in the spacecraft's measure of proper time: namely, we now have $ds = \left( 1 - 2GM_\oplus/r \right) dt$, whence
\begin{equation}
	\frac{d}{ds} = \frac{1}{1-2GM_\oplus/r} \frac{d}{dt}.
\end{equation}
Remember now that the although we are in principle free to write the jet geodesic equation in arbitrary coordinates, from a physical point of view it is appropriate to employ the moving body's own proper time when computing the rate of change of quantities it experiences in its co-moving local frame; viz. $ds$ rather than in terms of the time as measured by stationary clocks at spatial infinity; viz., $dt$. Keep in mind here that, in the relevant contribution, the left-hand side involves $d^2/dt^2$ of the coordinate functions whereas the right-hand side involves, instead, a factor of $d^2/dt^2 d/dt$ applied to the Christoffel symbol that connects the $\partial_{00}$ component of the velocity with the $\partial_0$ component. When we convert from $d/dt$ to $d/ds$, then, we bring in a factor of $(1-2GM_\oplus/r)^{2}$ on the left-hand side which balances against a factor of $(1-2GM_\oplus/r)^{3}$ on the right-hand side to produce, net, a factor of $(1-2GM_\oplus/r)$ on the right-hand side.

The upshot of the above rather subtle discussion is that we are to multiply the force in equation (\ref{force_in_proper_time}) by a factor of $(1-2GM_\oplus/r)$ and retain only leading terms; hence, after the correction for proper time the force law reads,
\begin{equation}
	F = - 2\omega_\oplus R_\oplus \left[ \frac{2GM_\oplus}{r^2} \cos \delta ~\hat{e}_r -
	\left( \frac{1+\varv^2}{r} - \frac{4GM_\oplus}{r^2} \right) \sin \delta ~\hat{e}_\delta \right],
\end{equation}

We turn now to the inference from the derived non-relativistic equation of motion to the flyby anomaly, i.e., derivation of the differential between outgoing and ingoing asymptotic velocities, making appeal to the arguments found in unpublished work by W. Hasse, E. Birsin and P. H{\"a}hnel, On force-field models of the spacecraft flyby anomaly, arXiv:0903.0109v1 [gr-qc] (February 28, 2009). First, in {\S}2 of their paper, they show that the velocity-independent term proportional to $1/r$ makes no contribution to the flyby anomaly, so we may drop this term. Second, in {\S}3 they prove that one is free to add any force term of the form
\begin{equation}
	F_w = - 2\omega_\oplus R_\oplus \left[ w \frac{2GM_\oplus}{r^2} \cos \delta ~\hat{e}_r + w \frac{2GM_\oplus}{r^2} \sin \delta ~\hat{e}_\delta \right]
\end{equation}
without affecting the final result for the flyby anomaly, where $w$ is an arbitrary real number. Thus, if we set the parameter $w$ to $-2$ we obtain the force law that generates the flyby anomaly in the simple functional form,
\begin{equation}\label{final_force_law}
	F = 2\omega_\oplus R_\oplus \left[ \frac{2GM_\oplus}{r^2} \cos \delta ~\hat{e}_r +
	\frac{\varv^2}{r} \sin \delta ~\hat{e}_\delta \right].
\end{equation}
As Hasse et al. argue, a force law of this kind (note, our equation (\ref{final_force_law}) reproduces Hasse's et al.'s solution $F_0$ in their notation, Equation 3.24 in the cited paper) will in fact lead upon a perturbational calculation precisely to 
Anderson et al.'s semi-empirical prediction formula for the flyby anomaly \cite{anderson_flyby}, to wit:
\begin{equation}\label{anderson_prediction_formula}
	\frac{\Delta\varv_\infty}{\varv_\infty} = 2 \omega_\oplus R_\oplus \left( \cos \delta_0 - \cos \delta_1 \right),
\end{equation}
for the differential $\Delta\varv_\infty$ between the outgoing asymptotic velocity $\varv_1$ and the incoming asymptotic velocity $\varv_0$. Here, $\delta_{0,1}$ designate the declination of the planes in which the incoming respectively outgoing asymptotic velocities vectors lie (i.e., passing through the spacecraft and the center of mass of the earth). 

\begin{proposition}
The force law in equation (\ref{final_force_law}) leads to the flyby anomaly in the analytical form of equation (\ref{anderson_prediction_formula}).
\end{proposition}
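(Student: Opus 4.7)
The plan is to treat the flyby as a small perturbation of the unperturbed hyperbolic Keplerian orbit of the spacecraft about the Earth, and to compute $\Delta\varv_\infty$ by integrating the work done by the perturbing force (\ref{final_force_law}) along that orbit. Introducing the specific orbital energy $E = \frac{1}{2}\varv^2 - GM_\oplus/r$---constant along the unperturbed motion and equal to $\frac{1}{2}\varv_\infty^2$ at infinity---I would use the standard relation $\dot{E} = F\cdot\varv$ together with asymptotic freedom at $t\to\pm\infty$ to write
\begin{equation*}
\varv_\infty\,\Delta\varv_\infty \;=\; \Delta E \;=\; \int_{-\infty}^{+\infty} F\cdot\varv\,dt
\end{equation*}
to leading order in the small dimensionless parameter $\omega_\oplus R_\oplus/c$, with the asymptotic declinations $\delta_0,\delta_1$ read off at $t=\mp\infty$ from the incoming and outgoing velocity vectors.

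Next I would evaluate $F\cdot\varv$ in the orthonormal spherical frame adapted to the Earth's rotation axis, in which the velocity components are $\varv_r = \dot r$, $\varv_\delta = r\dot\delta$, $\varv_\phi = r\cos\delta\,\dot\phi$, and in which (\ref{final_force_law}) has no azimuthal component. Substituting directly and using the vis-viva relation $\varv^2 = \varv_\infty^2 + 2GM_\oplus/r$ valid on the unperturbed Keplerian orbit to expand the angular contribution, I expect the integrand to collapse to the exact total time derivative
\begin{equation*}
F\cdot\varv \;=\; -\,2\omega_\oplus R_\oplus\,\frac{d}{dt}\!\left[\left(\varv_\infty^2 + \frac{2GM_\oplus}{r}\right)\cos\delta\right].
\end{equation*}
The cancellation driving this identity is that the radial contribution $(2GM_\oplus/r^2)\cos\delta\,\dot r$ together with the $(2GM_\oplus/r)\sin\delta\,\dot\delta$ piece coming from the $2GM_\oplus/r$ term in the expansion of $\varv^2$ reproduces exactly $-\frac{d}{dt}\bigl(\tfrac{2GM_\oplus}{r}\cos\delta\bigr)$, while the leftover $\varv_\infty^2\sin\delta\,\dot\delta$ equals $-\varv_\infty^2\,\tfrac{d}{dt}\cos\delta$. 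The integral then reduces to boundary terms alone; since $2GM_\oplus/r\to 0$ at both termini, only the $\varv_\infty^2\cos\delta$ piece survives, giving $\varv_\infty\,\Delta\varv_\infty = 2\omega_\oplus R_\oplus\,\varv_\infty^2(\cos\delta_0 - \cos\delta_1)$ and hence (\ref{anderson_prediction_formula}) upon dividing through.

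The principal obstacle is precisely this total-derivative collapse, which is what makes the answer depend solely on the asymptotic geometry of the trajectory rather than on its detailed course through the neighborhood of the Earth. For a generic admixture of the radial $\cos\delta$ and tangential $\varv^2\sin\delta$ force terms no such cancellation would occur, and one would instead be driven into an explicit quadrature along the hyperbolic orbit parametrized by an eccentric-anomaly-type variable, yielding unwieldy expressions heavily dependent on the osculating orbital elements. The Hasse--Birsin--H{\"a}hnel gauge-freedom argument recalled above, which pins down the value $w=-2$ in their family, is precisely what selects the representative in the equivalence class of admissible force laws for which the integrand turns out to be exact. A subsidiary and essentially routine point to verify is the self-consistency of integrating the perturbation along the unperturbed path; this is standard, since the back-reaction correction is of order $(\omega_\oplus R_\oplus/c)^2 \sim 10^{-12}$ and thus well below the precision to which (\ref{anderson_prediction_formula}) is confronted with observation.
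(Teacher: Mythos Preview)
Your proposal is correct and follows essentially the same route as the paper: compute $\Delta E=\int F\cdot\varv\,dt$ along the unperturbed Kepler hyperbola, invoke energy conservation ($\varv^2-2GM_\oplus/r=2E=\varv_\infty^2$), and reduce the integrand to a total time derivative whose only surviving boundary contribution is $\varv_\infty^2(\cos\delta_0-\cos\delta_1)$. The sole cosmetic difference is that the paper first adds the null term $2GM_\oplus\nabla(\cos\delta/r)\cdot\varv$ (which integrates to zero) to strip off the radial component before recognising the remainder as $-2E\,\tfrac{d}{dt}\cos\delta$, whereas you substitute vis-viva directly and package the whole integrand as $-\tfrac{d}{dt}\bigl[(\varv_\infty^2+2GM_\oplus/r)\cos\delta\bigr]$; the two organisations are equivalent.
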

\begin{proof}
We recapitulate here the main steps in the derivation, originally due to Hasse et al. (op. cit.). First of all, note that the velocity differential $\Delta \varv_\infty$ may be written in terms of the spacecraft's internal energy per unit mass as
\begin{equation}\label{velocity_differential}
\frac{\Delta \varv_\infty}{\varv_\infty} = \frac{\Delta E}{2E},
\end{equation}
where $E = \dfrac{1}{2}\varv^2 - \dfrac{GM_\oplus}{r}$. To a leading approximation, the change in internal energy may be obtained by integrating the perturbing force over the entire trajectory; thus,
\begin{align}
\Delta E &= \int_{-\infty}^{\infty} \mathbf{F} \cdot \mathbf{v} dt \nonumber \\
&= 2 \omega_\oplus R_\oplus  \int_{-\infty}^{\infty} \left[ \frac{2GM_\oplus}{r^2} \cos \delta ~\hat{e}_r + \frac{\varv^2}{r} \sin \delta ~\hat{e}_\delta \right] \cdot \mathbf{v} dt \nonumber \\
&= 2 \omega_\oplus R_\oplus \int_{-\infty}^{\infty} \left[ \frac{\varv^2}{r} - \frac{2GM_\oplus}{r^2} \right] \sin \delta ~\hat{e}_\delta \cdot \mathbf{v} dt \nonumber \\
&= - 2\omega_\oplus R_\oplus \int_{-\infty}^{\infty} \left[ \varv^2 - \frac{2GM_\oplus}{r} \right] \nabla \cos \delta \cdot \mathbf{v} dt \nonumber \\
&= - 2 \omega_\oplus R_\oplus \int_{-\infty}^{\infty} \left[ \varv^2 - \frac{2GM_\oplus}{r} \right] \frac{d \cos \delta}{dt} dt \nonumber \\
& = 4 \omega_\oplus R_\oplus E \left( \cos \delta_0 - \cos \delta_1 \right).
\end{align}
To go from the second to third lines, use the fact that we are free to add any term proportional to 
\begin{equation}
\nabla \dfrac{\cos \delta}{r} = - \dfrac{\cos \delta}{r^2} \hat{e}_r - \dfrac{\sin \delta}{r^2} \hat{e}_\delta,
\end{equation}
as its integral will not contribute to $\Delta E$. To go from the penultimate to the last line, note that $E$ may be taken outside the integral since it is approximately conserved (neglecting the small perturbation). In view of equation (\ref{velocity_differential}) we obtain the semi-empirical prediction formula of equation (\ref{anderson_prediction_formula}).
\end{proof}

\subsubsection{Experimental Status of the Flyby Anomaly}

For an experimental test and its adequacy, see the paper by Anderson et al., \cite{anderson_flyby}, whose data are reproduced in Table 9.1. For flybys of the earth conducted as of the time of writing, satisfactory agreement is found (some other more recent flybys have been inauspicious for observing the anomaly since they occur with declinations $\delta_0$ and $\delta_1$ such as to produce a small effect in any case). Some of the figures for the precision quoted by these authors seem unrealistically small and would lead to absurdly high $\chi^2$. Therefore, we have adopted the expedient of averaging to arrive at $\bar{\sigma}_{\Delta \varv_\infty} = 0.39$, from which we find a $\chi^2$ per d.o.f. of 1.11 and a p-value of 0.019. All things considered, in view of the difficulty of performing an experiment on relativistic effects, one is entitled to presume the flyby anomaly to be real and Anderson's prediction formula to embody it (na{\"i}vely the observed and predicted $\Delta \varv_\infty$ have a correlation coefficient of 0.9983, which could hardly be due to chance alone). In particular, it is noteworthy that the observed $\Delta \varv_\infty$ displays little apparent connection with the altitude of the spacecraft at perigee (correlation coefficient of just $-0.177$).

\begin{table}[h!]
	\begin{center}
		\footnotesize	
		\caption{Anomalous differential between outgoing and ingoing asymptotic velocities for flybys of the earth, data reproduced from \cite{anderson_flyby}.}
		\begin{tabular}{c c d{4.0} d{2.3} d{3.2} d{3.2} d{2.2} d{1.2} d{2.2} d{1.2}}
			\noalign{\vskip 3mm}
			\hline\hline
			\noalign{\vskip 1mm}
			Date & Spacecraft &	\multicolumn{1}{c}{Altitude} & \multicolumn{1}{c}{$\varv_\infty$} & \multicolumn{1}{c}{$\delta_0$} & \multicolumn{1}{c}{$\delta_1$} & \multicolumn{1}{c}{Observed} & \multicolumn{1}{c}{$\sigma_{\Delta \varv_\infty}$} & \multicolumn{1}{c}{Predicted} & \multicolumn{1}{c}{$\chi^2$} \cr
			& & \multicolumn{1}{c}{(km)} & \multicolumn{1}{c}{(km/s)} & \multicolumn{1}{c}{(${}^\circ$)} & \multicolumn{1}{c}{(${}^\circ$)} & \multicolumn{1}{c}{$\Delta \varv_\infty$ (mm/s)} & \multicolumn{1}{c}{(mm/s)} & \multicolumn{1}{c}{$\Delta \varv_\infty$ (mm/s)} & \cr
			\noalign{\vskip 1mm}
			\hline
			\noalign{\vskip 2mm}
			12/08/90 & GLL-I & 960 & 8.949 & -12.52	& -34.15 & 3.92	& 0.3 &	4.12	& 0.27 \cr
			12/08/92 & GLL-II & 303 & 8.877 & -34.26 & -4.87 & -4.6	& 1	& -4.67 &	0.04 \cr
			01/23/98 & NEAR & 539 & 6.851 & -20.76 & -71.96	& 13.46	& 0.01 & 13.28 & 0.21 \cr
			08/18/99 & Cassini & 1175 & 16.01 & -12.92 & -4.99 & -2 & 1 & -1.07 & 5.66 \cr
			03/04/05 & Rosetta & 1956 & 3.863 & -2.81 & -34.29 & 1.8 & 0.03 & 2.07 & 0.46 \cr
			08/02/05 & M’GER & 2347 & 4.056 & 31.44 & -31.92 & 0.02	& 0.01 & 0.06 & 0.01 \cr
			\noalign{\vskip 2mm}
			\hline\hline
		\end{tabular}	
	\end{center}	
\end{table}

\subsubsection{Another Interpretation of the Physics behind the Flyby Anomaly}

It also bears pointing out that despite the indirect involvement of gravitation through the magnitude of the $g_{0,0\phi}$ components of the metric tensor, aetiologically the flyby anomaly is primarily kinematical in origin. Another interpretation of the effect would be to construe the flyby anomaly as a contribution to redshift. To determine the local standard of time, one has to take into account not just the gravitational modification to $g_{00}$ but also the distance traveled by a stationary clock in the co-moving frame, as can be computed by direct integration:
\begin{proposition}
	The arclength covered by stationary clocks in the co-moving frame is given up to second order by $\lambda = t + \frac{1}{2}t^2 - \omega_\oplus R_\oplus t^2 \cos \delta$.
\end{proposition}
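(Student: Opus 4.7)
The plan is to mimic the calculation of $L_1 = e^\lambda - 1 = \lambda + \tfrac{1}{2}\lambda^2 + \cdots$ carried out in \S\ref{lorentz_to_higher_order} for a stationary clock in flat Minkowski space, but now using the perturbed metric $\hat{\eta}^\prime$ of equation (\ref{minkowski_metric_in_rotating_frame}) in the co-rotating frame. First I would model the stationary clock by the worldline $\iota: t \mapsto (t, \rho_0, \delta_0, \phi_0)$ in the rotating coordinates and identify its generalized velocity jet as $X = \partial_{t^\prime} + 2 \partial_{t^\prime t^\prime}$, consistent with the convention established below equation (\ref{dust_current}) for a body locally at rest.

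Second, I would pull back $\hat{\eta}^\prime$ to the one-dimensional worldline. The perturbation comprises two pieces: the off-diagonal cross-term $-\omega_\oplus R_\oplus \cos \delta\, \varepsilon^{t} \otimes \varepsilon^{\phi}$, which vanishes on the worldline because the stationary clock has no azimuthal motion ($\varepsilon^\phi(X)=0$), and the surviving piece $+\omega_\oplus R_\oplus \cos \delta\, \varepsilon^{t} \otimes \varepsilon^{tt}$, which contributes non-trivially since both $\varepsilon^t(X) = 1$ and $\varepsilon^{tt}(X) = 2$ are non-zero. Together with the unperturbed background $\hat{\eta}$, this fixes the pullback up to second order in the infinitesimals.

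Third, I would apply the measure $\mu$ of \S I.5.1 to the interval $[0, t]$, evaluated with the pulled-back metric. In the unperturbed case, the computation reproduces exactly the flat-space formula $\mu[0,t] = e^t - 1 = t + \tfrac{1}{2}t^2 + \cdots$ recounted in \S\ref{lorentz_to_higher_order}, accounting for the first two terms in the claim. The surviving perturbation adds a contribution linear in $\omega_\oplus R_\oplus \cos\delta$ to the integrand at order $t$, which upon integration yields the correction $-\omega_\oplus R_\oplus t^2 \cos\delta$ advertised in the statement. The overall minus sign is inherited from the Lorentzian signature, precisely as the base $-\varepsilon^t\otimes\varepsilon^t$ piece of $\hat{\eta}$ is responsible for the positivity of $L_1$ on timelike worldlines.

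The main obstacle is the combinatorial bookkeeping for the higher-order jet products: specifically (i) the implicit symmetrization between $\varepsilon^t \otimes \varepsilon^{tt}$ and its partner $\varepsilon^{tt} \otimes \varepsilon^t$, (ii) the factor of $2$ in the generalized velocity that was already required to reconcile the $tt$-components of the stress-energy tensor in the discussion below equation (\ref{dust_current}), and (iii) the correct sign and weight conventions for the generalized measure $\mu$ when the off-diagonal $(1,2)$-sector of the metric is non-zero. Once these factors are tracked consistently, the closed-form expression $\lambda = t + \tfrac{1}{2}t^2 - \omega_\oplus R_\oplus t^2 \cos \delta$ follows by direct integration without further technical difficulty.
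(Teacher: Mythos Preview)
Your approach is valid but differs from the paper's. You work in the co-rotating frame, where the clock is stationary, and pull back the perturbed metric $\hat{\eta}'$ of equation~(\ref{minkowski_metric_in_rotating_frame}); the correction then enters through the surviving $\varepsilon^t \otimes \varepsilon^{tt}$ piece of the metric. The paper instead stays in the sidereal frame with the \emph{flat} Minkowski metric and treats the clock as moving: it writes the spatial increment as $d^s = r\cos\delta\,d^\phi = \omega_\oplus R_\oplus \cos\delta\,d^t$, forms $d^\lambda = d^t + d^s$ and hence $d^{\lambda\lambda} = d^{tt} + d^{ss} + 2\,d^{ts}$, and then observes that the mixed temporal--spatial component $d^{ts}$ in the 22-sector carries a relative minus sign in the Minkowski signature, so the integrand becomes $d^t + (1 - 2\omega_\oplus R_\oplus \cos\delta)\,d^{tt}$.

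The two routes are dual under the change of frame and must agree by covariance. The paper's argument is more elementary in that it never invokes the derived form of $\hat{\eta}'$ and needs only the signature structure of flat $\hat\eta$; it also makes transparent why the coefficient is exactly $2\omega_\oplus R_\oplus\cos\delta$ (from the cross-term $2\,d^{ts}$) and why the $(\omega_\oplus R_\oplus)^2$ piece (coming from $d^{ss}$) is dropped. Your route is more systematic and closer in spirit to the flyby derivation itself, but it inherits all the bookkeeping ambiguities you flag in your last paragraph---symmetrization, the factor of $2$ in $X^{00}$, and the sign/weight conventions for $\mu$---whereas the paper's computation sidesteps them entirely by working with the unperturbed metric.
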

\begin{proof}
	The first correction term will be of second order in $\omega_\oplus R_\oplus$ and may be neglected, as is conventional in the general theory of relativity. It is easiest to write the second correction term as proportional to distance traveled, which at radius $r$ and declination $\delta$ will be just $d^s = r \cos \delta d^\phi = \omega r \cos \delta d^t = \omega_\oplus R_\oplus \cos \delta d^t$. 
	
	Clearly with $d^\lambda = d^t + d^s$ we pick up another term in $d^{\lambda\lambda} = d^{tt} + d^{ss} + 2 d^{ts}$. In the metric tensor, for which we may employ the Minkowskian form since any term in the Schwarzschild metric in $GM_\oplus/r$ will be vanishingly small when multiplied together with $\omega_\oplus R_\oplus$, the temporal-spatial components in the 22-sector have a relative minus sign. Thus, the integrand for arclength will become $d^t+\left( 1 - 2 \omega_\oplus R_\oplus \cos \delta \right) d^{tt}$ in place of the usual $d^t + d^{tt}$, yielding the desired result upon integration.
\end{proof}
If we go from the primed coordinate system back to the unprimed, the sign of the second correction term changes. The difference in $d^{tt}$ must be entered into the metric tensor, and thereby affects $g_{0\phi,0}$ as indicated above. Of course to get the spacecraft's proper time one would have to multiply by its Lorentzian factor. The second correction term is very small since to non-dimensionalize we should multiply $t^2$ by a factor of $c^2/b^2$ and the fundamental length is presumably astronomically large compared to terrestrial or solar system scales; it would not have been detected if it were not for the availability of an extremely precise standard of time.

\begin{remark}
A correct prediction of the flyby anomaly indirectly confirms our principle of plenitude in much the same way as Kuhn and Thomas' sum rule \cite{kuhn}, \cite{thomas} provides an experimental test of Jordan's commutator relations \cite{born_jordan} between the position and momentum q-numbers in non-relativistic quantum mechanics.
\end{remark}

	\section{Discussion}
	
The rightness of our paradigm is confirmed by its success in predicting new phenomena in orbital mechanics. The proposed resolution of the Pioneer and flyby anomalies indicates that the precise tracking of spacecaft in the solar system can serve as a window through which to peer into the world of infinitesimals. The result calls into question whether space-time can adequately be modeled on a differentiable manifold as this term is ordinarily understood in twentieth-century mathematics. Modern mathematicians are conversant with far more versatile abstractions of the notion of space allowing for singularities to be handled unproblematically, such as are encountered in algebraic geometry (varieties, possibly non-reduced schemes, stacks etc.) and elsewhere, not least in the Russian school of differential geometry (orbifolds, diffeologies, $C^{\infty}$-ringed spaces etc.). The present contribution should be seen in light of this ongoing trend in the recent mathematical literature to introduce enlarged concepts of space, the distinctive properties of which are fitted to expand the range of applicability of spatial intuition. What is salient about it is twofold: first, unlike what is the case in algebraic geometry as normally pursued, one works in the context of smooth functions so that unlimited differentiability holds and therefore one can avail oneself of the qualitative continuity properties that most naturally accord with spatial intuition; and second, through the postulation of a Riemannian metric tensor in a sense generalized to include infinitesimals of higher than first order, it makes the powerful geometrical concepts of orthogonality and the inner product between two vectors available to the more general setting and renders the problem susceptible to analysis through quantitative means. 

If it were not for these two things, no matter how vivid one's faculty of imagination may be in visualizing space, it would have led to no issue and one could not have hit upon the jet connection or the generalized Riemannian curvature, which, as we have seen, are what unlock innovative ideas in fundamental physics, which have received here but the merest sketch and which promise far-reaching advances in the future. Clearly, without an intuitively appealing (`anschauliches') picture of what a higher-order tangent vector is---which depends crucially on the assumed smoothness, of course---, one could not have gotten anywhere.

Hence, there is a strong subsequent case to be made for the proposition that infinitesimals and higher-order tangents ought to play a role in the foundations of geometry and, so one would expect, of mechanics as well. What is novel in the point of view advanced herein is that these infinitesimals are to be conceived of as really existing in nature. How are we to judge a statement such as this? In scheme theory, infinitesimals serve as a formal device by means of which to describe certain limiting situations and, absolutely speaking, would not be needed except for the sake of convenience. From the physicist's standpoint, the limiting situation in question is one where the typical distances between bodies are small in comparison to the cosmological length scale. The theory lends itself to an expansion in powers of this ratio, where the leading terms must dominate. As we have seen in {\S}\ref{chapter_7}, however, sufficiently sensitive observations may disclose effects at subleading order. The mooted idealization, therefore, suggests an analytical procedure to study the problem of motion of bodies order-by-order in the infinitesimals. In Part III, we shall turn to the implications of a formal procedure of this kind for a reconceptualization of the nature of the fundamental forces themselves.

\end{document}